\newtheorem{theorem}{Theorem}[section]
\newtheorem{proposition}[theorem]{Proposition}
\newtheorem{corollary}[theorem]{Corollary}
\newtheorem{lemma}[theorem]{Lemma}
\theoremstyle{definition}
\newtheorem{definition}[theorem]{Definition}
\newtheorem{example}[theorem]{Example}
\newtheorem{remark}[theorem]{Remark}
\theoremstyle{remark}
\DeclareMathOperator{\B}{B}
\DeclareMathOperator{\spec}{Spec}
\DeclareMathOperator{\Spec}{\mathbf{Spec}} 
\DeclareMathOperator{\Vect}{Vect}
\DeclareMathOperator{\Aut}{Aut}
\DeclareMathOperator{\BAut}{\mathbf{Aut}}
\DeclareMathOperator{\Ob}{Ob}
\DeclareMathOperator{\Par}{Par}
\DeclareMathOperator{\EFPar}{EFPar}
\DeclareMathOperator{\Hom}{Hom}
\DeclareMathOperator{\BHom}{\mathbf{Hom}}
\DeclareMathOperator{\BIsom}{\mathbf{Isom}}
\DeclareMathOperator{\Sym}{\mathbf{Sym}} 
\DeclareMathOperator{\id}{id}
\DeclareMathOperator{\Rep}{Rep}
\DeclareMathOperator{\Ind}{Ind} 
\DeclareMathOperator{\Z}{Z} 
\DeclareMathOperator{\pr}{pr}
\DeclareMathOperator{\Gal}{Gal}
\DeclareMathOperator{\h}{H} 
\DeclareMathOperator{\R}{R} 
\newcommand{\thickslash}{\mathbin{\!\!\fatslash}}
\begin{document}

\title[Tamely ramified torsors and parabolic bundles]{Tamely ramified torsors and 
parabolic bundles}

\author[I. Biswas]{Indranil Biswas}

\address{School of Mathematics, Tata Institute of Fundamental
Research, Homi Bhabha Road, Mumbai 400005, India}

\email{indranil@math.tifr.res.in}

\author[N. Borne]{Niels Borne}

\address{Universit\'e Lille 1, Cit\'e scientifique
U.M.R. CNRS 8524, U.F.R. de Math\'ema\-tiques
59 655 Villeneuve d'Ascq C\'edex, France}

\email{Niels.Borne@math.univ-lille1.fr}

\subjclass[2010]{14F35, 14D23, 14J60.}

\keywords{Tannakian category, parabolic bundle, stack of roots, tamely ramified torsor.}

\begin{abstract}

Given a variety $X$, a normal crossings divisor $D\subset X$, and a finite abelian group scheme 
$G$, we relate, in the case of abelian monodromy, the following two:
\begin{itemize}
\item existence of a $G$-torsor with prescribed ramification, and

\item existence of essentially finite parabolic vector bundles with prescribed weights.
\end{itemize}
\end{abstract}

\maketitle

\section{Introduction}

\subsection{Tamely ramified covers and parabolic bundles}
\label{sub:tamely_ramified_covers_and_parabolic_bundles}

A classical construction, due to A. Weil, associates to a Galois \'etale cover $Y\longrightarrow X$ 
of algebraic varieties over a field $k$, and a representation $V$ of $G\,=\,\Gal(Y/X)$, 
a vector bundle $\mathcal E_V$ on $X$. These bundles are finite, in the sense that 
each of these admits a non-trivial tensor relation \cite{We}. In fact, the family
$\mathcal E_V$, 
when $V$ varies in all representations, allows us to reconstruct the covering $Y\longrightarrow X$.

This was later extended by M. Nori to $G$-torsors, where $G$ is a finite group 
scheme over $k$. In modern terms, the data of such a torsor is equivalent to the 
data of a tensor exact functor $\Rep_k(G) \longrightarrow \Vect(X)$.

Another way to generalize Weil's construction is to consider a Galois cover $Y\longrightarrow X$ 
which is tamely ramified along a normal crossings divisor $D$, in the sense of 
Grothendieck--Murre. Then it is necessary to endow vector bundles with a parabolic 
structure and the upshot is that such a covering defines a tensor functor $\Rep_k(G) 
\longrightarrow \Par(X,D)$ from representations of the Galois group to vector bundles with a 
parabolic structure along $D$ (see \cite[Proposition 3.2.2 and Th\'eor\`eme 
2.4.7]{bor:rep}).

\subsection{Tamely ramified torsors}
\label{sub:tamely_ramified_torsors}

The aim of this article is to define a notion of ``tamely ramified $G$-torsor'' with 
encapsulates both situations, and, mostly, to give a criterion for the existence of such 
a torsor, when the ramification data is prescribed.

Our very natural definition is inspired by the characterization of tamely ramified 
cover deduced from Abhyankar's lemma : a tamely ramified torsor is a $G$-invariant 
cover that is \emph{fppf} locally on the base induced by a Kummer cover, along a 
group monomorphism $\mathbb \mu_r\longrightarrow G$. This is consistent with the philosophy and 
the general definition given in \cite{CEPT:tame}. Setting $G\,=\,\mathbb \mu_r$, one 
recovers classical objects for instance uniform cyclic covers in 
\cite{arsvist:uniform}; the case of
$G\,=\,\mathbb \mu_r$ was extensively used as the ``cyclic cover trick'' (see for 
instance \cite{esnvieh:vanishing} for an application to Hodge theory).

\subsection{Main result}
\label{sub:main_result}

Our principal result is a partial answer to the following question : If we assign to each 
irreducible component $D_i$ of $D$ a multiplicity $r_i$, does there exists a tamely 
ramified $G$ cover $Y\longrightarrow X$ with ramification index $r_i$ at $D_i$ ? It is easy to 
work out a necessary condition : if such a cover exists, it can be used to 
produce essentially finite parabolic vector bundles with weights any rational number 
of denominator $r_i$. Our main result, Theorem \ref{crit_exist}, states essentially 
that, when $G$ is abelian, this necessary condition is also sufficient. More precisely, if any 
admissible weight occurs as the weight of an essentially finite parabolic vector 
bundle with abelian monodromy, then there exists a ramified $G$-torsor with the 
exact prescribed ramification.

The main point of the proof consists of introducing a certain algebraic stack called 
stack of roots, build out of the ramification data. The principal difficulty, worked 
out in \S~\ref{sec:stack_int}, is to relate ramified $G$-torsors and (usual) 
$G$-torsors on the stack of roots. Although it looks obvious from the definitions,  this is by no means a trivial issue : as David Rydh informed us, the key Proposition \ref{prop:alper_crit_torsors_local} turns out to be false if we don't assume $G$ to be abelian. We include his counter-examples in Appendix \ref{sec:David_Rydh_counter_example}. So this hypothesis in Theorem \ref{crit_exist} seems sharp. This restriction on the structure group also appears in the closely related work 
\cite{marques:slices}.

Once this is done, the question of the existence of a tamely ramified cover translates 
into the question of Nori-uniformization of the stack of roots; the latter can be solved 
thanks to the Tannakian characterization given in our previous work 
\cite{bisbor:fundamental_gerbe}. In the background, we also rely on the 
interpretation of parabolic vector bundles as ordinary vector bundles on the stack 
of roots (\cite{bis:par,bor:corr}).

While considering the issue of Nori-uniformization, we are naturally led to use 
Nori's fundamental group, or rather Nori's fundamental gerbe (\cite{bv:nori_gerbe}). 
In the Appendix \ref{sec:abelian_objects_in_tannakian_categories}, we give the 
following Tannakian interpretation of gerbes with abelian monodromy, which might be 
of independent interest : Each object has a Tannakian monodromy coming from the base 
-- we call these objects as \emph{basic}.

\subsection{Notations and conventions}

We work over an arbitrary field $k$ and denote by $S=\spec k$ the base scheme. Fix a 
locally noetherian scheme $X/S$ and a finite family $\mathbf D \,=\, (D_i)_{i\in I}$ of 
reduced, irreducible, distinct Cartier divisors on $X$. We add to our data a family 
${\mathbf r} \,=\, (r_i)_{i\in I}$ of positive integers. We say that $D=\bigcup_{i\in I} 
D_i$ is a simple (or strictly) normal crossings divisor $D$ on $X$ if $X$ is regular 
at each point of $D$ and $D$ is locally defined by a subsystem of a regular system 
of parameters (see \cite[Definition 1.8.2]{gm:tame}). We don't assume this is the 
case from the start.

If $\mathcal X$ is an algebraic stack, we will denote by $\left|\mathcal X\right|$ the associated topological space and by $\left|\mathcal X\right|_0$ the set of closed points.

\section{Torsors with prescribed ramification}

\subsection{Tamely ramified torsors}

Given a closed point $x$ of $X$, we denote by $I_x$ the set of $i$ in $I$ such that $x$ belongs to the support of $D_i$. We also set $(\mathbf r_x)_i$ as $r_i$ if $i\in I_x$ and $1$ otherwise; this defines a multi-index $\mathbf r_x$.

\begin{definition}\label{def:Kum-com}
Let $x$ be a closed point of $X$, and let $U\,=\,\spec R$ be an open affine neighbourhood.
For each $i\,\in\, I_x$, let $s_i\,\in\, R$ be a local equation for the Cartier divisor $D_i$.
The corresponding \emph{Kummer morphism} on $U$ with ramification data $(\mathbf D,\mathbf r)$ is the morphism $$Z\,=\,\spec R [\mathbf t] / \left(\mathbf t^{\mathbf r} - \mathbf s\right)\longrightarrow \spec R\, ,$$ where the algebra is the tensor product over $R$ of $R[t_i]/\left(t_i^{r_i}-s_i\right)$ for all $i$ in $I_x$. \end{definition}

Notice that $Z$ is naturally endowed with a $\mu_{\mathbf r_x}$-action corresponding to the obvious $\mathbb Z^{I_x}/\mathbf r_x$-grading, and that $Z\longrightarrow \spec R$ is invariant.

 \begin{definition}
\label{def:tame_torsor}
	 Let $G/k$ be a finite abelian group scheme. A \emph{tamely ramified $G$-torsor} over $X$ with ramification data $(\mathbf D,\mathbf r)$ is the data of a scheme $Y$ endowed with an action of $G$ and a finite and flat $G$-invariant morphism $Y \longrightarrow X$ such that for each closed point $x$ of $X$, there exists 
	 a monomorphism $\mu_{\mathbf r_x} \longrightarrow G$ defined over an extension $k'/k$ such that in a \emph{fppf} neighbourhood of $x$ in $X$, the morphism $Y\longrightarrow X$ is isomorphic to $\left(Z\otimes_k k'\right)\times^{\mu_{\mathbf r_x}}G$, where $Z\longrightarrow \spec R$ is the Kummer morphism with ramification data $(\mathbf D,\mathbf r)$ locally defined by the choice of equations of $\mathbf D$ at $x$.
\end{definition}

 \begin{remark}
	 \label{rem:def-tame-torsor}
	 \mbox{}
\begin{enumerate}
	\item \label{rem:def-tame-torsor-detail} By `a \emph{fppf} neighbourhood of
$x$ in $X$' we mean : there exists a \emph{fppf} cover $X'\longrightarrow \spec \left( R\otimes_k k'\right)$ such that if $Z'=X'\times_{\spec R}Z$, then $ Z'\times^{\mu_{\mathbf r_x}}G$ and $Y'=X'\times_X Y$ are isomorphic as $G$-schemes over $X'$.
	\item 	 When $G\,=\,\mu_r$, one recovers the usual notion of uniform cyclic
cover (see \cite{arsvist:uniform}).
	\item This definition is mainly interesting when $D$ is a simple normal crossings divisor. Indeed, assume $X$ is normal and $Y\longrightarrow X$ is an abelian $G$-Galois cover tamely ramified along a normal crossings divisor $D$, in the classical sense (\cite[D\'efinition 2.2.2]{gm:tame}). It will turn out that $Y\longrightarrow X$ is a tamely ramified torsor under the abelian constant group scheme $G$, in the sense of Definition \ref{def:tame_torsor}, if and only if $D$ is a simple normal crossings divisor. See Remark \ref{rem:tame2stack} for a sketch of a proof and more details.

\end{enumerate}
\end{remark}

\subsection{Parabolic bundles}

We will employ C. Simpson's functorial definition of parabolic vector bundles as follows (see for instance \cite[\S 2]{bor:rep} for full details). We endow the set $\frac{1}{\mathbf r}\mathbb Z^I = \prod_{i\in I} \frac{\mathbb Z}{r_i} $ with the component-wise partial order, and denote by the same letter the associated category. The exponent $\cdot ^{op}$ stands for the opposite category.

\begin{definition}
A \emph{parabolic vector bundle} on $(X, \mathbf D)$ with weights in
$\frac{1}{\mathbf r}\mathbb Z^I$ consists of
\begin{itemize}
\item the data of a functor $\mathcal E_\cdot \,:\, \left(\frac{1}{\mathbf r}
	\mathbb Z^I\right)^{op} \longrightarrow \Vect X$ and,

\item for each integral multi-index $\mathbf l$ in $\mathbf Z^I$, a natural
transformation $\mathcal E_{\cdot+\mathbf l} \simeq 
\mathcal E_{\cdot}\otimes_{\mathcal O_X} \mathcal O_X(-\mathbf l \cdot \mathbf D)$.
\end{itemize}
The two pieces of data 
	must satisfy the following compatibility condition: for $\mathbf l \geq \mathbf 0$, the natural transformation $$\mathcal E_{\cdot+\mathbf l} \simeq \mathcal E_{\cdot}\otimes_{\mathcal O_X} \mathcal O_X(-\mathbf l \cdot \mathbf D)$$ is compatible with the morphisms $\mathcal E_{\cdot+\mathbf l}\longrightarrow \mathcal E_{\cdot}$ and $\mathcal O_X(-\mathbf l \cdot \mathbf D) \longrightarrow \mathcal O_X$.
\end{definition}

One way to analyze parabolic bundles is to look at their local weights, in the following sense. 

\begin{definition}\footnote{We thank Eric Ahlqvist for correcting our original definition.}
Given a closed point $x$ of $D$, and $\mathbf l \in \mathbb Z^{I}$, we
say that a parabolic vector bundle $\mathcal E_\cdot$ on $(X, \mathbf D)$ with weights in $\frac{1}{\mathbf r}\mathbb Z^I$ \emph{admits $\mathbf l /\mathbf r $ as a weight at $x$} if the %monomorphism $\left(\mathcal E_{({\mathbf l}+{\mathbf 1}) /\mathbf r}\right)_x \longrightarrow \left(\mathcal E_{\mathbf l /\mathbf r}\right)_x$ is not an isomorphism.
morphism $\left(\bigoplus_{i\in I_x}  \mathcal E_{({\mathbf l}+{\mathbf e_i}) /\mathbf r}\right)_x \longrightarrow \left(\mathcal E_{\mathbf l /\mathbf r}\right)_x$ is not an epimorphism ($(\mathbf e_i)_{i\in I}$ is the canonical basis of $\mathbb Z^I$). 
\end{definition}

\begin{example}
Any vector bundle $\mathcal E$ on $X$ can be endowed with a trivial parabolic structure denoted by $	\underline{\mathcal E}_\cdot$, whose set of weights is $\mathbb Z^{I}$ at any closed point $x$ of $D$.
\end{example}

We will be mainly concerned by the full sub-category of \emph{essentially finite} parabolic vector bundles. Their definition is as follows : first, parabolic vector bundles (with arbitrary weights) form a tensor category denoted by $\Par(X,\mathbf D)$. This enables to define \emph{finite} parabolic vector bundles as objects of this category satisfying a non trivial tensor relation $f(\mathcal E_\cdot )\simeq g(\mathcal E_\cdot )$, where $f$ and $g$ are two distinct polynomials in $\mathbb N[t]$. Then, just as in \cite{bv:nori_gerbe}, one defines an essentially finite object in $\Par(X,\mathbf D)$ as the kernel of an homomorphism between two finite objects. This notion gives rise to the full sub-category $\EFPar(X,\mathbf D)$ of essentially finite vector bundles. When $X$ is proper, geometrically connected and geometrically reduced, it follows from \cite[Theorem 7.13]{bv:nori_gerbe} that the category $\EFPar(X,\mathbf D)$ is Tannakian\footnote{To check this, one applies this theorem to the stacks of roots
introduced \S~\ref{sub:stack_of_roots}, see Remark \ref{rem:correspondence}.}.

As in any Tannakian category (see Appendix \ref{sec:abelian_objects_in_tannakian_categories}), one can construct a full sub-category by selecting only objects with abelian holonomy gerbe. For convenience of the reader, we spell out the Tannakian formulation explicitly in our situation.

\begin{definition}[]
\label{def:basic_parabolic_sheaf}
Let $\mathcal E_\cdot$ be an essentially finite parabolic sheaf, and denote by
$\pi(\mathcal E_\cdot)$ its fundamental group within $\EFPar(X,\mathbf D)$ (Definition 
\ref{def:fund_group_tannaka}). We will call that $\mathcal E_\cdot$ as 
\emph{basic} if there exists a $k$-affine group scheme $G$ and an isomorphism $\pi(\mathcal E) \simeq G\times_k \underline{\mathcal O_X}_\cdot$ as group schemes within $\EFPar(X,\mathbf D)$.
\end{definition}

When $X$ is proper, geometrically connected and geometrically reduced, we will use the notation $\EFPar(X,\mathbf D)^{bas}$ for the Tannakian category of essentially finite basic parabolic sheaves. This is just the Tannakian sub-category of objects of $\EFPar(X,\mathbf D)$ with abelian holonomy gerbe (see Appendix \ref{sec:abelian_objects_in_tannakian_categories}).

\subsection{Criterion of existence}

This is the main theorem of the paper.

\begin{theorem}\label{crit_exist}
Let $X/k$ be a proper scheme of finite type, endowed with a simple normal crossings divisor $D$. Assume that $X$ is geometrically connected and geometrically reduced. There exists a finite abelian group scheme $G/k$ and a
tamely ramified $G$-torsor $Y\longrightarrow X$ with ramification data $(\mathbf D,\mathbf r)$ if and only if for
each closed point $x$ of $D$, and for all $\mathbf l \in \mathbb Z^I $, such that
$0\leq \mathbf l<\mathbf r_x$, there exists an object $\mathcal E_\cdot$ in 
$\EFPar(X,\mathbf D)^{bas}$ with weights in $\frac{1}{\mathbf r}\mathbb Z^I$,
satisfying the condition that $\mathcal E_\cdot$ admits $\mathbf l /\mathbf r $ as a weight at $x$. 
\end{theorem}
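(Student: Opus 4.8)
The plan is to transfer the whole problem to the stack of roots $\mathcal X$ attached to $(\mathbf D,\mathbf r)$, constructed in \S~\ref{sub:stack_of_roots}, the hypotheses on $X$ and on $D$ being used to guarantee that $\mathcal X$ is well behaved and that its category of essentially finite bundles is Tannakian. First I would recall the tensor equivalence between $\Par(X,\mathbf D)$ with weights in $\frac{1}{\mathbf r}\mathbb Z^I$ and $\Vect(\mathcal X)$ (\cite{bis:par,bor:corr}); being compatible with the Tannakian formalism, it restricts to $\EFPar(X,\mathbf D)\simeq\EF(\mathcal X)$ and, by Appendix \ref{sec:abelian_objects_in_tannakian_categories}, to an equivalence on basic objects $\EFPar(X,\mathbf D)^{bas}\simeq\EF(\mathcal X)^{bas}$. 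Under this dictionary the residual gerbe of $\mathcal X$ at the point $\tilde x$ above a closed point $x\in D$ is $\B_{k(x)}\mu_{\mathbf r_x}$, the multi-indices $\mathbf l$ with $0\le\mathbf l<\mathbf r_x$ correspond bijectively to the characters of $\mu_{\mathbf r_x}$, and ``$\mathcal E_\cdot$ admits $\mathbf l/\mathbf r$ as a weight at $x$'' translates into ``the character $\mathbf l$ of $\mu_{\mathbf r_x}$ occurs in the restriction to $\tilde x$ of the bundle on $\mathcal X$ associated with $\mathcal E_\cdot$''. Hence the right-hand side of the theorem says exactly that, for every closed $x\in D$, the restriction functor $\EF(\mathcal X)^{bas}\to\Rep_{k(x)}\mu_{\mathbf r_x}$ hits every simple object; equivalently, the canonical morphism $\B_{k(x)}\mu_{\mathbf r_x}\to\Pi^{bas}_{\mathcal X/k}\times_k k(x)$ is a monomorphism of gerbes, i.e. $\mu_{\mathbf r_x}$ embeds into the (pro-finite, and by Appendix \ref{sec:abelian_objects_in_tannakian_categories} abelian) fundamental group scheme $\pi^{bas}$ of $\EFPar(X,\mathbf D)^{bas}$. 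On the other hand, by the results of \S~\ref{sec:stack_int}, in particular Proposition \ref{prop:alper_crit_torsors_local} --- where the abelian hypothesis on $G$ is crucial --- tamely ramified $G$-torsors on $(X,\mathbf D,\mathbf r)$ with $G$ finite abelian correspond to \emph{representable} $G$-torsors on $\mathcal X$, i.e. to morphisms $\mathcal X\to\B_kG$ that are faithful on inertia. So the theorem is reduced to the assertion: there is a finite abelian $G/k$ and a representable $G$-torsor on $\mathcal X$ if and only if $\mu_{\mathbf r_x}\hookrightarrow\pi^{bas}$ for every closed $x\in D$.

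For the ``only if'' direction, given a representable $G$-torsor on $\mathcal X$ with $G$ finite abelian, the Weil--Nori bundles $\mathcal E_V$ ($V\in\Rep_kG$) are finite, hence essentially finite, and they lie in $\EFPar(X,\mathbf D)^{bas}$ because $G$ is abelian, their holonomy gerbes being banded by subquotients of $G$. Representability means that the inertia homomorphism $\mu_{\mathbf r_x}\to G$ is a closed immersion for every $x$; hence $\Rep_kG\to\Rep_{k(x)}\mu_{\mathbf r_x}$ is dense, so every character of $\mu_{\mathbf r_x}$ occurs in some $\mathcal E_V|_{\tilde x}$, i.e. occurs as a weight of $\mathcal E_V$ at $x$. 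This is the stated weight condition.

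The heart of the matter is the ``if'' direction. Assume $\mu_{\mathbf r_x}\hookrightarrow\pi^{bas}$ for every closed $x\in D$, and write $\pi^{bas}=\varprojlim_\alpha G_\alpha$ as a cofiltered limit of finite abelian quotients (possible since $\pi^{bas}$ is pro-finite). Because $I$ is finite, $D$ is the union of the finitely many locally closed strata $D_J$ ($J\subseteq I$), each of finite type over $k$ and hence with finitely many connected components; over each connected piece the inertia of $\mathcal X$ is the constant group scheme $\mu_{\mathbf r_J}$ and the embedding $\mu_{\mathbf r_J}\hookrightarrow\pi^{bas}$ does not depend on the chosen point, conjugation being trivial in the abelian $\pi^{bas}$. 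Thus only finitely many subgroup schemes of the form $\mu_{\mathbf r_x}$ actually occur inside $\pi^{bas}$, and one may choose $\alpha$ large enough that $\mu_{\mathbf r_x}\hookrightarrow\pi^{bas}\twoheadrightarrow G_\alpha$ remains a closed immersion for all of them; put $G:=G_\alpha$. The corresponding finite quotient $\Pi^{bas}_{\mathcal X/k}\to\mathcal G_\alpha$ of the Nori fundamental gerbe, combined with the Tannakian characterization of \cite{bisbor:fundamental_gerbe}, yields a $G$-torsor on $\mathcal X$; by construction it is faithful on every residual gerbe, hence representable, and \S~\ref{sec:stack_int} then produces from it a tamely ramified $G$-torsor $Y\to X$ with ramification data $(\mathbf D,\mathbf r)$.

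The main obstacle, as I see it, is precisely this last step: one has to extract from the purely Tannakian weight hypothesis an honest finite \emph{abelian} group scheme $G$ over $k$ together with a genuine $G$-torsor on $\mathcal X$ whose total space is a scheme, and to do so uniformly over the a priori infinitely many closed points of $D$. The two mechanisms that make this possible are, first, the reduction of these infinitely many local conditions to finitely many, which rests on the finiteness of the stratification of $D$ and on the abelian hypothesis --- the latter being genuinely indispensable, witness Rydh's counter-examples in Appendix \ref{sec:David_Rydh_counter_example} --- and, second, the solution of the Nori-uniformization problem for the stack of roots via the Tannakian characterization of \cite{bisbor:fundamental_gerbe}, which is what turns the Nori fundamental gerbe into an actual representable torsor.
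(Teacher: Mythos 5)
Your overall architecture is the same as the paper's: pass to the stack of roots $\mathfrak X$, use Proposition \ref{prop:equiv} (hence Proposition \ref{prop:alper_crit_torsors_local}) to identify tamely ramified $G$-torsors with $G$-torsors $Y\to\mathfrak X$ with $Y$ a scheme, translate the weight condition at $x$ into a condition on the restriction of basic essentially finite bundles to the residual gerbe $\mathcal G_x\simeq \B_{k(x)}\mu_{\mathbf r_x}$, and invoke the Tannakian characterization of uniformizability from \cite{bisbor:fundamental_gerbe}. Two remarks, one of which is a genuine gap.

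The gap is in your reduction from the infinitely many local conditions $\mu_{\mathbf r_x}\hookrightarrow\pi^{bas}\otimes k(x)$ to a single finite abelian quotient $G_\alpha$. You stratify $D$ and claim that along a connected stratum the embedding of $\mu_{\mathbf r_J}$ into $\pi^{bas}$ ``does not depend on the chosen point, conjugation being trivial.'' Triviality of conjugation only removes the conjugacy ambiguity in the homomorphism attached to a \emph{single} point; it does not show that the homomorphism is locally constant as $x$ varies in the stratum, which is what you need to conclude that only finitely many subgroup schemes occur. To make this work you would need a rigidity statement for homomorphisms out of the multiplicative-type group $\mu_{\mathbf r_J}$ over a connected base (or some substitute). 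The paper avoids this entirely: Proposition \ref{prop:char_ab_unif} rests on Lemma \ref{lem:sugg-referee}, which shows that faithfulness of $\mathfrak X\to\varprojlim_i\Pi_i$ descends to a finite level because the locus where the relative inertia $I_{\mathfrak X/\Pi_i}$ is trivial is \emph{open}, these opens cover $\mathfrak X$, and quasi-compactness plus cofilteredness give a single $i$. That openness argument is pointwise-free and is the clean way to close your gap.

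A lesser point: you assert that ``$\mathcal E_\cdot$ admits $\mathbf l/\mathbf r$ as a weight at $x$'' is equivalent to the character $\mathbf l$ occurring in $\mathcal F_{|\mathcal G_x}$. This is true but is not purely formal: the paper proves it by pushing forward the exact sequence $0\to\mathcal N^\vee\to\mathcal O_{\mathfrak X}\to(j_D)_*\mathcal O_{\mathfrak D}\to 0$ twisted by $\mathcal F\otimes{\mathcal N^{\otimes\mathbf l}}^\vee$, using exactness of $\pi_*$ (tameness) and cohomological base change along the gerbe $\mathfrak D\to D$. Your proposal should at least acknowledge that this computation is needed.
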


The proof is given in Section \ref{sec:stack_int} after collecting some preliminary results. 
\begin{remark}\mbox{}
\begin{enumerate}
\item 
The ``only if'' part is straightforward, and we can moreover produce an essentially
finite vector bundle $\mathcal E_\cdot$ independent on $x$ and
$\mathbf l$. Namely, it is easy to see that the parabolic bundle associated to
$\mathcal O_Y$ via the equivariant-parabolic correspondence (see 
\cite{bis:par,bor:corr}) verifies the required property. It is thus rather remarkable
that the ``if'' part is true.
\item We have assumed for simplicity that $D$ is a simple normal crossings divisor. But the theorem should hold with the same proof for a general normal crossings divisor, if on one hand one uses the definition of parabolic sheaves given in \cite{bv:par_sheaves}, and on the other hand one modifies Definition \ref{def:tame_torsor} so that Proposition \ref{prop:equiv} holds with the refined stack of roots used in \cite{bv:par_sheaves}\footnote{While this paper was under review, this result has been shown in greater generality by Eric Ahlqvist, see \cite{ahl:building}.}
\item If one removes the hypothesis that $G$ is abelian (and accordingly one considers non-necessarily basic parabolic vector bundles) the statement of the theorem makes sense but is very likely wrong, see Remark \ref{rem:rydh}. The culprit is our Definition \ref{def:tame_torsor} of a tame torsor which appears to be wrong for our purposes in the non-abelian setup. We do not know how to improve it in order to get a correct statement.
\end{enumerate}
\end{remark}

\section{Stacky interpretation of tamely ramified \texorpdfstring{$G$}{G}-torsors}

\label{sec:stack_int}

\subsection{Stack of roots}
\label{sub:stack_of_roots}

The main tool of the proof of Theorem \ref{crit_exist} is the stack of roots associated to the data $X$, 
$(\mathbf D,\mathbf r)$. For completeness, we recall A. Vistoli's definition.

\begin{definition}[\cite{agv:gw}, Appendix B]
	The \emph{stack of roots} $\sqrt[\mathbf r]{\mathbf D/X}$ is the stack classifying $\mathbf r$-th roots of $(\mathcal O_X(\mathbf D), s_{\mathbf D})$.
\end{definition}

Here $s_{\mathbf D}$ denotes the family of canonical sections. So on $\sqrt[\mathbf 
r]{\mathbf D/X}$ there is, for each $i\in I$, a universal $r_i$-th root 
$(\mathcal N_i,t_i)$ of $(\mathcal O_X(D_i),s_{D_i})$. For each $\mathbf l \,\in\, 
\mathbb Z^I$, we denote $\otimes_{i \in I} 
{\mathcal N_i}^{\otimes l_i}$ by $\mathcal N^{\otimes \mathbf l}$. For convenience, we will denote $\sqrt[\mathbf r]{\mathbf D/X}$ by $\mathfrak X$ in the rest of the proof. We will also use the notation $\pi: \mathfrak X \longrightarrow X$ for the morphism to the moduli space. For each $i\in I$, there is a unique effective Cartier divisor $\mathcal D_i$ on $\mathfrak X$ such that $(\mathcal O_{\mathfrak X}(\mathcal D_i),s_{\mathcal D_i})\simeq (\mathcal N_i,t_i)$. We put $\mathcal D = \cup_{i\in I} \mathcal D_i$.

The stacks of roots are locally quotient stacks of Kummer morphisms :

\begin{lemma}
\label{lem:kum-quot}
Let $U=\spec R\subset X$ an affine open subset and for each $i\in I$, let $s_i\in R$ a non-zero element giving a local equation of $D_i$. Then there exists a canonical $R$-isomorphism~:

\[ U\times _X \sqrt[\mathbf r]{\mathbf D/X} \,\simeq\,\left [ \left(\spec R [\mathbf t] / \left(\mathbf t^{\mathbf r} - \mathbf s\right)\right) / \mu_\mathbf r \right]\] 
where the algebra is the tensor product over $R$ of $R[t_i]/\left(t_i^{r_i}-s_i\right)$ for all $i$ in $I$.
\end{lemma}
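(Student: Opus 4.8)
The plan is to prove Lemma \ref{lem:kum-quot} by using the universal property of the stack of roots, reducing everything to the local affine chart $U = \spec R$ on which each line bundle $\mathcal{O}_X(D_i)$ is trivialized by the section $s_i$. First I would recall that, by definition, a morphism $T \longrightarrow \sqrt[\mathbf{r}]{\mathbf{D}/X}$ over $X$ is the data, for each $i \in I$, of a line bundle $\mathcal{M}_i$ on $T$ with a section $u_i$ together with an isomorphism $(\mathcal{M}_i^{\otimes r_i}, u_i^{\otimes r_i}) \simeq (\mathcal{O}_X(D_i), s_{D_i})|_T$; so over $T \to U$ we are asking for $r_i$-th roots of the \emph{trivialized} pair $(\mathcal{O}_U, s_i)$. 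The trivialization $s_i$ identifies $\mathcal{O}_U(D_i)|_U \cong \mathcal{O}_U$ and turns $s_{D_i}$ into multiplication by the function $s_i \in R$, so the fiber product $U \times_X \sqrt[\mathbf{r}]{\mathbf{D}/X}$ becomes the stack of $\mathbf{r}$-th roots of the pair $(\mathcal{O}_U, \mathbf{s})$ over $U$.

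Next I would identify this latter stack with the quotient stack on the right-hand side. There is a standard description (going back to the $\sqrt[r]{L/X}$ construction, e.g. \cite{agv:gw}, Appendix B, or \cite{Cadman}): the stack of $r$-th roots of a \emph{trivialized} line bundle with section $(\mathcal{O}_U, f)$ is canonically $[\,(\spec R[t]/(t^r - f))\,/\,\mu_r\,]$, where $\mu_r$ acts on $t$ via its tautological character. Indeed, a $\mu_r$-torsor $P \to T$ together with a $\mu_r$-equivariant map $P \to \spec R[t]/(t^r - f)$ is the same as a decomposition $P = \spec_T \bigoplus_{j \in \mathbb{Z}/r} \mathcal{M}^{\otimes(-j)}$ for a line bundle $\mathcal{M}$ on $T$ (the weight-one eigensheaf), equipped with a global section $u$ of $\mathcal{M}$ whose $r$-th power, under the algebra structure, equals the pullback of $f$; that is exactly an $r$-th root of $(\mathcal{O}_T, f|_T)$. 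Taking the tensor product over $R$ of the individual constructions $R[t_i]/(t_i^{r_i} - s_i)$ for $i \in I$ corresponds, on the stack side, to taking the fiber product over $U$ of the stacks $\sqrt[r_i]{D_i/X}$, which is precisely $\sqrt[\mathbf{r}]{\mathbf{D}/X}$; and the product of the groups $\mu_{r_i}$ is $\mu_{\mathbf{r}}$, so the two quotient descriptions match.

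I would then check that the isomorphism so constructed is \emph{canonical}, i.e.\ independent of auxiliary choices and compatible with the descent data needed to glue — although here the statement is purely local over the fixed chart $U$ with the fixed equations $s_i$, so "canonical" only means functorial in $T$, which is automatic from the 2-Yoneda argument above. It is also worth remarking that the universal root $(\mathcal{N}_i, t_i)$ restricts, under this isomorphism, to the line bundle on $[\spec R[\mathbf{t}]/(\mathbf{t}^{\mathbf{r}} - \mathbf{s})\,/\,\mu_{\mathbf{r}}]$ associated to the weight-$e_i$ character of $\mu_{\mathbf{r}}$, with tautological section $t_i$; this makes the comparison of Cartier divisors $\mathcal{D}_i$ transparent and will be used later.

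The main obstacle, such as it is, is bookkeeping rather than conceptual: one must be careful that the equivalence between "$\mu_r$-equivariant sheaves" and "$\mathbb{Z}/r$-graded sheaves" is set up correctly (this is where one uses that $\mu_r$ is diagonalizable, so its representation category is $\mathbb{Z}/r$-graded vector spaces), and that forming the $I$-fold tensor product of Kummer algebras commutes on the nose with forming the $I$-fold fiber product of root stacks. Once the single-divisor case $|I| = 1$ is settled by the 2-Yoneda computation sketched above, the general case follows formally. I expect no genuine difficulty, which is presumably why the paper states it as a lemma to be taken essentially from \cite{agv:gw}.
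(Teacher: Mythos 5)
Your proof is correct and follows the standard argument. The paper does not actually prove this lemma---it simply cites \cite[Corollaire 3.5]{bor:corr} and \cite[Corollaire 2.4.5]{bor:rep}---and the argument carried out in those references is essentially the one you give: trivialize each $(\mathcal O_X(D_i),s_{D_i})$ over $U$, identify the root stack of a trivialized pair with the quotient of the Kummer cover by $\mu_{r_i}$ via the equivalence between $\mu_{r_i}$-equivariant algebras and $\mathbb Z/r_i$-graded ones, and observe that the fiber product of root stacks corresponds to the tensor product of Kummer algebras and the product of the groups $\mu_{r_i}$.
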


\begin{proof}
See \cite[Corollaire 3.5]{bor:corr} or \cite[Corollaire 2.4.5]{bor:rep}.
\end{proof}

\begin{remark}
\label{rem:kum-quot}
If $x\in U$, by shrinking $U$, we can assume that $x$ belongs to all branches
meeting $U$, and then, with the notations of Definition \ref{def:Kum-com}, we get
that $ U\times _X \sqrt[\mathbf r]{\mathbf D/X} \simeq [Z/\mu_{\mathbf r_x}]$.
\end{remark}

\begin{lemma}
\label{lem:residual-gerbe}
Let $x\,\in\, \sqrt[\mathbf r]{\mathbf D/X}$ be a closed point. Then the residual gerbe $\mathcal G_x$ at $x$ exists, is neutral, and (non canonically) isomorphic to $\B_{k(x)} \mu_{\mathbf r_x}$, where $k(x)$ is the field of definition of the image of $x$ in $X$.
\end{lemma}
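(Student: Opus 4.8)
The plan is to work locally on $X$ and reduce to an explicit computation on a quotient stack of a Kummer morphism, using Lemma \ref{lem:kum-quot} and Remark \ref{rem:kum-quot}. First I would let $x\in\mathfrak X=\sqrt[\mathbf r]{\mathbf D/X}$ be a closed point and let $\bar x\in X$ be its image under $\pi$; since $\pi$ is proper (it is finite by the local description) and $x$ is closed, $\bar x$ is a closed point of $X$. Choose an affine open $U=\spec R\subset X$ containing $\bar x$, small enough that, as in Remark \ref{rem:kum-quot}, $\bar x$ lies on every branch $D_i$ meeting $U$ and $U\times_X\mathfrak X\simeq[Z/\mu_{\mathbf r_x}]$, where $Z=\spec R[\mathbf t]/(\mathbf t^{\mathbf r}-\mathbf s)\longrightarrow\spec R$ is the Kummer morphism attached to a choice of local equations $s_i$ of $D_i$ at $\bar x$.

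Next I would identify the point $x$ on the quotient stack $[Z/\mu_{\mathbf r_x}]$. The key observation is that the fibre of $Z\longrightarrow U$ over $\bar x$ is $\spec k(\bar x)[\mathbf t]/(\mathbf t^{\mathbf r})$, a single (non-reduced) point with reduced point given by $t_i=0$ for all $i\in I_{\bar x}$; call this closed point $z\in Z$. The $\mu_{\mathbf r_x}$-action fixes $z$, and $z$ is the only point of $Z$ lying over $\bar x$, so $x$ corresponds to $z$ and the residual gerbe of $x$ in $[Z/\mu_{\mathbf r_x}]$ is $[\,\spec k(\bar x)\,/\,(\mathrm{stabilizer\ of\ }z)\,]$. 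Because the action on the tangent directions $t_i$ is via the standard character of the $i$-th factor, the stabilizer group scheme of $z$ is all of $\mu_{\mathbf r_x}\times_k k(\bar x)$ (the action on the reduced point $\spec k(\bar x)$ is trivial). Hence the residual gerbe exists and is isomorphic to $\B_{k(\bar x)}\mu_{\mathbf r_x}$; it is neutral because it carries the section coming from the $\mu_{\mathbf r_x}$-fixed point $z$, i.e. it is a trivial gerbe of the form $[\spec k(\bar x)/\mu_{\mathbf r_x}]$. Finally, one checks that the residual gerbe computed locally agrees with the residual gerbe in $\mathfrak X$, which is a general fact for open immersions of stacks, so $k(x)=k(\bar x)$ is indeed the field of definition of the image of $x$ in $X$.

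The main obstacle I anticipate is the bookkeeping needed to justify that the stabilizer is the full $\mu_{\mathbf r_x}$ rather than a proper subgroup scheme: one must argue at the level of $R$-points (or $T$-points for arbitrary $T$) that an automorphism of the root data $(\mathcal N_i,t_i)$ over the residual gerbe is precisely the datum of a family of $r_i$-th roots of unity, using that $t_i$ vanishes at $x$ so it imposes no condition, while for $i\notin I_{\bar x}$ the section $t_i$ is a unit and forces the corresponding component of the automorphism to be $1$ — which is exactly why the group that appears is $\mu_{\mathbf r_x}$ and not $\mu_{\mathbf r}$. The neutrality and the "non canonical" isomorphism then follow formally, the non-canonicity reflecting the choice of local equations $s_i$ (equivalently, the choice of a $\mu_{\mathbf r_x}$-fixed lift of $\bar x$ to $Z$). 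Everything else is either Lemma \ref{lem:kum-quot} or standard facts about residual gerbes on quotient stacks.
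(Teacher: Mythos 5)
Your proof is correct and follows essentially the same route as the paper: both reduce via Lemma \ref{lem:kum-quot} and Remark \ref{rem:kum-quot} to the local quotient presentation $[Z/\mu_{\mathbf r_x}]$ and identify the (closed) fibre over the image point as $\left[\left(\spec k(x)[\mathbf t]/(\mathbf t^{\mathbf r_x})\right)/\mu_{\mathbf r_x}\right]$, whose reduction is $\B_{k(x)}\mu_{\mathbf r_x}$. The only difference is that the paper delegates the formal existence and identification of the residual gerbe to \cite[Tags 06ML, 06MT]{stacks-project}, whereas you carry out the stabilizer computation at the fixed point by hand.
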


\begin{proof}
	We use \cite[Tag 06ML]{stacks-project} as a reference for residual gerbes. From Lemma \ref{lem:kum-quot} and Remark \ref{rem:kum-quot}, we see that $x$ belong to a closed subscheme of $\sqrt[\mathbf r]{\mathbf D/X}$ of the form $\left [ \left(\spec k(x) [\mathbf t] / \left(\mathbf t^{\mathbf r_x} \right)\right) / \mu_{\mathbf r_x} \right]$, and \cite[Tag 06MT]{stacks-project} enables to conclude.
\end{proof}

\subsection{The equivalence}

This is the relation between the stack of roots and tamely ramified torsors.

\begin{proposition}\label{prop:equiv}
Let $Y$ be a $k$-scheme endowed with an action of a finite abelian $k$-group scheme $G$ and let $Y\longrightarrow X$ be a finite, flat, and $G$-invariant morphism. Then $Y\longrightarrow X$ factors through a $G$-torsor $Y\longrightarrow \mathfrak X$ if and only if $Y\longrightarrow X$ is a tamely ramified $G$-torsor with ramification data $(\mathbf D,\mathbf r)$.
\end{proposition}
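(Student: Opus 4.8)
The plan is to prove both implications by reducing everything to the \emph{fppf}-local situation, where Lemma \ref{lem:kum-quot} identifies $\mathfrak X$ with a quotient stack of a Kummer morphism, and where the defining property of a tamely ramified torsor is precisely a statement about Kummer covers. So the first step is to observe that each of the two properties in the statement --- ``$Y\to X$ factors through a $G$-torsor over $\mathfrak X$'' and ``$Y\to X$ is a tamely ramified $G$-torsor'' --- is \emph{fppf}-local on $X$, in the sense that it holds if and only if it holds after pulling back along some \emph{fppf} cover $X'\to X$. For the torsor condition this is standard descent; for the tame-torsor condition it is essentially the content of Remark \ref{rem:def-tame-torsor}\eqref{rem:def-tame-torsor-detail}, so it suffices to work étale-locally (indeed Zariski-locally) around a closed point $x$ of $X$ and then, if needed, pass to a further \emph{fppf} cover.

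Next I would set up the local picture. Fix a closed point $x$, shrink $X$ to an affine neighbourhood $U=\spec R$ in which $x$ lies on every branch of $D$ meeting $U$ and each $D_i$ ($i\in I_x$) has an equation $s_i\in R$. By Lemma \ref{lem:kum-quot} and Remark \ref{rem:kum-quot}, $\mathfrak X|_U\simeq[Z/\mu_{\mathbf r_x}]$ with $Z$ the Kummer cover of Definition \ref{def:Kum-com}. Now for the ``if'' direction: if $Y\to X$ is a tamely ramified $G$-torsor, then \emph{fppf}-locally $Y\simeq (Z\otimes_k k')\times^{\mu_{\mathbf r_x}}G$. Pulling back to $Z\otimes_k k'$ (which is an \emph{fppf} cover of $[Z/\mu_{\mathbf r_x}]\otimes_k k' = \mathfrak X|_U\otimes_k k'$, being a $\mu_{\mathbf r_x}$-torsor over it), the induced object $(Z\otimes_k k')\times^{\mu_{\mathbf r_x}}G$ becomes the trivial $G$-torsor, because inducing along $\mu_{\mathbf r_x}\hookrightarrow G$ and then pulling back along the $\mu_{\mathbf r_x}$-torsor $Z\to[Z/\mu_{\mathbf r_x}]$ trivializes it. Hence $Y\to X$ is, \emph{fppf}-locally on $\mathfrak X$, a trivial $G$-torsor; by descent it is a $G$-torsor on $\mathfrak X$. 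One must also check the composite $Y\to\mathfrak X\to X$ recovers the original map, but this is built into the identifications. Conversely, for the ``only if'' direction: given a $G$-torsor $Y\to\mathfrak X$, work locally where $\mathfrak X|_U=[Z/\mu_{\mathbf r_x}]$; pull back along $Z\to[Z/\mu_{\mathbf r_x}]$, which is a $\mu_{\mathbf r_x}$-torsor, to trivialize — \emph{after a further \emph{fppf} base change $X'\to\spec R$} if the torsor is not already trivial on $Z$ — so that $Y|_{Z'}\simeq G\times Z'$ equivariantly. One then checks that a $G$-torsor on $[Z/\mu_{\mathbf r_x}]$ equivariantly trivialized on $Z$ is the same as a $\mu_{\mathbf r_x}$-equivariant structure on $G\times Z$ compatible with the two $G$-actions, i.e.\ a group homomorphism $\mu_{\mathbf r_x}\to G$ together with descent data, and the corresponding $Y$ is exactly $(Z'\otimes\cdots)\times^{\mu_{\mathbf r_x}}G$. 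This matches Definition \ref{def:tame_torsor}, once one knows the homomorphism $\mu_{\mathbf r_x}\to G$ is a \emph{monomorphism}.

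The main obstacle — and this is exactly the subtle point flagged in the introduction as Proposition \ref{prop:alper_crit_torsors_local}, whose non-abelian failure is the Rydh counterexample — is to go \emph{from} the local \emph{fppf}-trivialized data \emph{back to} an honest $G$-torsor on $\mathfrak X$ (in the ``if'' direction) and, dually, to control that the homomorphism $\mu_{\mathbf r_x}\to G$ arising in the ``only if'' direction is injective and that the local pieces glue. Concretely: a $G$-torsor on the quotient stack $[Z/\mu_{\mathbf r_x}]$ corresponds to a $\mu_{\mathbf r_x}$-equivariant $G$-torsor on $Z$; over the point where $Z$ has a stacky residual gerbe $\B\mu_{\mathbf r_x}$ (Lemma \ref{lem:residual-gerbe}), restricting gives a homomorphism $\mu_{\mathbf r_x}\to\BAut_G = G$ (using $G$ abelian, so its automorphisms as a $G$-torsor are $G$ itself), and the torsor being \emph{representable by a scheme} $Y$ — not just an algebraic space or stack — forces this homomorphism to be a monomorphism, since otherwise the coarse space $Y$ would still carry stacky gerbe structure. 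Here is precisely where commutativity of $G$ is used: for non-abelian $G$ the automorphism group scheme of a $G$-torsor is an inner form of $G$, the relevant local model is $[G/\mu_{\mathbf r_x}]$ with $\mu_{\mathbf r_x}$ acting by conjugation through some embedding, and such quotients need not be schemes even when the naive numerics work out, which is the content of Rydh's examples. So I would isolate the statement ``a $G$-torsor on $\mathfrak X$ whose pushforward to $X$ is a scheme is, locally around each $x$, induced from a monomorphism $\mu_{\mathbf r_x}\hookrightarrow G$'' as the key lemma (this is Proposition \ref{prop:alper_crit_torsors_local}), prove the present proposition granting it, and handle the remaining gluing by faithfully flat descent along $\mathfrak X\to X$ together with the fact that the residual gerbes and their automorphisms are pinned down abstractly by Lemma \ref{lem:residual-gerbe}.

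Finally, a routine but necessary bookkeeping step: checking that ``finite and flat'' for $Y\to X$ is equivalent, under the factorization, to ``$G$-torsor'' for $Y\to\mathfrak X$ — flatness and finiteness of $Y\to X$ together with the finite flat $\mathfrak X\to X$ and the local model force $Y\to\mathfrak X$ to be finite flat of the expected degree $|G|$, hence (being locally trivial in the \emph{fppf} topology by the above) a torsor; conversely a $G$-torsor over $\mathfrak X$ composed with the finite flat representable $\mathfrak X\to X$ is finite flat over $X$. I expect this part to be a short verification using Lemma \ref{lem:kum-quot} to reduce to the explicit Kummer algebras, where everything is visibly finite free over $R$.
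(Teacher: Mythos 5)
Your overall strategy --- reduce to the local model of Lemma \ref{lem:kum-quot} and Remark \ref{rem:kum-quot}, and locate the use of commutativity of $G$ in an Alper-type local comparison of torsors --- matches the paper's, and you correctly sense that the scheme-ness of $Y$ is what forces $\mu_{\mathbf r_x}\to G$ to be a monomorphism (via representability of $\mathfrak X\xrightarrow{Y}\B G$, hence of $\B \mu_{\mathbf r_x}\to \B G$). But there are two genuine gaps. First, in the direction ``tamely ramified $\Rightarrow$ torsor on $\mathfrak X$'', the property ``$Y\to X$ factors through a $G$-torsor $Y\to\mathfrak X$'' is not fppf-local on $X$ by ``standard descent'': the local constructions produce morphisms $Y'\to\mathfrak X'$ with no given identifications on overlaps, and to glue them you must know that such a morphism is \emph{unique up to unique isomorphism}. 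This is exactly Proposition \ref{prop:hom} ($\BHom^G_X(Y,\mathfrak X)\to X$ is an isomorphism), whose full faithfulness rests on the rigidity Lemma \ref{lem:rigidity} --- automorphisms of $\mathfrak X$ over $X$ are trivial because the universal roots are determined on the complement of $\mathcal D$ --- together with the freeness of the $G$-action on $Y$ away from the divisor. Your appeal to ``descent along $\mathfrak X\to X$'' and to Lemma \ref{lem:residual-gerbe} does not supply this; the residual gerbes say nothing about $X$-automorphisms of $\mathfrak X$ or about uniqueness of $X$-morphisms $Y\to\mathfrak X$.

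Second, in the converse direction your step ``pull back along $Z\to[Z/\mu_{\mathbf r_x}]$ \dots\ after a further fppf base change $X'\to\spec R$ if the torsor is not already trivial on $Z$'' assumes precisely the hard point. Covers of $\mathfrak X$ of the form $X'\times_X\mathfrak X$ are not cofinal among fppf covers of $\mathfrak X$ (they never kill the stackiness), so a $G$-torsor on $\mathfrak X$ has no a priori reason to acquire the prescribed induced form after a base change on $X$ alone; this is exactly what Proposition \ref{prop:alper_crit_torsors_local} provides, and it fails for non-abelian $G$ (Rydh's examples). Relatedly, the ``key lemma'' you propose to grant is not Proposition \ref{prop:alper_crit_torsors_local} but is essentially the conclusion of this direction restated, so granting it proves nothing. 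The paper's actual route is: restrict $Y$ to the residual gerbe $\mathcal G_x\simeq \B \mu_{\mathbf r_x}$ to extract the monomorphism $H=\mu_{\mathbf r_x}\hookrightarrow G$; form the induced torsor $Y_2=Z\times^{H}G$; observe that $Y_1=Y$ and $Y_2$ agree on $\mathcal G_x$ by construction; and then invoke Proposition \ref{prop:alper_crit_torsors_local}, which for abelian $G$ reduces to the single-torsor statement of Proposition \ref{prop:alper_crit_torsors_single} because $\BIsom_G(Y_1,Y_2)$ is then itself a $G$-torsor. That last statement is a genuine good-moduli-space/Tannakian argument, not a formal descent step, and your proof as written leaves it unaddressed.
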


The proof of Proposition \ref{prop:equiv} will be given in the next two sections.

\begin{remark}
\label{rem:equiv}\mbox{}
\begin{enumerate}
\item 
One deduces from the parabolic--orbifold correspondence (see \cite{bis:par,bor:corr}) that $G$-torsors on $\mathfrak X$ can be described as tensor exact functors from representations of $G$ to parabolic bundles. It follows from Proposition \ref{prop:equiv} that it also gives another interpretation of tamely ramified $G$-torsors. This point of view in fact precedes the appearance of stacks of roots, see for instance \cite{bbn:par}.

\item \label{rem:equiv-alg-space} If $Y$ is a $k$-algebraic space, and $Y\longrightarrow
\mathfrak X$ is a $G$-torsor, where $G$ is a finite $k$-group scheme, $Y$ must be a
$k$-scheme (since then $Y$ is finite, hence affine, over $X$, a $k$-scheme by
assumption).
\end{enumerate}
\end{remark}

\subsection{From tamely ramified torsors to torsors on the stack of roots}

We fix a tamely ramified $G$-torsor $Y\longrightarrow X$ throughout this section, where $G$ is a finite, non necessarily abelian, $k$-group scheme. The ramification data is fixed as well, which defines a stack of roots $\mathfrak 
X$. 

\begin{proposition}
\label{prop:hom}
Let $\BHom^G_X(Y,\mathfrak X)$ be the fppf stack over $X$ classifying $X$-morphisms $Y\longrightarrow \mathfrak X$ that are $G$-torsors. Then $\BHom^G_X(Y,\mathfrak X)\longrightarrow X$ is an isomorphism.
\end{proposition}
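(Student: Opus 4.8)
The plan is to verify that $\BHom^G_X(Y,\mathfrak X) \to X$ is an isomorphism by checking it is an \emph{fppf} sheaf that is locally isomorphic to $X$; since being an isomorphism is fppf-local on the target, it suffices to work in an \emph{fppf} neighbourhood of each closed point $x$ of $X$. First I would reduce to the local model: by Definition \ref{def:tame_torsor}, after passing to such a neighbourhood and a field extension $k'/k$, the torsor $Y \to X$ becomes $(Z\otimes_k k')\times^{\mu_{\mathbf r_x}} G$, where $Z \to \spec R$ is the Kummer morphism; and by Lemma \ref{lem:kum-quot} together with Remark \ref{rem:kum-quot}, $\mathfrak X$ restricted to this neighbourhood is $[Z/\mu_{\mathbf r_x}]$. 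So the claim becomes: the stack of $G$-torsor morphisms $(Z\times^{\mu_{\mathbf r_x}}G) \to [Z/\mu_{\mathbf r_x}]$ over $\spec R$ is trivial (a point). Here the tautological morphism $Z \to [Z/\mu_{\mathbf r_x}]$ is a $\mu_{\mathbf r_x}$-torsor, and pushing out along $\mu_{\mathbf r_x}\hookrightarrow G$ gives the canonical morphism $Z\times^{\mu_{\mathbf r_x}}G \to [Z/\mu_{\mathbf r_x}]$, which is a $G$-torsor; this exhibits one object of the Hom-stack, so the stack is non-empty.

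The core computation is then that this object has no nontrivial automorphisms and that any two objects are (uniquely) isomorphic, i.e. the Hom-stack is a \emph{trivial} gerbe, in fact the final sheaf. For automorphisms: a $G$-equivariant automorphism of the $G$-torsor $Z\times^{\mu_{\mathbf r_x}}G$ over $[Z/\mu_{\mathbf r_x}]$ is given by a section of the inertia, and pulling back along $Z \to [Z/\mu_{\mathbf r_x}]$ this becomes a $\mu_{\mathbf r_x}$-equivariant, $G$-equivariant automorphism of $Z\times G$ over $Z$, hence an element of $G(Z\otimes_R Z\times_{\mathfrak X} Z \dots)$ — concretely, a $\mu_{\mathbf r_x}$-invariant map $Z\to G$ equivariant for the conjugation/translation structure; since $Z$ is connected over the residual gerbe and $G$ is finite, and since (here abelianness, or at least the fact that $\mu_{\mathbf r_x}$ lands in a group where things commute appropriately) this forces the map to be constant $=e$. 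For two objects being isomorphic: any $G$-torsor morphism $Y \to \mathfrak X = [Z/\mu_{\mathbf r_x}]$ over $\spec R$ corresponds to a $\mu_{\mathbf r_x}$-torsor $P\to \spec R$ together with an identification $Y \simeq P\times^{\mu_{\mathbf r_x}} G$ as $G$-torsors; but $Y$ is already presented as $Z\times^{\mu_{\mathbf r_x}}G$, and since $Z \to \spec R$ is a $\mu_{\mathbf r_x}$-torsor over the relevant base, the torsor $P$ is forced to be $Z$ up to unique isomorphism. This last point is exactly where one uses that $G$ is abelian (or that the monodromy is abelian) — it is the statement that pushing out a $\mu_{\mathbf r_x}$-torsor to $G$ loses no information, i.e. $\mathrm{H}^1(\spec R, \mu_{\mathbf r_x}) \to \mathrm{H}^1(\spec R, G)$ is injective on the relevant substack — and it is flagged in the introduction (via Proposition \ref{prop:alper_crit_torsors_local} and Rydh's counterexamples) as the delicate hypothesis; I expect this to be the main obstacle.

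Finally, I would assemble the local statement into the global one: the functor $X \to \BHom^G_X(Y,\mathfrak X)$ sending each $X$-scheme to the canonical local $G$-torsor glues, because the local objects and the unique isomorphisms between them satisfy the cocycle condition (uniqueness of isomorphisms kills any obstruction), so by \emph{fppf} descent $\BHom^G_X(Y,\mathfrak X)$ is representable by $X$ and the structure morphism is an isomorphism. The bookkeeping for the descent is routine once the local triviality and the vanishing of automorphisms are in hand; the only subtle input is the abelianness used in the ``two objects are isomorphic'' step.
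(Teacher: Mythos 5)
Your plan for the surjectivity half matches the paper: reduce via Definition \ref{def:tame_torsor} to the local Kummer model, use Lemma \ref{lem:quot} to identify $[Y'/G]\simeq[Z'/\mu_{\mathbf r_x}]$ and Remark \ref{rem:kum-quot} to identify the latter with $\mathfrak X'$, producing a local object of the Hom-stack. The gap is in the ``any two objects are (uniquely) isomorphic'' half, which is where the real content lies. An object of $\BHom^G_X(Y,\mathfrak X)$ is a morphism $q\colon Y\to\mathfrak X$ exhibiting the \emph{fixed} $G$-scheme $Y$ as a $G$-torsor over $\mathfrak X$; it is not the datum of a $\mu_{\mathbf r_x}$-torsor $P\to\spec R$ together with an identification $Y\simeq P\times^{\mu_{\mathbf r_x}}G$, and the step you flag as the crux --- injectivity of $\h^1(\spec R,\mu_{\mathbf r_x})\to\h^1(\spec R,G)$ --- is both the wrong question and false in general even for abelian $G$ (a monomorphism of finite abelian group schemes need not induce an injection on $\h^1$). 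The mechanism that actually works is: since $q$ is a $G$-torsor, it factors canonically as $Y\to[Y/G]\simeq\mathfrak X$, so two objects $q,q'$ differ by an $X$-automorphism of $\mathfrak X$; one then needs the rigidity statement (Lemma \ref{lem:rigidity}) that any $X$-automorphism of the stack of roots is uniquely isomorphic to the identity, proved from the universal property of $\sqrt[\mathbf r]{\mathbf D/X}$ and the fact that the universal sections $t_i$ are isomorphisms off $\mathcal D$. Your proposal contains no substitute for this rigidity input.

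Two further points. First, your automorphism argument only gets that the relevant map from $Y$ (or $Z$) to $G$ is locally constant; to conclude it equals the identity you need that it commutes with the action $Y\times G\to Y$ together with the fact that $G$ acts freely on $Y\setminus\pi^{-1}(D)$, the complement of a divisor --- connectedness plus finiteness of $G$ alone do not suffice. Second, you have misplaced the abelian hypothesis: Proposition \ref{prop:hom} is proved in the paper for an arbitrary finite group scheme $G$; abelianness is needed only for the converse direction (from torsors on $\mathfrak X$ back to tamely ramified torsors), via Proposition \ref{prop:alper_crit_torsors_local}, which is exactly where Rydh's counterexamples live.
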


The proof of Proposition \ref{prop:hom} will be given 
after two auxiliary lemmas.
We deduce first the following straight-forward consequence of it.

\begin{corollary}
\label{cor:tame2stack}
There exists a $X$-morphism $Y\longrightarrow \mathfrak X$ that is a $G$-torsor, and it is unique up to unique isomorphism.
\end{corollary}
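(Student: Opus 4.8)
The plan is to deduce Corollary \ref{cor:tame2stack} from Proposition \ref{prop:hom} by a descent argument. First I would recall what Proposition \ref{prop:hom} gives us: the fppf stack $\BHom^G_X(Y,\mathfrak X)$ over $X$, whose objects over an $X$-scheme $T$ are the $X$-morphisms $Y_T\longrightarrow \mathfrak X_T$ that are $G$-torsors, is isomorphic to $X$ itself via the structure morphism. In particular this stack is representable by $X$, so it has an essentially unique object over $X$: the identity section $X\longrightarrow \BHom^G_X(Y,\mathfrak X)$. Unwinding the definition of this stack, such a section is precisely the datum of an $X$-morphism $Y\longrightarrow \mathfrak X$ that is a $G$-torsor.

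Concretely, the key steps are: (1) observe that since $\BHom^G_X(Y,\mathfrak X)\longrightarrow X$ is an isomorphism of stacks, the groupoid of sections of this morphism over $X$ is equivalent to the groupoid of sections of $\id_X$, which is trivial — it has a single object and only the identity automorphism; (2) translate the existence of a section into the existence of an $X$-morphism $Y\longrightarrow \mathfrak X$ which is a $G$-torsor (this is just the definition of the stack $\BHom^G_X(Y,\mathfrak X)$); (3) translate the triviality of the automorphism group and the uniqueness of the object into the statement that this $G$-torsor morphism is unique up to unique isomorphism, where by ``isomorphism'' one means an isomorphism of $G$-torsors over $\mathfrak X$ compatible with the maps from $Y$. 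Steps (1)–(3) are essentially formal once Proposition \ref{prop:hom} is in hand.

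I expect there to be essentially no obstacle here — this really is a ``straight-forward consequence,'' as the text says. The only mild subtlety worth a sentence is making sure the morphism $\BHom^G_X(Y,\mathfrak X)\longrightarrow X$ being an isomorphism of fppf stacks (rather than merely, say, having trivial fibers fppf-locally) is what licenses passing from a local statement to the global existence of a section; but this is exactly the content of Proposition \ref{prop:hom} as stated, so no extra work is needed. Thus the proof is simply: apply Proposition \ref{prop:hom}, take the section corresponding to $\id_X$ under the isomorphism, and read off the desired $G$-torsor $Y\longrightarrow \mathfrak X$ together with its uniqueness.
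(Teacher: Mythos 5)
Your argument is correct and is exactly how the paper treats this statement: the corollary is presented as an immediate consequence of Proposition \ref{prop:hom}, obtained by taking the global section of the isomorphism $\BHom^G_X(Y,\mathfrak X)\longrightarrow X$ and unwinding the definition of the stack. No further commentary is needed.
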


\begin{remark}
\label{rem:tame2stack}\mbox{}
\begin{enumerate}
\item It follows that $[Y/G]\simeq \mathfrak X$. 
\item On the other hand, start from $X$, a normal scheme, and $Y\longrightarrow X$ a $G$-Galois cover, tamely ramified along a normal crossings divisor $D$, in the sense of \cite[D\'efinition 2.2.2]{gm:tame}). Then Abhyankar's lemma (\cite[Theorem 2.3.2]{gm:tame}) shows that the quotient stack is Olsson's refined stack of roots (in the sense of \cite[Definition 4.12]{bv:par_sheaves}), which coincides with the usual stack of roots if and only if the branch divisor $D$ is simple normal crossings. It follows that if $Y\longrightarrow X$ is a tamely ramified torsor under the constant group scheme $G$, in the sense of Definition \ref{def:tame_torsor}, then $D$ is a simple normal crossings divisor. The converse is true at least if $G$ is abelian, as follows from \cite[Lemme 3.3.1]{bor:rep} and Proposition \ref{prop:equiv}.
\end{enumerate}
\end{remark}

Here is the first result we will need to prove Proposition \ref{prop:hom}.

\begin{lemma}
	\label{lem:quot}
	Let $Z\,\longrightarrow\, Y$ be a scheme morphism equivariant with respect to a group 
	monomorphism $\psi : H\longrightarrow G$. 
	Then the canonical diagram
	\[
	\xymatrix{
	G\times^H Z \ar[r] \ar[d] & Y \ar[d]\\
	[Z/H] \ar[r] & [Y/G]
	}
\]
is Cartesian.
\end{lemma}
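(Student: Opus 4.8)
The plan is to verify the Cartesian property by checking the universal property on $T$-points for an arbitrary scheme (or algebraic space) $T$. A $T$-point of the fiber product $[Z/H]\times_{[Y/G]}Y$ consists of a $T$-point of $[Z/H]$, i.e.\ an $H$-torsor $P\to T$ together with an $H$-equivariant map $P\to Z$; a $T$-point of $Y$, i.e.\ a morphism $T\to Y$; and an identification in $[Y/G]$ of the two induced $T$-points. The composite $[Z/H]\to[Y/G]$ sends $(P\to T, P\to Z)$ to the $G$-torsor $G\times^H P\to T$ together with the $G$-equivariant map $G\times^H P\to G\times^H Z\to Y$ (the last arrow induced by $\psi$-equivariance of $Z\to Y$). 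The composite $Y\to[Y/G]$ sends $T\to Y$ to the trivial $G$-torsor $G\times T\to T$ with its tautological equivariant map to $Y$. So the matching datum is an isomorphism of $G$-torsors $G\times^H P\xrightarrow{\sim} G\times T$ compatible with the maps to $Y$.

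First I would unwind what such a trivialization of $G\times^H P$ means: a $G$-equivariant isomorphism $G\times^H P\simeq G\times T$ is the same as a section of $G\times^H P\to T$, and since $H\hookrightarrow G$ is a monomorphism, the fiber of $G\times^H P\to[P/H]$ — equivalently the preimage of the $H$-torsor $P$ inside $G\times^H P$ — recovers $P$; concretely $P\hookrightarrow G\times^H P$ is the sub-$H$-torsor $\{1\}\times^H P$, and a $G$-equivariant trivialization picks out a section of $G\times^H P$, which up to the $H$-action lands in this copy of $P$, hence trivializes $P$ itself. Thus the data of $(P, P\to Z)$ together with a $G$-trivialization of $G\times^H P$ is equivalent to the data of a trivialization $T\xrightarrow{\sim}P$ together with a map $P\to Z$, i.e.\ simply a morphism $T\to Z$. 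The compatibility with the maps to $Y$ then says precisely that the composite $T\to Z\to Y$ equals the given $T\to Y$. Hence $T$-points of the fiber product are the same as pairs (a morphism $T\to Z$, a morphism $T\to Y$ agreeing on the nose after composing $T\to Z\to Y$), which is exactly $\Hom(T, Z)$. On the other side, $T$-points of $G\times^H Z$: a map $T\to G\times^H Z$ is an $H$-torsor over $T$ (pullback of $Z\to[Z/H]$... ) — more directly, $G\times^H Z\to[Z/H]$ has a canonical map to $Y$, and I claim $G\times^H Z$ fits in its own Cartesian square, but it is cleaner to observe that the map $Z\to G\times^H Z$, $z\mapsto(1,z)$, exhibits $G\times^H Z$ as $Z$ glued along $H$, so that $\Hom(T,G\times^H Z)$ together with the projection to $[Z/H]$ and the map to $Y$ also reduces to $\Hom(T,Z)$ in the same way. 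Comparing the two descriptions, and checking they are compatible with the structure maps of the square, gives the result.

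A cleaner route, which I would actually write up, avoids re-deriving $G\times^H Z$ by hand: the square factors as
\[
\xymatrix{
G\times^H Z \ar[r]\ar[d] & Y\ar[d]\\
[Z/H]\ar[r] & [Y/G]
}
\]
and one can use that $Y\to[Y/G]$ is the universal $G$-torsor, so the right-hand vertical arrow is a $G$-torsor; pulling it back along $[Z/H]\to[Y/G]$ yields a $G$-torsor over $[Z/H]$, namely the one classified by the $G$-equivariant-map datum, which is exactly $G\times^H(\text{universal }H\text{-torsor over }[Z/H])$; but the universal $H$-torsor over $[Z/H]$ is $Z\to[Z/H]$, and its associated $G$-torsor is $G\times^H Z$. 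So the pullback of $Y\to[Y/G]$ along $[Z/H]\to[Y/G]$ is canonically $G\times^H Z\to[Z/H]$, which is the assertion, and the map from this pullback to $Y$ is the canonical $G$-equivariant map $G\times^H Z\to Y$ induced by the $\psi$-equivariant $Z\to Y$.

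The main obstacle — really the only nontrivial point — is the bookkeeping in identifying the pulled-back $G$-torsor with $G\times^H Z$: one must check that the $G$-torsor over $[Z/H]$ classified by the tautological map $[Z/H]\to[Y/G]$ is indeed the associated-bundle construction applied to the tautological $H$-torsor $Z\to[Z/H]$, and that the induced map to $Y$ is the expected one. This is a direct consequence of the fact that $[Z/H]\to[Y/G]$ is, by construction of quotient stacks, the morphism classifying $(Z\to[Z/H],\ Z\to Y)\mapsto (G\times^H Z\to[Z/H],\ G\times^H Z\to Y)$, so once the definitions are spelled out there is nothing left to prove; the monomorphism hypothesis on $\psi$ is in fact not needed for this lemma, but I would keep it as stated since that is the only case used.
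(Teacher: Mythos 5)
Your ``cleaner route'' is correct and is essentially the paper's own argument: the proof in the paper simply observes that the diagonal action of $H$ on $G\times Z$ is free, so that the left vertical arrow $G\times^H Z\to [Z/H]$ is a $G$-torsor, and then the identification with the pullback of the universal $G$-torsor $Y\to [Y/G]$ along $[Z/H]\to[Y/G]$ is exactly the bookkeeping you describe (the composite classifies the extension of structure group of the tautological $H$-torsor $Z\to[Z/H]$). If you write up that version, you are done.

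However, two points need correction. First, your $T$-point argument contains a genuine error: a $G$-equivariant trivialization of $G\times^H P$ does \emph{not} trivialize $P$. A section of $G\times^H P\to T$ need not land in the sub-$H$-torsor $\{1\}\times^H P$, and indeed the kernel of $\h^1(T,H)\to \h^1(T,G)$ is nontrivial in general, so $P$ can be a nontrivial $H$-torsor whose induced $G$-torsor is trivial. Consequently the $T$-points of the fiber product are $\Hom(T,G\times^H Z)$, not $\Hom(T,Z)$ --- which is consistent with the statement of the lemma, whose right-hand answer is $G\times^H Z$ and not $Z$. The parallel claim that $\Hom(T,G\times^H Z)$ ``also reduces to $\Hom(T,Z)$'' is false for the same reason, so the two mistakes do not legitimately cancel. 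Second, your parenthetical remark that the monomorphism hypothesis on $\psi$ is not needed is exactly where the paper uses it: injectivity of $\psi$ is what makes the diagonal $H$-action on $G\times Z$ free, hence makes $G\times^H Z=[(G\times Z)/H]$ a scheme (equivalently, makes the left vertical arrow a $G$-torsor with representable total space); without it the fiber product is only the quotient \emph{stack} $[(G\times Z)/H]$, and one would have to reinterpret $G\times^H Z$ as an fppf quotient sheaf for the statement to survive.
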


\begin{proof}
Since the diagonal action of $H$ on $G\times Z$ is free, the left vertical arrow is a $G$-torsor, and the result follows.
\end{proof}

We will also need the following rigidity lemma.

\begin{lemma}
\label{lem:rigidity}
Let $\phi$ be an automorphism of $\mathfrak X$ above $X$. There exists a unique isomorphism $\phi\simeq \id$.
\end{lemma}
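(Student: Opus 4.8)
The plan is to prove the rigidity lemma locally on $X$ and then glue, using the fppf descent available since $\mathfrak X\longrightarrow X$ is an fppf (indeed fpqc) gerbe-like morphism. First I would reduce to the affine situation: by Lemma \ref{lem:kum-quot} and Remark \ref{rem:kum-quot}, after passing to an affine open $U=\spec R\subset X$ containing a given closed point and shrinking so that $U$ meets only branches through that point, we have $U\times_X\mathfrak X\simeq [Z/\mu_{\mathbf r}]$ with $Z=\spec R[\mathbf t]/(\mathbf t^{\mathbf r}-\mathbf s)$. An automorphism $\phi$ of $\mathfrak X$ over $X$ restricts to an automorphism of $[Z/\mu_{\mathbf r}]$ over $\spec R$.

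Next I would identify $\mathrm{Aut}_{[Z/\mu_{\mathbf r}]/\spec R}$. An automorphism of the quotient stack $[Z/\mu_{\mathbf r}]$ over $\spec R$ corresponds, after pulling back along the atlas $Z\longrightarrow [Z/\mu_{\mathbf r}]$, to a $\mu_{\mathbf r}$-equivariant $R$-automorphism of $Z$ (up to the action of $\mu_{\mathbf r}$), and the key point is that such an automorphism is forced to be the identity: the $\mathbb Z^I/\mathbf r$-grading on $R[\mathbf t]/(\mathbf t^{\mathbf r}-\mathbf s)$ has the property that the degree-$\mathbf e_i$ piece is the free rank-one $R$-module generated by $t_i$, so any graded $R$-algebra automorphism sends $t_i$ to $u_i t_i$ with $u_i\in R^\times$; imposing $t_i^{r_i}=s_i$ gives $u_i^{r_i}=1$. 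This shows $\mathrm{Aut}_{[Z/\mu_{\mathbf r}]/\spec R}$ is the ``inertia-like'' sheaf which, as a sheaf on $(\spec R)_{\mathrm{fppf}}$, is exactly $\mu_{\mathbf r}$ (these $u_i$ are precisely the $2$-automorphisms coming from the $\mu_{\mathbf r}$-action and are canonically trivialised once one remembers the extra data of the universal roots $t_i$ — equivalently, an automorphism of $\mathfrak X$ over $X$ must respect the universal $r_i$-th root $(\mathcal N_i,t_i)$ of $(\mathcal O_X(D_i),s_{D_i})$, and an automorphism of a root of a line-bundle-with-section is rigid). So locally $\phi$ is isomorphic to $\id$, and moreover the isomorphism is unique because $\mathrm{Aut}(\id_{\mathfrak X})$ is trivial — the only automorphism of the universal data is the identity.

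The cleanest way to organize the argument is therefore to work intrinsically rather than with explicit coordinates: an object of $\mathfrak X$ over an $X$-scheme $T$ is a collection $(\mathcal N_i,t_i)$ of $r_i$-th roots of $(\mathcal O_T(D_{i,T}),s_{D_i})$, and $\phi$ sends it to another such collection $(\mathcal N_i',t_i')$; I would show there is a unique isomorphism $(\mathcal N_i,t_i)\xrightarrow{\sim}(\mathcal N_i',t_i')$, functorially in $T$. Existence: both $\mathcal N_i$ and $\mathcal N_i'$ are $r_i$-th roots of the same $(\mathcal O_X(D_i),s_{D_i})$ pulled back to $T$; the sheaf $\mathbf{Isom}$ of such isomorphisms is a torsor under $\mu_{r_i}$ over $T$, and it is canonically trivialised by the requirement of compatibility with $t_i$ and $t_i'$ — locally where $D_i$ is cut out by a nonzerodivisor, $t_i$ and $t_i'$ are local generators mapping to the same section of $\mathcal O_T(D_i)$, so there is a unique local isomorphism carrying one to the other, and these glue. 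Uniqueness: the difference of two such isomorphisms would be an automorphism of $(\mathcal N_i,t_i)$, i.e. multiplication by a section of $\mu_{r_i}$ fixing the nowhere-zero-on-$D_i$-complement section $t_i$, hence $1$. Assembling over $i\in I$ gives the desired unique $\phi\simeq\id$.

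The main obstacle is a matter of care rather than depth: one must argue that the local isomorphisms $\phi\simeq\id$ are canonical enough to glue to a global one, which is exactly why the uniqueness clause matters — uniqueness of the local isomorphisms makes the gluing cocycle condition automatic, so a global isomorphism exists, and uniqueness globally follows from uniqueness locally. A secondary subtlety is to make sure the identification of the local automorphism sheaf really is trivial and not merely $\mu_{\mathbf r}$: this is where one genuinely uses that $\phi$ lies over $X$ \emph{and} that the tautological sections $t_i$ are part of the structure of $\mathfrak X$, pinning down the would-be $\mu_{\mathbf r}$-ambiguity.
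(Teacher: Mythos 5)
Your uniqueness argument is essentially the paper's: an isomorphism of roots carrying $t_i$ to $t'_i$ is determined on the complement $\mathfrak U=\mathfrak X\setminus\mathcal D$, where the $t_i$ are invertible, and $\mathcal O_{\mathfrak X}\to j_*\mathcal O_{\mathfrak U}$ is a monomorphism because $\mathcal D$ is Cartier. That part is fine. The existence argument, however, has a genuine gap. You assert that ``$t_i$ and $t_i'$ are local generators'' of $\mathcal N_i$ and $\mathcal N_i'$, so that there is a unique local isomorphism carrying one to the other. But $t_i$ satisfies $t_i^{r_i}=s_{D_i}$, and $s_{D_i}$ vanishes along $D_i$; hence $t_i$ vanishes along $\mathcal D_i$ and is \emph{not} a generator of $\mathcal N_i$ at any point of $\mathcal D_i$ --- which is exactly the locus where the lemma has content (over $\mathfrak U$ the root stack is just $X\setminus D$ and everything is trivially rigid). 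For the same reason your claim that the compatibility with the sections ``canonically trivialises'' the $\mu_{r_i}$-torsor of isomorphisms is circular: that the relevant subsheaf of the Isom-sheaf is nonempty near $\mathcal D_i$ is precisely the existence statement to be proved. Indeed, two arbitrary $r_i$-th roots of $(\mathcal O_X(D_i),s_{D_i})$ over a base need not be locally isomorphic over the zero locus of $s_{D_i}$ (roots of a pair $(\mathcal L,0)$ form a $\mu_{r_i}$-gerbe's worth of non-isomorphic objects), so any correct proof must use that $(\mathcal N_i',t_i')=\phi^*(\mathcal N_i,t_i)$ for an automorphism $\phi$ over $X$ --- your existence step never does.

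The missing idea, which is how the paper proceeds, is to realise the universal root geometrically as $(\mathcal O_{\mathfrak X}(\mathcal D_i),s_{\mathcal D_i})$ for the effective Cartier divisor $\mathcal D_i$, and to show that $\phi$ preserves $\mathcal D_i$: since $\phi|_{\mathfrak U}=\id$, the automorphism fixes each component topologically, hence $\phi^{-1}(\mathcal D_i)=\mathcal D_i$ as reduced closed substacks, which yields a canonical isomorphism $\phi^*\mathcal O_{\mathfrak X}(\mathcal D_i)\simeq\mathcal O_{\mathfrak X}(\mathcal D_i)$ preserving the canonical sections, i.e.\ the desired isomorphism $(\mathcal N_i',t_i')\simeq(\mathcal N_i,t_i)$. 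Your opening coordinate computation on the Kummer algebra could also be pushed through, but as written it leaves unaddressed both the passage between automorphisms of the quotient stack $[Z/\mu_{\mathbf r}]$ and equivariant automorphisms of the atlas $Z$ (not every stack automorphism lifts globally) and the gluing of the resulting local $2$-isomorphisms; you rightly abandon it, but the ``intrinsic'' replacement does not close the existence gap.
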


\begin{proof}
	The data of the automorphism $\phi$ amounts to the data, for each $i\in I$, of another universal $r_i$-th root 
$(\mathcal N'_i,t'_i)$ of $(\mathcal O_X(D_i),s_{D_i})$. An isomorphism from $\phi$ to $\id$ is an isomorphism of $r_i$-th roots from
$(\mathcal N'_i,t'_i)$ to $(\mathcal N_i,t_i)$, for each $i\in I$.

Denote by $\mathfrak U\,= \,\mathfrak X\backslash \mathcal D$ the complement of
$\mathcal D$, and denote by $j\,:\, \mathfrak U\,\longrightarrow\, X$ the corresponding
open immersion. Since $\mathcal D$ is a Cartier divisor, the morphism
$\mathcal O_{\mathfrak X}\longrightarrow j_* \mathcal O_{\mathfrak U}$ is a
monomorphism. Because each $t_i\,:\,\mathcal O_{\mathfrak X}\longrightarrow \mathcal N_i$ (respectively $t'_i:\mathcal O_{\mathfrak X}\longrightarrow \mathcal N'_i$) is an isomorphism when restricted to $\mathfrak U$, uniqueness of isomorphism $\phi\simeq \id$ follows.

To show the existence, we note that since $\phi_{|\mathfrak U}\,=\,\id$, the morphism $\phi$ induces an automorphism $\psi$ of the reduced stack $\mathcal D$. Since $\mathcal O_{\mathfrak X}(-r_i \mathcal D_i)\simeq \pi^*\mathcal O_{ X}(- D_i)$, this automorphism preserve topologically the irreducible components, which means that $\phi^{-1}( \mathcal D_i)=\mathcal D_i$ as (reduced) closed substacks.
Thus we get an isomorphism $\phi^*\mathcal O_{\mathfrak X}(\mathcal D_i)\simeq \mathcal O_{\mathfrak X}(\mathcal D_i)$ preserving the canonical sections.
As $(\phi^*\mathcal O_{\mathfrak X}(\mathcal D_i),\phi^* s_{\mathcal D_i})\simeq (\mathcal N'_i,t'_i)$ and $(\mathcal O_{\mathfrak X}(\mathcal D_i), s_{\mathcal D_i})\simeq (\mathcal N_i,t_i)$, this concludes the proof.
\end{proof}

\begin{remark}
\label{rem:rigidity}
In fact, uniqueness is not needed in the proof of Proposition \ref{prop:hom}.
\end{remark}

\begin{proof}[Proof of Proposition \ref{prop:hom}]
We first show that $\BHom^G_X(Y,\mathfrak X)\longrightarrow X$ is an epimorphism. 
Let us thus prove that this morphism has sections in a fppf neighbourhood of $x\,\in \,
X$. We use notations of Definition \ref{def:tame_torsor} and Remark \ref{rem:def-tame-torsor} \eqref{rem:def-tame-torsor-detail}. In particular by Lemma \ref{lem:quot}, we get an $X'$-isomorphism $[Y'/G]\simeq [Z'/\mu_{\mathbf r_x}]$. 
We can assume that $U$ meets only components containing $x$. Then Remark \ref{rem:kum-quot} gives and $X'$-isomorphism $[Z'/\mu_{\mathbf r_x}]\,\simeq\, \mathfrak X'$, where
$\mathfrak X'\,=\,X'\times_X \mathfrak X$. So we get a local $X'$-morphism $Y'\longrightarrow \mathfrak X'$ that is a $G$-torsor.

We conclude the proof by showing that $\BHom^G_X(Y,\mathfrak X)\longrightarrow X$ is fully faithful. Since the same argument applies locally, we will prove that there is exactly one (iso)morphism between two global objects $q,q'\in \Hom^G_X(Y,\mathfrak X)$.
Because $q$ (respectively $q'$) is a $G$-torsor, it is $X$-isomorphic to $Y\longrightarrow [Y/G]$. By composing these isomorphisms, one obtains a $2$-commutative diagram~:

\newcommand{\eq}[1][r]
 {\ar@<-3pt>@{-}[#1]
  \ar@<-1pt>@{}[#1]|<{}="gauche"
  \ar@<+0pt>@{}[#1]|-{}="milieu"
  \ar@<+1pt>@{}[#1]|>{}="droite"
  \ar@/^2pt/@{-}"gauche";"milieu"
    \ar@/_2pt/@{-}"milieu";"droite"}
\[
	\xymatrix{
	& \mathfrak X\ar[rd] \eq[dd]&\\
	Y\ar[ru]^{q}\ar[rd]_{q'} && X \\
					  & \mathfrak X\ar[ru]&\\
	}
\]
By Lemma \ref{lem:rigidity}, one gets a $X$-isomorphism $q\simeq q'$. So $\BHom^G_X(Y,\mathfrak X)\longrightarrow X$ is full. 

To show it is faithful, let us fix $q: Y\longrightarrow \mathfrak X $ an object of $\BHom^G_X(Y,\mathfrak X)$.
We will check that $\Aut_X(q)$ is trivial. 
Since $q: Y\longrightarrow \mathfrak X $ is isomorphic to $q_0: Y \longrightarrow [Y/G]$, it is enough to show that $\Aut_X(q_0)$ is trivial. An automorphism of $q_0$ corresponds to an automorphism of the trivial right $G$-torsor $\pr_2:Y\times G\longrightarrow Y$ commuting with the action $a:Y\times G\longrightarrow Y$. Such an automorphism is given by $g_0\in G(Y)$ acting on the left of the second factor of $Y\times G$, and it commutes with $a$ if and only if $g_0$ acts trivially on $Y$. But $G$ acts freely, hence faithfully, on $Y\backslash\pi^{-1}(D)$, which is the complement of a divisor, hence $g_0=1$.
\end{proof}

\subsection{From torsors on the stack of roots to tamely ramified torsors}
\label{sec:stacky-to-tame}
The right context for most results in this section is the world of \emph{tame stacks}, in the sense of \cite{aov:tame_stacks}. Stack of roots are tame. The letter $\mathfrak X$ will mainly denote a tame stack, that we will always suppose locally noetherian.

The key result is the following variant of a result of A. Alper (\cite[Theorem 10.3]{alper:good_moduli}).

\begin{proposition}
\label{prop:alper_crit_torsors_local}
Let $G/S$ be an abelian affine group scheme of finite type, $\mathfrak X$ be a tame stack with good moduli space $X$ and consider two $G$-torsors $Y_i\longrightarrow \mathfrak X $ for $i\in\{ 1,2\}$. Assume that for a closed point $x\in \left|\mathfrak X\right|_0$, the restrictions to the residual gerbe ${Y_1}_{|\mathcal G_x}\longrightarrow \mathcal G_x$ and ${Y_2}_{|\mathcal G_x}\longrightarrow \mathcal G_x$ are isomorphic. Then there exists a fppf neighbourhood $X'\longrightarrow X$ of $\pi(x)$ in $X$ such that ${Y_1}$ and ${Y_2}$ are isomorphic over $X'\times_X \mathfrak X$.
\end{proposition}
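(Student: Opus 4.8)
The plan is to reduce the statement to a descent problem for the difference of the two torsors, and then exploit the fact that, for a tame stack $\mathfrak X$ with good moduli space $X$, the pushforward $\pi_*$ is exact and commutes with base change, so that quasi-coherent data on $\mathfrak X$ descends to $X$ after a suitable fppf cover. Since $G$ is abelian, the sheaf of groups $\BIsom_{\mathfrak X}(Y_1,Y_2)$ of $G$-equivariant isomorphisms is itself a torsor under the abelian group $G_{\mathfrak X}$; concretely, twisting $Y_1$ by the inverse of $Y_2$ we may assume $Y_2$ is the trivial torsor, and the task becomes: a $G$-torsor $Y\longrightarrow \mathfrak X$ that is trivial over the residual gerbe $\mathcal G_x$ becomes trivial fppf-locally over $X$ near $\pi(x)$.

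First I would observe that a $G$-torsor on $\mathfrak X$ is, by rigidity of affine group schemes and the fact that $\mathfrak X$ is tame, the same as a morphism $\mathfrak X \longrightarrow \B_S G$, equivalently (since $G$ is abelian, so $\B_S G$ is again a group stack) an element of $\h^1_{\mathrm{fppf}}(\mathfrak X, G)$. Next I would use Alper's structure theory: near $\pi(x)$ the stack $\mathfrak X$ is, fppf-locally (in fact étale-locally after the residue field extension, by the local structure theorem for tame stacks, \cite{aov:tame_stacks}), a quotient $[\spec A / \Gamma]$ with $\Gamma$ finite linearly reductive and $X = \spec A^\Gamma$ locally; the residual gerbe $\mathcal G_x$ is then $\B_{k(x)}\Gamma_x$ for a subgroup $\Gamma_x$ (this is exactly the content of Lemma \ref{lem:residual-gerbe} in the stack-of-roots case, where $\Gamma_x = \mu_{\mathbf r_x}$). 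Pulling $Y$ back along $\mathcal G_x \hookrightarrow \mathfrak X$ gives a $\Gamma_x$-equivariant $G$-torsor on a point, i.e. a homomorphism $\Gamma_x \longrightarrow G$ up to conjugacy (here one crucially uses $G$ abelian, so "up to conjugacy" is "on the nose"); the hypothesis says this homomorphism is trivial. I would then show that the obstruction to trivializing $Y$ over an fppf cover of $X$ lives in a group built from $\h^*(\mathcal G_x, G)$ together with the deformation theory of torsors along $\mathfrak X \to X$, and that the vanishing over $\mathcal G_x$ kills it — the point being that, $\pi$ being cohomologically affine with $\pi_*$ exact, the Leray spectral sequence for $\pi$ degenerates enough that $\h^1_{\mathrm{fppf}}(\mathfrak X, G)$ is controlled, fppf-locally on $X$, by its restriction to the fibres, and the closed fibre over $\pi(x)$ is detected by the residual gerbe because $G$ is linearly reductive over the (tame) base direction.

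Concretely, here is the sequence of steps I would carry out. (1) Reduce to $Y_2$ trivial by twisting, using that $\BIsom(Y_1,Y_2)$ is a $G$-torsor since $G$ is abelian and central. (2) Base-change to an fppf (étale after a field extension) cover $X' \to X$ of $\pi(x)$ over which $\mathfrak X' = \mathfrak X \times_X X'$ is $[\spec A/\Gamma]$ with $\Gamma$ finite linearly reductive, $\spec A^\Gamma = X'$; here a $G$-torsor on $\mathfrak X'$ is a $\Gamma$-equivariant $G$-torsor on $\spec A$, i.e. (deframing) a $\Gamma$-equivariant map $\spec A \to \B G$. (3) Restrict to the closed point: over the unique closed $\Gamma$-orbit lying over $\pi(x)$, which (after possibly enlarging $X'$) is $\B_{k(x)}\Gamma_x$, the torsor is classified by $\Hom(\Gamma_x, G)/{\sim}\,$, and since $G$ is abelian this is just $\Hom(\Gamma_x, G)$; our hypothesis says the class is $0$. (4) Lift: because $\Gamma$ is linearly reductive, $\Gamma$-equivariant deformation theory over $\spec A$ of the trivial torsor is unobstructed and the functor of $\Gamma$-equivariant $G$-torsors is formally smooth over the base, so the triviality at the closed orbit propagates to a formal, then (Artin approximation / fppf-locally) an actual neighbourhood; equivalently, $\h^1_{\mathrm{fppf}}([\spec A/\Gamma], G) \to \h^1_{\mathrm{fppf}}(\B_{k(x)}\Gamma_x, G)$ is injective after a further fppf localization on $\spec A^\Gamma$, which is Alper's \cite[Theorem 10.3]{alper:good_moduli} in this abelian setting. (5) Conclude that $Y$ is trivial over $X'' \times_X \mathfrak X$ for a refined fppf neighbourhood $X''$, hence $Y_1 \cong Y_2$ there.

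The main obstacle is step (4): showing that the isomorphism over the residual gerbe extends, not merely formally but fppf-locally, to a genuine neighbourhood. This is precisely where Alper's theorem (and the tameness/linear reductivity) does the real work, and it is precisely here — as the authors emphasize — that abelianness of $G$ is essential: without it the relevant $\h^1$ is a non-abelian pointed set, "up to conjugacy" is a genuine equivalence relation, and the restriction map to the residual gerbe need not be injective, which is the source of Rydh's counterexamples in Appendix \ref{sec:David_Rydh_counter_example}. I would therefore present step (4) as an application of the cited result of Alper rather than reprove it, checking only that the abelian hypothesis lets us replace "isomorphism class of torsor with reduction of structure group" by the plain cohomology class, and that the residual gerbe $\mathcal G_x$ (by Lemma \ref{lem:residual-gerbe} / \cite[Tag 06MT]{stacks-project}) really is the closed substack detecting the fibre.
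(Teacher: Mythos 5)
Your step (1) is exactly the paper's reduction: since $G$ is abelian, $\BIsom_G(Y_1,Y_2)$ is itself a $G$-torsor on $\mathfrak X$, trivial over $\mathcal G_x$ by hypothesis, so everything rests on the single-torsor statement (the paper's Proposition \ref{prop:alper_crit_torsors_single}): a $G$-torsor $Y\longrightarrow\mathfrak X$ trivial over $\mathcal G_x$ is trivial over $X'\times_X\mathfrak X$ for some fppf neighbourhood $X'$ of $\pi(x)$. It is in your treatment of this statement, your step (4), that there is a genuine gap. You defer the crux to ``Alper's \cite[Theorem 10.3]{alper:good_moduli} in this abelian setting'', asserting that it gives injectivity of $\h^1_{\mathrm{fppf}}([\spec A/\Gamma],G)\longrightarrow \h^1_{\mathrm{fppf}}(\B_{k(x)}\Gamma_x,G)$ after fppf localization. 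But Alper's Theorem 10.3 is a statement about \emph{vector bundles}: $\pi^*$ is an equivalence from $\Vect(X)$ onto the full subcategory of bundles on $\mathfrak X$ whose restriction to \emph{every} residual gerbe is trivial. The passage from this to a descent statement for $G$-torsors is precisely the nontrivial content here (the paper even calls its proposition a ``variant'' of Alper's result), and it is carried out via the Tannakian interpretation: one views $Y$ as a tensor exact functor $F_Y:\Rep_k G\longrightarrow\Vect\mathfrak X$, checks that $F_Y$ lands in the subcategory $\Vect(\mathfrak X)_{triv}$, and uses exactness of $\pi_*$ (tameness) to see that $\pi_*\circ F_Y$ is again tensor exact, i.e.\ that the torsor descends to $X$. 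None of this appears in your sketch. Moreover, since Alper's equivalence requires triviality at \emph{all} closed points, one must first propagate triviality from the single point $x$ to all closed points of an open neighbourhood; the paper does this via a tensor generator $W$ of $\Rep_k G$ and an openness lemma (triviality of $\mathcal E_{|\mathcal G_{x'}}$ is equivalent to surjectivity of $\pi^*\pi_*\mathcal E\longrightarrow\mathcal E$ at $x'$, an open condition). Your argument restricts only to the one closed orbit over $\pi(x)$ and never addresses this step.

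Your proposed alternative for step (4) --- that ``$\Gamma$-equivariant deformation theory of the trivial torsor is unobstructed'' because $\Gamma$ is linearly reductive, followed by Artin approximation --- does not work as stated. Linear reductivity of the \emph{stabilizer} $\Gamma$ says nothing about the deformation theory of torsors under the \emph{structure group} $G$, which in the intended applications is typically non-smooth ($\mu_n$ in characteristic dividing $n$, or $\alpha_p$); the stack of $G$-torsors is then not formally smooth and unobstructedness is not automatic. Indeed, Rydh's counterexamples in Appendix \ref{sec:David_Rydh_counter_example} live exactly in this non-smooth regime and show how delicate the propagation step is: the failure for non-abelian $G$ is not merely a matter of ``conjugacy versus equality'' of homomorphisms $\Gamma_x\to G$, but a failure of the local-to-global step itself, so your step (4) is the place where the abelian hypothesis must enter through an actual argument rather than a citation.
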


\begin{remark}
\label{rem:rydh}
As David Rydh informed us, the Proposition is wrong if we don't assume $G$ to be abelian, even if $G$ is finite. We reproduce his counter-examples in Appendix \ref{sec:David_Rydh_counter_example}.
\end{remark}

Before proving this proposition, we now conclude the proof of Proposition \ref{prop:equiv}. So for a short while, the letter $\mathfrak X$ will again denote a stack of roots. Let $ Y \longrightarrow \mathfrak X$ be a $G$-torsor, where $\mathfrak X$ is the stack of roots, $G$ is an abelian finite group scheme, and $Y$ is a scheme. We have to show that \emph{fppf} locally on $X$, the morphism $Y\longrightarrow X$ to the moduli space is induced by a Kummer cover with ramification data $(\mathbf D,\mathbf r)$ along a group monomorphism defined on some extension $k'/k$.

Let $x\,\in\, \mathfrak X$ be a closed point. Since we can make an arbitrary base 
change to show the result, we can assume that $k(x)=k$. In a Zariski neighbourhood 
of $x$, we can write $\mathfrak X\,\simeq\, [Z/H]$, where $H\,=\,\mu_{\mathbf r_x}$, and 
$Z\longrightarrow X$ is a $H$-Kummer cover, with ramification data $(\mathbf 
D,\mathbf r)$ (see Remark \ref{rem:kum-quot}). This implies that $\mathcal G_x\simeq 
\B_{k(x)} H= \B H$, and that the monomorphism $i:\B H \longrightarrow \mathfrak X$ 
is a section of $Z: \mathfrak X \longrightarrow \B H $. Now, after enlarging $k$ 
again, we can assume that the composition $\B H\,\longrightarrow\, \mathfrak X 
\,\xrightarrow{Y}\, \B G$ comes from a group morphism $H\longrightarrow G$, that 
must be a monomorphism, since we assume that $Y$ is a scheme (hence $Y$ as a 
morphism $\mathfrak X \longrightarrow \B G$ is representable, and so is $\B H 
\longrightarrow \B G$).

We can finish the proof by considering the two $G$-torsors on $\mathfrak X$ given by $Y_1=Y$ and $Y_2= Y\circ i \circ Z$ (notice that $Y_2$ is just the torsor induced by $Z\longrightarrow \mathfrak X$ along the monomorphism $H\longrightarrow G$). Indeed, we have 
${Y_1}_{|\mathcal G_x}\simeq {Y_2}_{|\mathcal G_x}$
by construction, and Proposition \ref{prop:alper_crit_torsors_local} ensures that $Y_1$ and $Y_2$ are isomorphic over a \emph{fppf} neighbourhood of $\pi(x)$ in $X$. 

\begin{proof}[Proof of Proposition \ref{prop:alper_crit_torsors_local}]
Since $G$ is abelian, the sheaf $\BIsom_G(Y_1,Y_2)$ on $\mathfrak X$ is a 
$G$-torsor and the following proposition allows to conclude.
\end{proof}

\begin{proposition}
\label{prop:alper_crit_torsors_single}
Let $G/S$ be a group scheme of finite type, $\mathfrak X$ be a tame stack with moduli space $X$ and let $Y\longrightarrow \mathfrak X$ be a $G$-torsor. Assume that for a closed point $x\in \left|\mathfrak X\right|_0$, the restriction $Y_{|\mathcal G_x}\longrightarrow \mathcal G_x$ is trivial. Then there exists a fppf neighbourhood $X'\longrightarrow X$ of $\pi(x)$ in $X$ such that $Y\longrightarrow \mathfrak X$ is trivial over $X'\times_X \mathfrak X$.
\end{proposition}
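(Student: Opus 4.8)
The goal is to show that a $G$-torsor $Y\to\mathfrak X$ on a tame stack, which is trivial on the residual gerbe $\mathcal G_x$, becomes trivial after an fppf base change on the moduli space. My strategy is to follow Alper's deformation-theoretic argument (\cite[Theorem 10.3]{alper:good_moduli}), but to exploit the much stronger local structure available for tame stacks: \'etale-locally on $X$, a tame stack with good moduli space is a quotient $[\spec A / \Gamma]$ by a finite linearly reductive group scheme $\Gamma$ (the local structure theorem of \cite{aov:tame_stacks}), and moreover one can arrange that the residual gerbe at $x$ is $\B_{k(x)}\Gamma$ sitting over the closed point. So after replacing $X$ by such an \'etale neighbourhood, I may assume $\mathfrak X = [\spec A/\Gamma]$ with $X = \spec A^{\Gamma}$, and that the image of $x$ in $X$ is the point cut out by the maximal ideal $\mathfrak m$ fixed by $\Gamma$, with $\mathcal G_x = [\spec(A/\mathfrak m A) / \Gamma] = \B_{k(x)}\Gamma$ up to the residue field.

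First I would translate the problem into equivariant language: a $G$-torsor on $[\spec A/\Gamma]$ is the same as a $\Gamma$-equivariant $G$-torsor on $\spec A$, i.e. (since $G$ is affine of finite type and $\spec A$ is affine, working fppf-locally to trivialize) given by a $\Gamma$-equivariant structure on the trivial $G$-torsor, which amounts to a cocycle, and an isomorphism class of such is controlled by an $H^1$ of $\Gamma$ with values in $G(A)$ — more precisely by the non-abelian cohomology $H^1(\Gamma, G(A))$, but since $G$ is abelian this is honest group cohomology $H^1(\Gamma, G(A))$. The triviality on $\mathcal G_x$ says exactly that the image of our class in $H^1(\Gamma, G(A/\mathfrak m A)) = H^1(\Gamma, G(k(x)))$ vanishes. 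So the task becomes: show that the restriction map $H^1(\Gamma, G(A)) \to H^1(\Gamma, G(k(x)))$ has trivial kernel after an fppf base change $A^\Gamma \to A'$, i.e. that a class dying at the closed point dies fppf-locally.

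The key leverage is that $\Gamma$ is finite and \emph{linearly reductive}, so taking $\Gamma$-invariants is exact on quasi-coherent $\mathcal O_{\spec A}$-modules, and $H^i(\Gamma, M) = 0$ for $i>0$ when $M$ is such a module with its $A$-linear $\Gamma$-action — this is the tameness. The obstruction to extending/trivializing lives in such higher cohomology of coherent (linearized) modules once one passes from $G$ to its tangent/Lie algebra via the exponential-type filtration of the completion. Concretely, I would argue on $\widehat A = $ completion of $A$ along $\mathfrak m A$: filter $G(\widehat A)$ by the congruence subgroups $G(\widehat A, \mathfrak m^n)$, whose successive quotients are $\mathrm{Lie}(G)\otimes (\mathfrak m^n/\mathfrak m^{n+1})$ as $\Gamma$-modules (here $G$ abelian and smooth is used; for non-smooth finite $G$ one replaces this by the appropriate Hopf-algebraic filtration, and $G$ abelian is what makes these graded pieces $\Gamma$-modules on which $H^{\ge 1}(\Gamma,-)$ vanishes by linear reductivity). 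Since each graded piece is a linearized coherent module, $H^1(\Gamma, -)$ vanishes on it, so the class is trivial over $\widehat A^\Gamma$; then Artin approximation / a standard limit argument descends the trivialization to an \'etale or fppf neighbourhood of $\pi(x)$ in $X$. Finally, I would remember that $\mathcal G_x$-triviality only gives vanishing over the residue field, not over $\widehat A/\mathfrak m$; bridging that gap — lifting the trivialization from the closed point one order at a time — is precisely the inductive step just described, and this is where abelianness of $G$ is essential and where Rydh's counterexamples show the argument genuinely breaks down otherwise.

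**Main obstacle.** The crux is the cohomology-vanishing input: one must genuinely know that the relevant deformation/obstruction groups $H^1(\Gamma, -)$ vanish on the linearized coherent sheaves that appear as graded pieces of the filtration of $G$ by congruence subgroups, and that these graded pieces really are $\Gamma$-\emph{modules} (not just torsors) — which forces $G$ to be abelian. Getting the filtration to interact correctly with the non-smooth, possibly infinitesimal part of $G$ (e.g. $G = \mu_p$ in characteristic $p$), while staying inside the tame/linearly reductive world, and then cleanly running Artin approximation to get an actual fppf (even \'etale) neighbourhood rather than just a formal one, is the technical heart of the proof.
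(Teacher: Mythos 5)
Your plan takes a genuinely different route from the paper, and it has real gaps. The paper's proof is a short Tannakian argument with no deformation theory at all: the torsor is encoded as a tensor exact functor $F_Y\colon \Rep_k G\to \Vect\mathfrak X$; since $G$ is of finite type, $\Rep_k G$ has a tensor generator $W$, and Lemma \ref{lem:restriction_trivial_open} (openness of the locus where $\pi^*\pi_*\mathcal E\to\mathcal E$ is surjective) shows that on an open substack containing $x$ every $\mathcal E_V=F_Y(V)$ is trivial on every residual gerbe; then Alper's theorem (\cite[Theorem 10.3]{alper:good_moduli}) gives an equivalence $\pi^*\colon\Vect(X)\simeq\Vect(\mathfrak X)_{triv}$ with exact tensor inverse $\pi_*$ (exactness by tameness), so $\pi_*\circ F_Y$ is a tensor exact functor on $X$, i.e.\ the torsor descends to the scheme $X$, where it is trivialized by an fppf cover. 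Your approach instead tries to re-derive a local statement of this strength from the local structure theorem plus equivariant deformation theory.

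The concrete problems with your route are these. First, the proposition does not assume $G$ abelian, and it must not: it is the two-torsor comparison (Proposition \ref{prop:alper_crit_torsors_local}) that needs abelianness, while the single-torsor statement holds for arbitrary $G$ of finite type and is precisely what makes the reduction via $\BIsom_G(Y_1,Y_2)$ work; your reduction to group cohomology $H^1(\Gamma,G(A))$ and your congruence filtration whose graded pieces are $\Gamma$-modules both require $G$ commutative, so at best you prove a weaker statement. Second, identifying $G$-torsors on $[\spec A/\Gamma]$ with classes in $H^1(\Gamma,G(A))$ presupposes that the pullback to the chart $\spec A$ is trivial; trivializing it needs an fppf cover of $\spec A$, not of $X=\spec A^\Gamma$, and you do not address how to make that cover equivariant or descend it to a neighbourhood of $\pi(x)$ as the statement requires. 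Third, the technical heart you defer --- ``the appropriate Hopf-algebraic filtration'' for non-smooth $G$ --- is the missing idea rather than a routine substitution: for $\mu_p$ or $\alpha_p$ in characteristic $p$ (exactly the groups relevant here, as in Rydh's examples) the graded pieces of the congruence filtration are not $\mathrm{Lie}(G)\otimes\mathfrak m^n/\mathfrak m^{n+1}$, and fppf $H^1$ with infinitesimal coefficients is not controlled by Lie-algebra cohomology, so the claimed vanishing by linear reductivity of $\Gamma$ is unsupported at the crucial step. The paper sidesteps all of this by working with the associated vector bundles and quoting Alper's descent equivalence for $\Vect(\mathfrak X)_{triv}$.
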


We again postpone the proof of this last proposition, and first state an auxiliary result.

\begin{lemma}
\label{lem:restriction_trivial_open}
Let $\mathfrak X$ be a tame stack, $\mathcal E$ a locally free sheaf of finite rank on $\mathfrak X$. Assume that for a closed point $x\in \left|\mathfrak X\right|_0$, the restriction $\mathcal E_{|\mathcal G_x}$ is trivial. Then there exists an open substack $\mathfrak U\subset \mathfrak X$ such that $x\,\in\,\left|\mathfrak U\right|_0$ and for all $x'\in \left|\mathfrak U\right|_0$, the restriction $\mathcal E_{|\mathcal G_{x'}}$ is trivial.
\end{lemma}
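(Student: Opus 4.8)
The plan is to prove Lemma \ref{lem:restriction_trivial_open} by reducing to a statement about the coherent pushforward $\pi_* \mathcal{E}$ along the good moduli space morphism $\pi : \mathfrak{X} \to X$, together with a rank count on residual gerbes. First I would recall that for a tame stack $\mathfrak{X}$ with good moduli space $\pi : \mathfrak{X} \to X$, the functor $\pi_*$ is exact and its formation commutes with arbitrary base change on $X$ (this is the defining property of tame stacks in \cite{aov:tame_stacks}; see also \cite{alper:good_moduli}). In particular, if $x \in |\mathfrak{X}|_0$ lies over $\pi(x) \in X$, then the fibre $(\pi_* \mathcal{E}) \otimes_{\mathcal{O}_X} k(\pi(x))$ is canonically the space of invariant sections of $\mathcal{E}$ restricted to the fibre $\mathfrak{X}_{\pi(x)}$, and since the residual gerbe $\mathcal{G}_x$ is the reduction of (a gerbe contained in) that fibre, one gets a natural identification of $(\pi_*\mathcal{E})\otimes k(\pi(x))$ with the $k(x)$-vector space $\h^0(\mathcal{G}_x, \mathcal{E}_{|\mathcal{G}_x})^{G_x}$ of ``coinvariants'' — concretely, when $\mathcal{G}_x \simeq \B_{k(x)}\mu_{\mathbf r_x}$, this is the weight-zero graded piece of $\mathcal{E}_{|\mathcal{G}_x}$.

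Next I would set $n = \operatorname{rk} \mathcal{E}$ and use the hypothesis: $\mathcal{E}_{|\mathcal{G}_x}$ trivial means $\mathcal{E}_{|\mathcal{G}_x} \simeq \mathcal{O}_{\mathcal{G}_x}^{\oplus n}$, so its space of invariant sections has dimension exactly $n$. Hence $(\pi_* \mathcal{E}) \otimes_{\mathcal{O}_X} k(\pi(x))$ has dimension $n$. Since $\pi_* \mathcal{E}$ is a coherent sheaf on the (locally noetherian) scheme $X$, the function $x' \mapsto \dim_{k(\pi(x'))} (\pi_*\mathcal{E})\otimes k(\pi(x'))$ is upper semicontinuous on $X$, so there is an open $V \subset X$ containing $\pi(x)$ on which this dimension is $\leq n$. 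On the other hand, I claim the dimension is always $\geq n$: the natural evaluation map $\pi^* \pi_* \mathcal{E} \to \mathcal{E}$, restricted to $\mathcal{G}_{x'}$, has image inside the invariant part, but more robustly one uses that $\dim \h^0(\mathcal{G}_{x'}, \mathcal{E}_{|\mathcal{G}_{x'}})^{G_{x'}} \geq n$ fails in general — so I would instead argue directly that on $\mathcal{G}_{x'}$ triviality is equivalent to this invariant-dimension being $= n$ (the maximum possible, since the underlying vector bundle has rank $n$ and the weight-zero piece is a summand of it). Thus let $\mathfrak{U} = \pi^{-1}(V)$: for $x' \in |\mathfrak{U}|_0$ the invariant-section dimension is $\leq n$, hence (being $\leq n$ and, for a length-$n$ bundle on $\B \mu_{\mathbf r_{x'}}$, the weight spaces summing to $n$) the full rank sits in weight zero, i.e. $\mathcal{E}_{|\mathcal{G}_{x'}}$ is trivial. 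Finally I would note $x \in |\mathfrak{U}|_0$ since $\pi(x) \in V$, and that $\mathfrak{U}$ is open because $\pi$ is continuous.

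I expect the main obstacle to be the precise comparison between the fibre $(\pi_*\mathcal{E})\otimes k(\pi(x'))$ and invariant sections on the residual gerbe $\mathcal{G}_{x'}$, rather than on the whole (possibly non-reduced, possibly non-connected) fibre stack $\mathfrak{X}_{\pi(x')}$. Base change gives $(\pi_*\mathcal{E})\otimes k(\pi(x')) \simeq \h^0(\mathfrak{X}_{\pi(x')}, \mathcal{E}_{|\mathfrak{X}_{\pi(x')}})$ directly (good moduli spaces: global sections of the fibre), and one must then compare the fibre stack with its residual gerbe. Here I would use Lemma \ref{lem:residual-gerbe}: étale-locally near $x'$ the stack looks like $[\spec k(x')[\mathbf t]/(\mathbf t^{\mathbf r_{x'}})/\mu_{\mathbf r_{x'}}]$, whose good moduli space is a point and whose coherent cohomology of a vector bundle is just the $\mu_{\mathbf r_{x'}}$-invariants of the fibre at the origin — so $\h^0$ over the fibre stack agrees with $\h^0(\mathcal{G}_{x'}, \cdot)^{G_{x'}}$ after all, because the nilpotents in $k(x')[\mathbf t]/(\mathbf t^{\mathbf r_{x'}})$ contribute only to nonzero weights. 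Once this identification is in place, the upper-semicontinuity of fibre dimension of the coherent sheaf $\pi_*\mathcal{E}$ does the rest, and the ``$\dim = n$'' criterion for triviality on $\B\mu_{\mathbf r_{x'}}$ is elementary representation theory of a diagonalizable group.
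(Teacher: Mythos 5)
Your reduction to the coherent sheaf $\pi_*\mathcal E$ on the good moduli space is a reasonable starting point, but the numerical invariant you extract from it carries no information, and the argument breaks in two places. First, the identification $(\pi_*\mathcal E)\otimes k(\pi(x'))\simeq \h^0(\mathcal G_{x'},\mathcal E_{|\mathcal G_{x'}})$ is false. By base change the left-hand side is $\h^0$ of $\mathcal E$ over the full (non-reduced) fibre stack $[\spec B/\mu_{\mathbf r_{x'}}]$ with $B=k(x')[\mathbf t]/(\mathbf t^{\mathbf r_{x'}})$, i.e.\ the degree-zero part $M_0$ of the graded free $B$-module $M$ corresponding to $\mathcal E$. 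Writing $M\simeq\bigoplus_{j=1}^n B(\mathbf d_j)$, each summand contributes the one-dimensional space spanned by $\mathbf t^{\mathbf d_j}$ in degree zero, so $\dim M_0=n$ for \emph{every} choice of weights $\mathbf d_j$. Your claim that the nilpotents contribute only to nonzero weights is exactly what fails for a twisted summand $B(\mathbf d_j)$ with $\mathbf d_j\neq 0$: the nilpotent $\mathbf t^{\mathbf d_j}$ lands in weight zero after the twist. By contrast $\h^0(\mathcal G_{x'},\mathcal E_{|\mathcal G_{x'}})=(M/\mathbf t M)_0$ has dimension equal to the number of $j$ with $\mathbf d_j\equiv 0$. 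So the fibre dimension of $\pi_*\mathcal E$ is constantly $n$ and semicontinuity of this function detects nothing about the weights. Second, even granting your identification, the logic is inverted: the invariant part of $\mathcal E_{|\mathcal G_{x'}}$ is a direct summand of an $n$-dimensional space, hence automatically of dimension $\leq n$, and triviality is the condition that it equal $n$. Upper semicontinuity makes the locus $\{\dim\geq n\}$ \emph{closed}, not open, so your open set $V$ where $\dim\leq n$ is vacuous and the concluding step (``being $\leq n$ \dots\ the full rank sits in weight zero'') is a non sequitur.

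The missing idea, which is how the paper proceeds, is to test not the rank of $\pi_*\mathcal E$ but the surjectivity of the counit $\pi^*\pi_*\mathcal E\longrightarrow\mathcal E$: triviality of $\mathcal E_{|\mathcal G_{x'}}$ is equivalent to this map being an epimorphism at $x'$ (one implication is in Alper's proof of \cite[Theorem 10.3]{alper:good_moduli}; the other uses that $(\pi^*\pi_*\mathcal E)_{|\mathcal G_{x'}}$ is trivial and that quotients of trivial representations of a gerbe are trivial), and the epimorphism locus of a morphism of coherent sheaves is open because the support of the cokernel is closed. If you want to keep a numerical formulation, the quantity to control is the rank of this counit map along the residual gerbes, not the fibre dimension of $\pi_*\mathcal E$.
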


\begin{proof}
First notice that for any $x'\,\in\,\left|\mathfrak X\right|_0$, $\mathcal E_{|\mathcal G_{x'}}$ is trivial if and only if the natural morphism $\pi^*\pi_*\mathcal E\longrightarrow \mathcal E$ is an epimorphism at $x'$. Namely, the only if direction is showed in \cite[Proof of Theorem 10.3]{alper:good_moduli}, and the if direction follows from the fact that $(\pi^*\pi_*\mathcal E)_{|\mathcal G_{x'}}$ is trivial, and for any gerbe the subcategory of finite direct sums of the trivial representation is stable by quotient.

Now, since $\mathcal E$ is of finite type, the locus where $\pi^*\pi_*\mathcal E\longrightarrow \mathcal E$ is an epimorphism is open, which concludes the proof.
\end{proof}

We finally prove Proposition \ref{prop:alper_crit_torsors_single} by using the standard Tannakian interpretation of torsors as tensor functors.

\begin{proof}[Proof of Proposition \ref{prop:alper_crit_torsors_single}]
	Denote by $F_Y:\Rep_k G\longrightarrow \Vect \mathfrak X$ the tensor functor associated to the $G$-torsor $Y\longrightarrow \mathfrak X$. If $V\in \Rep_k G$, we also write $\mathcal E_V$ for $F_Y(V)$. By hypothesis, the composite tensor functor $F_{Y_{|\mathcal G_x}}:\Rep_k G\longrightarrow \Vect \mathcal G_x$ is trivial.

	Since $G/S$ is of finite type, the Tannakian category $\Rep_k G$ admits a 
tensor generator $W$.
By Lemma \ref{lem:restriction_trivial_open}, there exists an open substack $\mathfrak U\longrightarrow \mathfrak X$ containing $x$ such that for all $x'\in \left|\mathfrak U\right|_0$, the vector bundle
$(\mathcal E_{W})_{|\mathcal G_{x'}}$ is trivial. Since each $F_{Y_{|\mathcal G_{x'}}}$ is tensor and exact, it follows that for all $x'\in \left|\mathfrak U\right|_0$, and all representations $V \in \Rep_k G$, the vector bundle $(\mathcal E_V)_{|\mathcal G_{x'}}$ is trivial.

As Zariski spaces we have $\left|\mathfrak X\right|\,=\,\left|X\right|$, so
all what remains to be proved is the following. Let $Y\longrightarrow \mathfrak X$ be a
$G$-torsor such that for all $x\,\in\, \left|\mathfrak X\right|_0$, and all representations $V
\in \Rep_k G$, the vector bundle $(\mathcal E_V)_{|\mathcal G_x}$ is trivial.
Then there exists a \emph{fppf} cover $X'\longrightarrow X$ such that $Y_{X'}
\longrightarrow \mathfrak X_{X'}$ is trivial.
Let $\Vect(\mathfrak X)_{triv}$ the full sub-category of
$\Vect(\mathfrak X)$ consisting of objects $\mathcal E$ such that for
all $x\in \left|\mathfrak X\right|_0$, the vector bundle
$\mathcal E_{|\mathcal G_x}$ is trivial. This is clearly a tensor
sub-category. According to \cite[Theorem 10.3]{alper:good_moduli}, the 
functor $\pi^*$ induces an equivalence $\Vect(X)\longrightarrow
\Vect(\mathfrak X)_{triv}$, with inverse equivalence $\pi_*$. This last functor
is exact since $\mathfrak X$ is tame. Since $\pi^*$ is a (strong) tensor
functor, $\pi_*$ is endowed with a (weak) tensor structure, that is strong because
$\pi^*$ is an equivalence. By hypothesis, the functor $F_Y$ factors through
$\Vect(\mathfrak X)_{triv}$, hence the functor $\pi_*\circ F_Y : \Rep_k G
\longrightarrow \Vect X$ is well defined, tensor, and exact. But this precisely
means that the $G$-torsor $Y\longrightarrow \mathfrak X$ descends along
$\pi:\mathfrak X\longrightarrow X$.
\end{proof}

\subsection{A criterion to be ab-uniformizable}
\label{sub:a_criterion_to_be_ab_uniformizable}

It follows from Proposition \ref{prop:equiv} that the first assertion in the equivalence of Theorem \ref{crit_exist} exactly means that $\mathfrak X$ is ab-uniformizable, in the following sense.

\begin{definition}[]
\label{def:ab-unif}
An algebraic stack $\mathfrak X$ is ab-uniformizable if there exists a $G$-torsor $Y\longrightarrow \mathfrak X$, where $Y$ is a $k$-algebraic space, and $G$ is an abelian finite $k$-group scheme.
\end{definition}

\begin{remark}
\label{rem:ab-unif}
If $\mathfrak X$ is a stack of roots over a $k$-scheme, Remark \ref{rem:equiv} \eqref{rem:equiv-alg-space} shows that $\mathfrak X$ is ab-uniformizable if and only if there exists a $G$-torsor $Y\longrightarrow \mathfrak X$, where $Y$ is a $k$-scheme, and $G$ is an abelian finite $k$-group scheme.
\end{remark}

We will need the following ``abelian'' variant of the main result of \cite{bisbor:fundamental_gerbe}. Recall from \cite{bv:nori_gerbe} that an algebraic stack $\mathfrak X/k$ is called inflexible if it admits a fundamental gerbe.

\begin{proposition}
\label{prop:char_ab_unif}
Let $\mathfrak X/k$ be an inflexible proper stack of finite type and with finite inertia. Then $\mathfrak X$ is ab-uniformizable if and only if for any closed point $x$, any representation $V$ of $\mathcal G_x$ is a subquotient of the restriction of a basic essentially finite vector bundle on $\mathfrak X$ along $\mathcal G_x\,\longrightarrow\, \mathfrak X$.
\end{proposition}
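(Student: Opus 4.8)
The plan is to reduce the ab-uniformizability of $\mathfrak X$ to a Tannakian statement about Nori's fundamental gerbe and then feed that into the abelian variant of the main result of \cite{bisbor:fundamental_gerbe}. First I would recall that for an inflexible proper stack of finite type, the category $\EF(\mathfrak X)$ of essentially finite vector bundles (or rather its basic full subcategory $\EF(\mathfrak X)^{bas}$) is Tannakian and its Tannaka dual is the ``abelian'' Nori fundamental gerbe $\Pi^{bas}_{\mathfrak X/k}$, which classifies torsors under abelian finite group schemes. Concretely, giving a $G$-torsor $Y\longrightarrow \mathfrak X$ with $G$ abelian finite is the same as giving a morphism $\mathfrak X\longrightarrow \B G$ over $k$, equivalently a tensor exact functor $\Rep_k G\longrightarrow \EF(\mathfrak X)^{bas}$, equivalently a morphism of gerbes $\Pi^{bas}_{\mathfrak X/k}\longrightarrow \B_k G$. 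So $\mathfrak X$ is ab-uniformizable if and only if some such morphism has trivial kernel, i.e. realizes $\Pi^{bas}_{\mathfrak X/k}$ as (a quotient presenting it via) a gerbe banded by a finite abelian group with trivial total space; by Remark \ref{rem:ab-unif} the ``algebraic space'' versus ``scheme'' distinction is irrelevant here.

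Next I would make the link with the residual gerbes explicit. For each closed point $x$, there is the natural $2$-commutative triangle $\mathcal G_x\longrightarrow \mathfrak X$, which after passing to fundamental gerbes gives $\Pi_{\mathcal G_x/k(x)}\longrightarrow \Pi^{bas}_{\mathfrak X/k}$ (note $\mathcal G_x$ is itself a gerbe, so it is its own fundamental gerbe up to the abelianization issue). Restriction of a vector bundle $\mathcal E$ on $\mathfrak X$ to $\mathcal G_x$ corresponds, on the Tannakian side, to the composite functor $\Rep \Pi^{bas}_{\mathfrak X/k}=\EF(\mathfrak X)^{bas}\longrightarrow \Rep\mathcal G_x$. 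The condition in the statement---that every representation of $\mathcal G_x$ is a subquotient of $\mathcal E_{|\mathcal G_x}$ for some $\mathcal E\in \EF(\mathfrak X)^{bas}$---is exactly the statement that this restriction functor is ``full on subquotients'', which by the standard Tannakian dictionary (cf. Deligne--Milne, the criterion for a morphism of affine group schemes or gerbes to be a monomorphism/faithfully flat) is equivalent to the morphism $\mathcal G_x\longrightarrow \Pi^{bas}_{\mathfrak X/k}$ being faithfully flat, i.e. an epimorphism of gerbes. So the right-hand condition of the Proposition is precisely: for every closed point $x$, the map from the residual gerbe to the abelian fundamental gerbe is an epimorphism.

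Then I would invoke the abelian version of the Tannakian uniformization criterion of \cite{bisbor:fundamental_gerbe}: an inflexible proper stack with finite inertia is ab-uniformizable precisely when the abelian fundamental gerbe $\Pi^{bas}_{\mathfrak X/k}$ is ``small'' enough to be killed by a finite abelian quotient, and the way one checks this pointwise is exactly via the residual gerbes: $\mathfrak X$ admits a trivializing abelian finite torsor iff for each closed point $x$ the induced map $\mathcal G_x\longrightarrow \Pi^{bas}_{\mathfrak X/k}$ is faithfully flat. In one direction, if $Y\longrightarrow \mathfrak X$ is a $G$-torsor with $Y$ an algebraic space and $G$ abelian finite, then restricting to $\mathcal G_x$ gives a $G$-torsor over $\mathcal G_x$ whose total space is an algebraic space; since $\mathcal G_x\simeq \B_{k(x)}\mu_{\mathbf r_x}$ and the torsor comes from a monomorphism $\mu_{\mathbf r_x}\hookrightarrow G$ (using Lemma \ref{lem:residual-gerbe}), one reads off that the relevant representations of $\mathcal G_x$ appear in $\mathcal E_{|\mathcal G_x}$ for $\mathcal E=F_Y(W)$, $W$ a tensor generator---this is the easy ``only if'' direction and it is basically the same computation as in the ``only if'' part of Theorem \ref{crit_exist}. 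For the ``if'' direction one uses the epimorphism condition at all closed points together with finiteness of inertia and properness to control the fundamental gerbe and produce the torsor, which is the content transported from \cite{bisbor:fundamental_gerbe}.

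The main obstacle is the ``if'' direction: knowing that $\mathcal G_x\longrightarrow \Pi^{bas}_{\mathfrak X/k}$ is an epimorphism for every closed point $x$ only controls the gerbe ``locally'', and one must upgrade this to a single abelian finite quotient of $\Pi^{bas}_{\mathfrak X/k}$ whose pullback torsor has algebraic-space total space globally; this is exactly where properness, finiteness of inertia, and the structure theory of the abelian fundamental gerbe from \cite{bisbor:fundamental_gerbe} (and the identification of $\EF(\mathfrak X)^{bas}$ with $\Rep \Pi^{bas}_{\mathfrak X/k}$) do the real work. A secondary technical point is checking carefully that ``subquotient of the restriction'' translates to faithful flatness of the gerbe morphism rather than just to the morphism being representable; this needs the standard fact that for affine gerbes a morphism is faithfully flat iff every object in the target is a subquotient of (the image of) an object from the source, which I would cite rather than reprove.
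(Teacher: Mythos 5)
Your overall architecture is the paper's: reduce ab-uniformizability to a property of the morphism from $\mathfrak X$ to its abelianized Nori fundamental gerbe, check that property pointwise on residual gerbes, and translate it into a Tannakian condition on essentially finite basic bundles. But there is a genuine error at the crux of the translation: you dualize the subquotient condition the wrong way. By the standard Tannakian dictionary (Deligne--Milne, Proposition 2.21), the condition that \emph{every} representation of $\mathcal G_x$ is a subquotient of an object restricted along $\mathcal G_x\longrightarrow \Pi^{bas}_{\mathfrak X/k}$ characterizes $\mathcal G_x\longrightarrow \Pi^{bas}_{\mathfrak X/k}$ being a closed immersion on automorphism groups, i.e.\ \emph{faithful} (this is exactly \cite[Proposition 21]{bisbor:fundamental_gerbe}, as corrected in Remark \ref{rem:erratum}); it is \emph{not} equivalent to the morphism being faithfully flat or an epimorphism of gerbes. (Compare $\B\mu_2\to\B\mu_4$: faithful but not an epimorphism; $\B\mu_4\to\B\mu_2$: an epimorphism but not faithful.) The correct chain is: $\mathfrak X$ is ab-uniformizable iff $\mathfrak X\to\pi^{ab}_{\mathfrak X/k}$ is faithful (equivalently, the relative inertia is trivial, which is what makes the pullback of a chart an algebraic space --- \cite[Proposition 8]{bisbor:fundamental_gerbe}); faithfulness of a morphism from a stack with finite inertia is detected on residual gerbes (\cite[Lemma 12]{bisbor:fundamental_gerbe}); and faithfulness of $\mathcal G_x\to\pi^{ab}_{\mathfrak X/k}$ dualizes to the subquotient condition. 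With ``epimorphism'' in place of ``faithful'' neither implication of your argument goes through: requiring $\mathcal G_x\to\Pi^{bas}_{\mathfrak X/k}$ to be an epimorphism at every point is a different (and generally false) condition that has nothing to do with killing the stackiness of $\mathfrak X$.

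Two smaller points. First, you gloss over the passage from the pro-finite gerbe $\pi^{ab}_{\mathfrak X/k}=\varprojlim \Pi_i$ to a single finite abelian quotient: the paper isolates this as Lemma \ref{lem:sugg-referee}, a quasi-compactness argument showing that faithfulness of $\mathfrak X\to\varprojlim\Pi_i$ forces faithfulness of some $\mathfrak X\to\Pi_i$; this is where properness/quasi-compactness and finite inertia actually enter, and it deserves more than the phrase ``structure theory''. Second, your ``only if'' sketch invokes Lemma \ref{lem:residual-gerbe} and the description $\mathcal G_x\simeq\B_{k(x)}\mu_{\mathbf r_x}$, which is specific to stacks of roots, whereas Proposition \ref{prop:char_ab_unif} is stated for an arbitrary inflexible proper stack with finite inertia; the general statement should not use that identification.
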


We first state and prove an auxiliary lemma, that was suggested to us by the referee, and generalizes the proof of \cite[Proposition 9]{bisbor:fundamental_gerbe}. The reader can find the precise definition of a projective system of affine gerbes in \cite[Definition 3.3] {bv:nori_gerbe}.

\begin{lemma}
\label{lem:sugg-referee}
Let $\mathfrak X$ be a quasi-compact algebraic stack over $k$ with finite inertia, $I$ a cofiltered $2$-category, $\Pi=\varprojlim_{i\in I} \Pi_i$ a projective limit of affine gerbes over $k$, and $\mathfrak X\longrightarrow \Pi$ a morphism. Then $\mathfrak X\longrightarrow \Pi$ is faithful if and only if there exists an object $i$ of $I$ such that $\mathfrak X\longrightarrow \Pi_i$ is faithful.
\end{lemma}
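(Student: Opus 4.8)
The plan is to reduce the ``only if'' direction to a finiteness statement about the projective system, the ``if'' direction being immediate. First I would record the trivial direction: if $\mathfrak X \longrightarrow \Pi_i$ is faithful for some $i$, then since $\mathfrak X \longrightarrow \Pi$ factors as $\mathfrak X \longrightarrow \Pi_i$ composed with the projection $\Pi \longrightarrow \Pi_i$ (the latter being in particular faithful, as a morphism of affine gerbes induced by the projective system), the composite $\mathfrak X \longrightarrow \Pi$ is faithful; here one uses that faithfulness of a representable-up-to-stuff morphism, or rather of the induced map on automorphism group schemes, is preserved under composition on the right with a faithful morphism. So the content is the forward implication.

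For the ``only if'' direction, the key translation is Tannakian: a morphism $\mathfrak X \longrightarrow \Pi$ to a (pro-)affine gerbe corresponds to a tensor functor on the categories of representations, i.e. on $\Vect$, and faithfulness of $\mathfrak X \longrightarrow \Pi$ should be detected on the automorphism gerbes at geometric points. Concretely, pick a geometric point and let $\phi\colon \Gamma \longrightarrow \Pi_{\bar x}$ be the induced homomorphism of affine group schemes over $\bar k$, where $\Gamma = \varprojlim_i \Gamma_i$ with $\Gamma_i$ the band of $\Pi_i$ at $\bar x$; since $\mathfrak X$ has finite inertia, the automorphism group scheme $A_{\bar x}$ of $\mathfrak X$ at $\bar x$ is finite. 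Faithfulness of $\mathfrak X \longrightarrow \Pi$ means the composite $A_{\bar x} \longrightarrow \Gamma$ (coming from functoriality of automorphism groups) is a monomorphism; faithfulness of $\mathfrak X \longrightarrow \Pi_i$ means $A_{\bar x} \longrightarrow \Gamma_i$ is a monomorphism. Now $A_{\bar x} \longrightarrow \Gamma = \varprojlim \Gamma_i$ is a monomorphism if and only if its kernel is trivial; the kernel is $\varprojlim_i \ker(A_{\bar x} \longrightarrow \Gamma_i)$, a projective limit of a cofiltered system of closed subgroup schemes of the \emph{finite} group scheme $A_{\bar x}$.

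The main obstacle — really the crux — is then a noetherianity/finiteness argument: a cofiltered projective system of closed subschemes of a fixed finite (hence artinian) scheme stabilizes. Indeed the closed subgroup schemes $K_i := \ker(A_{\bar x} \longrightarrow \Gamma_i)$ form a cofiltered system under the transition maps, so they are directed \emph{downward} by inclusion (after reindexing/cofinality), and $A_{\bar x}$ being finite over $\bar k$ means its Hopf algebra is a finite-dimensional $\bar k$-vector space; a decreasing net of closed subschemes corresponds to an increasing net of ideals in a finite-dimensional algebra, which must stabilize. Hence there is $i_0$ with $K_{i_0} = \varprojlim_i K_i = \ker(A_{\bar x}\longrightarrow \Gamma)$, which is trivial by hypothesis, so $A_{\bar x} \longrightarrow \Gamma_{i_0}$ is a monomorphism, i.e. $\mathfrak X \longrightarrow \Pi_{i_0}$ is faithful. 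One must also check that faithfulness of a morphism from a quasi-compact stack with finite inertia to an affine gerbe is indeed equivalent to injectivity on automorphism group schemes at all geometric points — this is where quasi-compactness is used, to pass from a pointwise statement to a global one, reducing to finitely many points or invoking that the locus of non-faithfulness is closed — and that the reindexing to make $I$ honestly cofiltered in the $1$-categorical sense (the $2$-categorical subtleties from \cite[Definition 3.3]{bv:nori_gerbe}) does not affect the limit of group schemes. I expect the finite-inertia-plus-quasi-compact packaging to be the only genuinely delicate point; the stabilization of the kernels is then a short artinian argument.
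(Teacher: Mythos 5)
Your strategy is essentially the one the paper uses: translate faithfulness into triviality of the relative inertia, use finiteness of the inertia to force stabilization of the kernels along the cofiltered system, and use quasi-compactness to make the resulting index uniform. The fiberwise Artinian stabilization you describe is a legitimate substitute for the paper's sheaf-theoretic version (where $I_{\mathfrak X/\Pi_i}=\Spec\mathcal A_i$ and one argues with the finite type sheaves $\mathcal A_i/\mathcal O_{\mathfrak X}$). However, there is a genuine gap in the assembly. Your pointwise argument produces, for each geometric point $\bar x$, an index $i_0(\bar x)$ with $\ker\bigl(A_{\bar x}\to\Gamma_{i_0(\bar x)}\bigr)$ trivial; the lemma requires a \emph{single} index working for all points at once. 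The correct assembly is: (a) for each $i$ the locus $\mathfrak U_i$ where the relative inertia of $\mathfrak X\to\Pi_i$ is trivial is open (it is the complement of the support of the finite type sheaf $\mathcal A_i/\mathcal O_{\mathfrak X}$); (b) your pointwise stabilization shows $\bigcup_i\mathfrak U_i=\mathfrak X$; (c) quasi-compactness extracts a finite subcover $\mathfrak U_{i_1},\dots,\mathfrak U_{i_n}$; (d) cofilteredness of $I$ provides a single $i$ dominating $i_1,\dots,i_n$, whence $\mathfrak U_i=\mathfrak X$. Steps (a) and especially (d) are missing from your write-up, and you misattribute the role of quasi-compactness: the equivalence ``$\mathfrak X\to\Pi_i$ is faithful if and only if $A_{\bar x}\to\Gamma_{i,\bar x}$ is injective at every geometric point'' needs only Nakayama for the finite type sheaf $\mathcal A_i/\mathcal O_{\mathfrak X}$, not quasi-compactness; quasi-compactness is what makes the index uniform over $\mathfrak X$, which is the actual content of the lemma.

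A smaller point: your justification of the trivial direction is wrong. The projection $\Pi\to\Pi_i$ from a projective limit of gerbes need not be faithful (the transition maps of the system need not be). What you actually need is the elementary fact that if a composite $P\circ F$ is faithful then $F$ is faithful; applied to $\mathfrak X\to\Pi\to\Pi_i$ this gives the ``if'' direction with no hypothesis on the projection.
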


\begin{proof} The `if' condition is clear, let us show the `only if' one.

	Let $I_{\mathfrak X/\Pi_i}=\ker\left(I_{\mathfrak X}\longrightarrow \left(I_{\Pi_i}\right)_{|\mathfrak X} \right)$ be the relative inertia stack. The morphism $\mathfrak X\longrightarrow \Pi_i$ is faithful if and only if $I_{\mathfrak X/\Pi_i}$ is trivial as a group scheme over $\mathfrak X$ (\cite[Proposition 15]{bisbor:fundamental_gerbe}). Since $I_{\mathfrak X/\Pi_i}$ is a finite group scheme over $\mathfrak X$, we can write $I_{\mathfrak X/\Pi_i}=\Spec \mathcal A_i$, where $\mathcal A_i$ is a quasi-coherent sheaf of algebras, that is finitely generated as a sheaf of $\mathcal O_{\mathfrak X}$-modules. The structural morphism $\mathcal O_{\mathfrak X}\longrightarrow \mathcal A_i$ is injective (since it admits a retraction corresponding to the unit section of $I_{\mathfrak X/\Pi_i}$). Hence the locus $\mathfrak U_i$ where $I_{\mathfrak X/\Pi_i}$ is trivial is the zero-locus of $\mathcal A_i/\mathcal O_{\mathfrak X}$, which is open. 
	
	Because we assume that $\mathfrak X\longrightarrow\Pi$ is faithful, we have that $I_{\mathfrak X/\Pi}$ is trivial, which implies that $\varinjlim_{i\in I} \mathcal A_i/\mathcal O_{\mathfrak X}= 0 $. Since for any arrow $j\longrightarrow i$ in $I$, we have that $I_{\mathfrak X/\Pi_j}$ is a closed subscheme of $I_{\mathfrak X/\Pi_i}$, the corresponding morphism $\mathcal A_i/\mathcal O_{\mathfrak X}\longrightarrow \mathcal A_j/\mathcal O_{\mathfrak X}$ is an epimorphism. Now if $x$ is any point of $\mathfrak X$, as $I$ is cofiltered, and each $\mathcal A_i$ is of finite type, the fact $\varinjlim_{i\in I} \mathcal A_i/\mathcal O_{\mathfrak X}$ is trivial at $x$ implies that there exists $i$ in $I$ such that $\mathcal A_i/\mathcal O_{\mathfrak X}$ is trivial at $x$. Hence $\cup_{i\in I} \mathfrak U_i=\mathfrak X$. Since $\mathfrak X$ is quasi-compact and $I$ is cofiltered, there exists $i$ in $I$ such that $\mathfrak U_i=\mathfrak X$.
\end{proof}

\begin{proof}[Proof of Proposition \ref{prop:char_ab_unif}]
From Lemma \ref{lem:sugg-referee} and \cite[Proposition 8]{bisbor:fundamental_gerbe} it follows that $\mathfrak X$ is ab-uniformizable if and only if $\mathfrak X\longrightarrow\pi_{\mathfrak X/k}^{ab}$ is faithful. Applying \cite[Lemma 12]{bisbor:fundamental_gerbe}, this is equivalent to : for any closed point $x$ of $\mathfrak X$, the morphism $\mathcal G_x\longrightarrow \pi_{\mathfrak X/k}^{ab}$ is faithful. By \cite[Proposition 21, Lemma 14]{bisbor:fundamental_gerbe}, this means that any representation $V$ of $\mathcal G_x$ is a subquotient of the restriction of a basic essentially finite vector bundle on $\mathfrak X$ along $\mathcal G_x\,\longrightarrow\, \mathfrak X$. 
\end{proof}

\begin{remark}
\label{rem:erratum}
We must warn the reader that \cite[Proposition 21]{bisbor:fundamental_gerbe} is incorrectly stated and holds for \emph{algebraic} gerbes only. This confusion appears several times, for instance \cite[Proposition 15]{bisbor:fundamental_gerbe} is valid for \emph{algebraic} stacks. But everything is correct if whenever a non-algebraic stack appears (for instance a fundamental gerbe) one replaces `representable' by `faithful'.
\end{remark}

\section{Proof of Theorem \ref{crit_exist}}

As already noticed, the first assertion in the equivalence of Theorem \ref{crit_exist} exactly means that $\mathfrak X$ is ab-uniformi\-zable. We will now proceed to show that the second assertion is equivalent.

In view of the assumptions we made on the scheme $X$, the stack $\mathfrak X$ is geometrically connected (\cite[Corollaire 3.7]{bor:corr}) and geometrically reduced (\cite[\S 1, Proposition 1.7.2]{gm:tame}), hence inflexible (\cite[Proposition 5.5]{bv:nori_gerbe}). It is also proper, of finite type (\cite[Corollaire 3.6]{bor:corr}), and has finite inertia (see \cite[\S 4.2.3]{bor:corr} for the description of the inertia stack of a stack of roots). Hence we can use Proposition \ref{prop:char_ab_unif}.
that asserts that $\mathfrak X$ is ab-uniformizable if and only if for any closed point $x$, any representation $V$ of $\mathcal G_x$ is a subquotient of the restriction of a basic essentially finite vector bundle on $\mathfrak X$ along $\mathcal G_x\longrightarrow \mathfrak X$. Of course, this condition is empty outside of the support of $D$, where the residual gerbes are trivial.

Before studying $\mathcal G_x$, let us recall how vector bundles on $\mathfrak X$ are interpreted. 

\begin{remark}
\label{rem:correspondence}
To each vector bundle $\mathcal F$ on $\mathfrak X$, one can associate a parabolic 
vector bundle $\mathcal E_\cdot$ on $(X,\mathbf D)$ with weights in 
$\frac{1}{\mathbf r}\mathbb Z^I$ in the following way : if $\mathbf l$ belongs to 
$\mathbb Z^I$, we define $\mathcal E_{\frac{\mathbf l}{\mathbf r}} =\pi_* 
\left(\mathcal F\otimes_{\mathcal O_{\mathfrak X}}{\mathcal N^{\otimes \mathbf 
l}}^{\vee}\right)$. This functor gives a tensor equivalence between (essentially 
finite) vector bundles on $\mathfrak X$ and (essentially finite) parabolic vector 
bundles on $X$ with weights in $\frac{1}{\mathbf r}\mathbb Z^I$ (\cite[Th\'eor\`eme 
2.4.7]{bor:rep})\footnote{It should be clarified that we rely here crucially on the fact
that $D$ is a simple normal crossings divisor. By using the formalism of 
\cite{bv:par_sheaves}, one can extend this correspondence to the case of a general 
normal crossings divisor.}
\end{remark}

To proceed further, we must describe the residual gerbe $\mathcal G_x$ at a closed point $x$. According to Lemma \ref{lem:residual-gerbe}, this gerbe is canonically banded by $\mu_{\mathbf r_x}$ (since the band is abelian, this just means that there is a canonical isomorphism $I_{\mathcal G_x}\simeq \left(\mu_{\mathbf r_x}\right)_{\mathcal G_x}$). It is moreover neutral, but it is not canonically neutralized (in other words, there is no canonical choice of a section of the residual gerbe). 
So we will avoid choosing a specific section of $\mathcal G_x$, and just use the fact that the 
category $\Vect \mathcal G_x$ is semi-simple and that any object is a direct sum of 
$\mathcal N_{|\mathcal G_x}^{\otimes \mathbf l}$, for $\mathbf 0 \leq \mathbf l 
<\mathbf r_x$. From this we deduce that $\mathfrak X$ is ab-uniformizable if and 
only if for any closed point $x$, and for any $\mathbf 0 \leq \mathbf l <\mathbf r_x$, 
there exists an essentially finite basic vector bundle $\mathcal F$ on $\mathfrak X$ such that
$\Hom_{\mathcal G_x}(\mathcal N_{|\mathcal G_x}^{\otimes \mathbf l},\mathcal F_{|\mathcal G_x})\neq \{0\}$.
%$\Hom_{\mathcal G_x}(\mathcal F_{|\mathcal G_x},\mathcal N_{|\mathcal G_x}^{\otimes 
%\mathbf l})\neq \{0\}$.

So the only thing that remains to be done is to show that, if $\mathcal E_\cdot$ is the parabolic sheaf associated to $\mathcal F$ via the correspondence, and $x$ is a closed point in the support of $D$, then for any $\mathbf 0 \leq \mathbf l <\mathbf r_x$, we have $\Hom_{\mathcal G_x}(\mathcal N_{|\mathcal G_x}^{\otimes \mathbf l},\mathcal F_{|\mathcal G_x})\neq \{0\}$ if and only if $\mathcal E_\cdot$ admits $\mathbf l /\mathbf r $ as a weight at $x$. %Consider ${\mathfrak D}\longrightarrow D$ the gerbe of $\mathbf r$-th roots of $\mathcal O(\mathbf D)_{|D}$, or equivalently, the reduced substack associated to $\mathfrak X _{|D}$. It fits into the following commutative diagram

For $J\subset I$, set $D_J=\cap_{i\in J} D_i$ and $D_J^\circ=D_J\backslash\cup_{K\supsetneq J} D_K$, and similarly by substituting $D$ with $\mathfrak D$. The morphism  $\mathfrak D_J^\circ \to D_J^\circ$ is a gerbe banded by $\mu_{\mathbf r_J}=\prod_{i\in J} \mu_{r_i}$ and since $x\in D_{I_x}^\circ$ this shows that in the following commutative diagram:

\[
	\xymatrix{
		\mathcal G_x\ar[r]\ar[d] & \mathfrak D_{I_x} \ar[r]^{j_{I_x}}\ar[d]_{p_{I_x}} &\mathfrak X\ar[d]^{\pi} \\
\spec k(x) \ar[r] &  D_{I_x} \ar[r]_{i_{I_x}} & X
	}
\]
%where only 
the left hand square is Cartesian. By base change, 
%which holds because ${\mathcal D}\longrightarrow D$ is a gerbe banded by a linearly reductive group, 
we get : $$\Hom_{\mathcal G_x}(\mathcal N_{|\mathcal G_x}^{\otimes \mathbf l},\mathcal F_{|\mathcal G_x})\simeq \left({p_{I_x}}_*(\mathcal F\otimes_{\mathcal O_{\mathfrak X}}{\mathcal N^{\otimes \mathbf l}}^\vee)_{|\mathfrak D_{I_x}}\right)\otimes k(x)\; .$$ But from the exact sequence

\[  \left(\bigoplus_{i\in I_x}  \mathcal N_i\right)^\vee \longrightarrow \mathcal O_{\mathfrak X} \longrightarrow (j_{I_x})_*\mathcal O_{\mathfrak D_{I_x}}\longrightarrow 0 \]
we deduce by exactness of $\pi_*$ the exact sequence

\[
 \bigoplus_{i\in I_x}\mathcal E_{\frac{\mathbf l+\mathbf e_i}{\mathbf r} }   \longrightarrow \mathcal E_{\frac{\mathbf l}{\mathbf r}} \longrightarrow {i_{I_x}}_* {p_{I_x}}_*(\mathcal F\otimes_{\mathcal O_{\mathfrak X}}{\mathcal N^{\otimes \mathbf l}}^\vee)_{|\mathfrak D_{I_x}}\longrightarrow 0 
\]
hence our claim, and the theorem follows.

\appendix

\section{Basic objects in Tannakian categories}
\label{sec:abelian_objects_in_tannakian_categories}

\subsection{Abelianization of affine group schemes}
\label{sub:abelianization_of_affine_group_schemes}

We follow partly the exposition given in \cite[Chapter 10]{wat:aff}.
Let $k$ be a field and $G/k$ an affine group scheme. Denote by $H$ its Hopf algebra. The commutator morphism $G^{2n}\longrightarrow G$ sending $(x_1,y_1,\cdots,x_n,y_n)$ to
$[x_1,y_1]\cdots [x_n,y_n]$ corres\-ponds to a ring morphism $H\longrightarrow H^{\otimes 2n}$ whose kernel will be denoted by $I_n$.

\begin{definition}[]
\label{def:derived_subgroup}
The derived subgroup $D(G)$ of $G$ is the closed normal subgroup defined by the ideal $\cap_{n\in \mathbb N^*} I_n$. 
\end{definition}

To check that $D(G)$ is a normal subgroup is straightforward. Thus, by Chevalley's theorem, the 
sheaf $G/D(G)$ is representable by an affine group scheme. Moreover, it is also easy to verify that $G/D(G)$ is abelian and that the morphism $G\longrightarrow G/D(G)$ is universal among morphisms from $G$ to abelian affine group schemes over $k$. 

We will need the fact that the formation of $D(G)$ commutes with base change, in the following sense.

\begin{lemma}
\label{lem:derived_subgroup_base_change}
If $A/k$ is a $k$-algebra, then as ideals of $H\otimes_k A$ : 
$$ \left(\cap_{n\in \mathbb N^*} I_n\right)\otimes_k A= \cap_{n\in \mathbb N^*}\left( I_n \otimes_k A \right)\, .$$
\end{lemma}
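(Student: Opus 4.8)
The plan is to express both sides of the claimed identity as kernels and then reduce to an elementary injectivity statement that holds precisely because $k$ is a field. In particular, no reduction to the case where $G$ is of finite type is needed, and the Hopf structure on $H$ plays no role beyond the description of the $I_n$ as kernels.

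First, write $q_n\colon H\longrightarrow H^{\otimes 2n}$ for the structural ring morphism, so that $I_n=\ker q_n$, and set $q=(q_n)_{n}\colon H\longrightarrow \prod_{n\in\mathbb N^*}H^{\otimes 2n}$, so that $\cap_n I_n=\ker q$. Since $A$ is flat over the field $k$, the functor $-\otimes_k A$ is exact and hence commutes with kernels; applying this to $q$ and to each $q_n$ yields
\[
\Big(\cap_n I_n\Big)\otimes_k A=\ker\big(q\otimes_k A\big)\, ,\qquad
\cap_n\big(I_n\otimes_k A\big)=\ker\Big((q_n\otimes_k A)_n\colon H\otimes_k A\longrightarrow \prod_n\big(H^{\otimes 2n}\otimes_k A\big)\Big)\, .
\]
So it suffices to identify these two kernels.

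Second, I would note that the morphism $(q_n\otimes_k A)_n$ factors as
\[
H\otimes_k A\;\xrightarrow{\,q\otimes_k A\,}\;\Big(\prod_n H^{\otimes 2n}\Big)\otimes_k A\;\xrightarrow{\,\iota\,}\;\prod_n\big(H^{\otimes 2n}\otimes_k A\big)\, ,
\]
where $\iota$ denotes the canonical comparison morphism, characterised by $\pr_n\circ\iota=p_n\otimes_k A$ with $p_n$ and $\pr_n$ the respective projections. Consequently $\ker\big((q_n\otimes_k A)_n\big)=\ker\big(q\otimes_k A\big)$ as soon as $\iota$ is injective, and then the two displayed ideals coincide, which is the assertion.

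Third — the only real content — I would prove that for any family $(M_n)_{n}$ of $k$-vector spaces and any $k$-algebra $A$, the canonical map $\iota\colon\big(\prod_n M_n\big)\otimes_k A\longrightarrow\prod_n\big(M_n\otimes_k A\big)$ is injective. Let $\xi=\sum_{j=1}^r m^{(j)}\otimes a_j$ lie in $\ker\iota$, with $m^{(j)}=(m^{(j)}_n)_n\in\prod_n M_n$; choose a $k$-basis $e_1,\dots,e_s$ of the finite-dimensional subspace of $A$ spanned by $a_1,\dots,a_r$ and write $a_j=\sum_l c_{jl}e_l$ with $c_{jl}\in k$, so that $\xi=\sum_l\big(\sum_j c_{jl}m^{(j)}\big)\otimes e_l$. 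For each $n$ the vanishing of $\pr_n(\xi)$ reads $\sum_l\big(\sum_j c_{jl}m^{(j)}_n\big)\otimes e_l=0$ in $M_n\otimes_k A$; since $e_1,\dots,e_s$ are linearly independent over $k$, the submodule $\bigoplus_l M_n\otimes_k e_l$ of $M_n\otimes_k A$ is a genuine direct sum, whence $\sum_j c_{jl}m^{(j)}_n=0$ in $M_n$ for every $l$ and every $n$. Therefore $\sum_j c_{jl}m^{(j)}=0$ in $\prod_n M_n$ for every $l$, so $\xi=0$.

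The only (mild) obstacle is bookkeeping: one must set up the factorisation through $\iota$ correctly and keep in mind that $-\otimes_k A$ is compatible with arbitrary products merely up to injectivity — it is not an isomorphism on infinite products in general — which is exactly the point where the hypothesis that $k$ is a field is used.
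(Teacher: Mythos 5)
Your proof is correct and rests on the same idea as the paper's: the paper's entire argument is that the left-to-right inclusion is obvious and the reverse one follows by ``considering a basis of $A/k$'', which is exactly the content of your third step (injectivity of $\bigl(\prod_n M_n\bigr)\otimes_k A\longrightarrow\prod_n\bigl(M_n\otimes_k A\bigr)$ via a basis of the span of the coefficients). The kernel/comparison-map packaging you add is sound and makes explicit where flatness over the field is used, but it is the same computation.
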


\begin{proof}
The inclusion from left to right is obvious, and it is enough to consider a basis of $A/k$ to show the opposite inclusion.
\end{proof}

\subsection{Abelian gerbes}
\label{sub:abelian_gerbes}

Let $k$ be a field, we denote as usual $S=\spec k$. Following \cite[\S 3]{bv:nori_gerbe}, we will use \emph{affine gerbe} for an affine fpqc gerbe with an affine chart, and an affine diagonal. These gerbes are called Tannakian in \cite{rivano:cat_tan}.

\begin{definition}
\label{def:abelian_gerbe}
The affine gerbe $\mathcal G/k$ will be called \emph{abelian} if there is an extension $k'/k$, and an object $\xi$ of $\mathcal G(k')$ such that $\Aut_{k'}(\xi)$ is abelian.
\end{definition}

Since a form of an abelian group is abelian, if this holds for one object, this is true for all objects. Let us reformulate this canonically.

Let $I_\mathcal{G}= \mathcal{G} \times_{\mathcal{G}\times_S \mathcal{G}}\mathcal{G}$ be the inertia stack of $\mathcal G$. 
The gerbe $\mathcal{G}$ is abelian if and only if $I_\mathcal{G}$, as a group scheme over $\mathcal G$, is abelian.

\begin{proposition}\label{prop:car_ab_gerbe}
Let $\phi:\mathcal{G}\longrightarrow S$ be the structural morphism.
The gerbe $\mathcal G/k$ is abelian if and only if its inertia stack descends along $\phi$ as a group scheme to $S= \spec{k}$.
\end{proposition}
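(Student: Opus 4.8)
The plan is to show that the two statements are equivalent by unwinding what ``descends along $\phi$'' means and relating it to Lemma \ref{lem:derived_subgroup_base_change}. The key observation is that, since $\mathcal G$ is a gerbe over $S = \spec k$, it is neutralized by some faithfully flat extension $k'/k$; picking an object $\xi \in \mathcal G(k')$ and setting $G := \Aut_{k'}(\xi)$, one has $\mathcal G \times_S \spec k' \simeq \B_{k'} G$, and the inertia stack $I_{\mathcal G}$ pulls back to the group scheme $G \times_{k'} \B_{k'} G$ over $\B_{k'} G$, i.e.\ to $G$ acting on itself by conjugation. So ``$I_{\mathcal G}$ descends along $\phi$'' means precisely that this conjugation-twisted form of $G$ over $\B_{k'}G$ comes, via pullback, from a group scheme over $k$; and the universal such candidate is forced by descent to be the abelianization $G/D(G)$.

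First I would prove the easy direction: if $\mathcal G$ is abelian, then $I_{\mathcal G} \to \mathcal G$ is an abelian group scheme, and over the cover $\spec k' \to S$ it becomes the constant (i.e.\ pulled-back) group $G$ over $\B_{k'}G$ with \emph{trivial} conjugation action (conjugation on an abelian group is trivial). Hence $I_{\mathcal G}$ is the pullback of $G$ viewed as a group scheme over $k'$; one then checks that this $k'$-form actually descends to a $k$-group scheme, using that the descent datum on $I_{\mathcal G}$ over $\mathcal G \times_S \mathcal G$ becomes, over $\spec(k'\otimes_k k')$, the canonical descent datum (there is no conjugation twisting left), so it is effective by fpqc descent for affine schemes. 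Thus $I_{\mathcal G}$ descends along $\phi$.

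For the converse, suppose $I_{\mathcal G}$ descends along $\phi$ to a group scheme $\mathcal H/k$, so $I_{\mathcal G} \simeq \phi^* \mathcal H$ as group schemes over $\mathcal G$. Pulling back along a section $\spec k' \to \mathcal G$ of the neutralized gerbe gives $G \simeq \mathcal H \times_k k'$ as $k'$-group schemes. But the two projections $\spec(k' \otimes_k k') \to \spec k'$ give two such sections differing by an element of $G(k'\otimes_k k')$, and the induced two identifications of $G_{k'\otimes_k k'}$ with $(\mathcal H)_{k'\otimes_k k'}$ differ by conjugation by that element; since $\mathcal H$ is pulled back from $k$, these two identifications agree, forcing conjugation by an arbitrary ``universal'' element of $G$ to be trivial. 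Concretely, writing everything in Hopf algebras, this says the commutator morphisms $G^{2n} \to G$ all factor through the unit, i.e.\ the ideals $I_n$ of Definition \ref{def:derived_subgroup} are all zero, hence $D(G)$ is trivial and $G$ is abelian; by Lemma \ref{lem:derived_subgroup_base_change} (or simply because abelianness is detected after faithfully flat base change) this descends to the statement that $\mathcal G$ is abelian.

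The main obstacle is bookkeeping the conjugation twist correctly: the inertia stack of a neutral gerbe $\B_{k'}G$ is $G$ with its \emph{conjugation} action, and the content of the proposition is exactly that descent of this twisted-adjoint form to the base kills the twist. I expect the cleanest way to handle this is to work with the two pullbacks along $\spec(k' \otimes_k k') \rightrightarrows \spec k'$ and track how the isomorphism class of the inertia group scheme, together with its descent datum, encodes precisely the conjugation action — so that triviality of the descent datum's ``twist'' is equivalent to triviality of all commutators, which is Definition \ref{def:derived_subgroup} read backwards.
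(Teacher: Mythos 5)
Your proposal is correct in substance and follows essentially the same route as the paper: both directions reduce to the observation that the obstruction to descending $I_{\mathcal G}$ along $\phi$ is exactly the conjugation action of automorphisms of objects on their own automorphism groups, which is trivial precisely when the gerbe is abelian (the paper packages the forward direction as the fact that the center $\Z(I_{\mathcal G})$ always descends canonically). One minor slip in your converse: the two pullbacks of $\xi$ to $\spec(k'\otimes_k k')$ are only \emph{locally} isomorphic, so they need not ``differ by an element of $G(k'\otimes_k k')$''; this is harmless, since it suffices (as the paper does) to apply the naturality of $I_{\mathcal G}\simeq \phi^*\mathcal H$ to the automorphisms of a single object $x'\in\mathcal G(k')$, which immediately forces the conjugacy action to be trivial — the detour through the ideals $I_n$ and Lemma \ref{lem:derived_subgroup_base_change} is then unnecessary.
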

The last assertion means that there exists a $k$-affine group scheme $G$ and an isomorphism \( \mathcal{G} \times_{S} G \simeq I_{\mathcal G} \) of group schemes over \( \mathcal{G} \). We don't assume a priori that $G$ is abelian.

\begin{proof}
	For the ``only if '' part: it is straightforward to check, more generally, that the center $\Z(I_{\mathcal G})$ of the inertia stack always descends along $\phi:\mathcal{G}\longrightarrow S$. Indeed, if $\xi$ and $\nu$ are two objects of $\mathcal G(S)$, $h:\xi \longrightarrow\xi$ is in $\Z(\Aut(\xi))$, and $v:\nu\longrightarrow \xi$ is an isomorphism, then $v^{-1}hv:\nu \longrightarrow \nu$ is in $\Z(\Aut(\nu))$ and is independent of $v$. Since such a $v$ exists locally, we get a canonical isomorphism $\Z(\BAut(\xi))\,\simeq\,\Z(\BAut(\nu))$. For the ``if'' part: assume that $I_\mathcal G$ descends along $\phi:\mathcal{G}\longrightarrow S$ into a $k$-group scheme $G$. Thus for any extension $k'/k$ and any object $x'\in \mathcal G(k')$, there is an automorphism $G_{k'}\simeq \Aut_{k'}(x')$, natural in $x'$. This implies that the conjugacy action of $\Aut_{k'}(x')$ on itself is trivial, hence the result.
\end{proof}

\subsection{Abelianization of affine gerbes}
\label{sub:abelianization_of_affine_gerbes}

Let $ \mathcal{G}/k$ be an affine gerbe. Since the inertia stack $I_{ \mathcal{G}}$ is defined over $\mathcal G$, and not over a field, some descent argument is needed to show that the derived subgroup $D(I_{ \mathcal{G}})$ makes sense.

We proceed as follows : let $k'/k$ be an extension, and $\xi$ an object of $\mathcal G(k')$. Denote $U=\spec k'$ and $G'\,=\,\Aut_{k'}(\xi)$. We want to show that $D(G')$ descends along $U\xrightarrow{\xi} \mathcal{G}$. Let $R=U\times_{ \mathcal{G}}U$, this amounts to show the following lemma.

\begin{lemma}
\label{lem:derived_inertia_stack}
With the notations as above, the canonical isomorphism $\phi:\pr_1^*G' \simeq \pr_2^*G'$ sends $\pr_1^* D(G')$ to $\pr_2^* D(G')$.
\end{lemma}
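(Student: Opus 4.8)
The plan is to reduce everything to the statement of Lemma \ref{lem:derived_subgroup_base_change}, i.e.\ to the fact that the formation of the ideal $\cap_n I_n$ commutes with base change along $k$-algebras. First I would unwind the set-up: $R = U\times_{\mathcal G} U = \spec A$ for some $k$-algebra $A$, with two structure morphisms $a_1,a_2:k'\longrightarrow A$ coming from the two projections $\pr_1,\pr_2:R\longrightarrow U$. The group scheme $\pr_j^*G'$ over $R$ is obtained from $G' = \Aut_{k'}(\xi)$ by base change along $a_j$; if $H'$ denotes the Hopf algebra of $G'$ over $k'$, then $\pr_j^*G'$ has Hopf algebra $H'\otimes_{k',a_j}A$. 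The key observation is that $H'$ itself arises by base change from the Hopf algebra $H$ of $\Aut(\xi)$ viewed appropriately, but more to the point, by Lemma \ref{lem:derived_subgroup_base_change} applied to the $k'$-algebra $A$ (via each $a_j$), the defining ideal of $D(\pr_j^*G')$ in $H'\otimes_{k',a_j} A$ is exactly $\bigl(\cap_n I'_n\bigr)\otimes_{k',a_j} A$, where $I'_n\subset H'$ is the commutator ideal for $G'$ over $k'$. In other words, $D(\pr_j^*G') = \pr_j^* D(G')$ as closed subschemes — the derived subgroup commutes with this base change.

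Next I would use the naturality of the commutator construction. The canonical isomorphism $\phi:\pr_1^*G'\simeq \pr_2^*G'$ over $R$ is an isomorphism of group schemes, so it carries the commutator morphism $(\pr_1^*G')^{2n}\longrightarrow \pr_1^*G'$ to the commutator morphism $(\pr_2^*G')^{2n}\longrightarrow \pr_2^*G'$; dually, on Hopf algebras $\phi^\sharp$ carries the kernel of the $n$-th iterated commutator map for $\pr_2^*G'$ to that for $\pr_1^*G'$, hence $\phi^\sharp$ identifies the two ideals $\cap_n$, and therefore $\phi$ sends $D(\pr_1^*G')$ isomorphically onto $D(\pr_2^*G')$. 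Combining this with the previous paragraph — $D(\pr_j^*G') = \pr_j^*D(G')$ — gives exactly the assertion that $\phi$ sends $\pr_1^*D(G')$ to $\pr_2^*D(G')$.

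I expect the only genuine subtlety to be bookkeeping: making sure that ``commutes with base change'' is invoked along the correct ring maps, since $A$ is a $k'$-algebra in two different ways (via $a_1$ and via $a_2$), and that $\phi$ is indeed a morphism of group schemes and not merely of schemes, so that naturality of the commutator map applies. Once those points are handled, the proof is essentially the observation that $D(-)$ is a functorial construction stable under base change, so it commutes with descent data; no further input is needed. A remark worth inserting at the end is that this lemma legitimizes speaking of $D(I_{\mathcal G})$ as a closed normal subgroup scheme of $I_{\mathcal G}$ over $\mathcal G$, and hence of the abelianization $I_{\mathcal G}/D(I_{\mathcal G})$, which is what the subsequent development of the appendix requires.
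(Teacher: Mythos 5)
Your argument is correct and follows essentially the same route as the paper's: both reduce the claim to the facts that $\phi$ is a morphism of group schemes (hence compatible with the commutator maps, so its Hopf-algebra counterpart $\phi_\#$ carries $I_n\otimes_{k',i_2}A$ into $I_n\otimes_{k',i_1}A$) and that Lemma \ref{lem:derived_subgroup_base_change} identifies $\bigl(\cap_n I_n\bigr)\otimes A$ with $\cap_n\bigl(I_n\otimes A\bigr)$ along each of the two structure maps $k'\to A$. The one detail you flag but leave open --- why $\phi$ is a group morphism --- is settled in the paper by observing that $\phi$ is conjugation $c_\gamma$ by the canonical isomorphism $\gamma:\pr_1^*\xi\simeq\pr_2^*\xi$ over $R$.
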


\begin{proof}
Let $\gamma$ the canonical isomorphism $\gamma:\pr_1^*x \simeq \pr_2^*x$ on $R$. Then $\phi$ identifies with the morphism of conjugation by $\gamma$ :
\[ c_\gamma : \Aut_R (\pr_1^*x)\longrightarrow \Aut_R (\pr_2^*x)\;\; . \] 
In particular, $\phi$ is a group morphism, hence $\phi$ commutes with the two pullbacks of the commutator map $\pr_i^*\left(G'^{2n}\longrightarrow G'\right )$ for $i=1,2$, in the obvious sense.

Let $H'$ be the Hopf algebra of $G'$, let $A=\mathcal O_R (R)$, endowed with the two ring morphisms $i_1,i_2 : k'\longrightarrow A$ corresponding respectively to the two projections $\pr_1,\pr_2:R\longrightarrow U$. Then $\phi$ translates into a ring morphism $\phi_\# : H'\otimes_{k', i_2} A \longrightarrow H'\otimes_{k', i_1} A $. Since, as we have just seen, $\phi$ is compatible with the commutators maps, the morphism $\phi_\#$ sends $I_n\otimes_{k', i_2} A$ to $I_n\otimes_{k', i_1} A$. According to Lemma \ref{lem:derived_subgroup_base_change}, $\phi_\#$ sends then $\left(\cap_{n\in \mathbb N^*}I_n\right) \otimes_{k', i_2} A$ to $\left(\cap_{n\in \mathbb N^*}I_n\right)\otimes_{k', i_1} A$ which concludes the proof.
\end{proof}

Thus the subgroup $D(I_{ \mathcal{G}})$ makes sense, and the same Lemma \ref{lem:derived_subgroup_base_change} shows that it is independent of the choice of the element $\xi$ in $\mathcal{G}(k')$. The abelianization of $ \mathcal{G}$ will be the rigidification $ \mathcal{G}\thickslash D(I_{ \mathcal{G}})$. 

More generally, if $\mathfrak X$ is any stack over $S$, and $G< I_{\mathfrak X}$ is a subgroup of its inertia group, we say that a stack morphism $\mathfrak X\longrightarrow \mathfrak Y$ is trivial on $G$ if the composite morphism of $\mathfrak X$-group schemes $G\longrightarrow I_{\mathfrak X}\longrightarrow\mathfrak X \times_{\mathfrak Y} I_{\mathfrak Y}$ is trivial. Naturally, a rigidification along $G$ is a morphism $\mathfrak X\longrightarrow \mathfrak X \thickslash G$ trivial on $G$ and universal for this property.

Usually, rigidification of \emph{fppf} stacks is conducted in two steps : one first mod out the morphisms by $G$, and then stackify the resulting prestack (see for instance \cite[Appendix A]{aov:tame_stacks} for algebraic stacks). In our situation, more care is needed : we are forced to consider the \emph{fpqc} topology for the needs of Tannaka theory, or simply because our gerbes may not be of finite type, but on the other hand stackification (even sheafification) is well-known to involve set-theoretic issues with this topology. But there is a workaround : one can work explicitly with groupoids, since these admit an explicit stackification process via torsors.

\begin{proposition}
\label{prop:abelianization_gerbe}
Let $ \mathcal{G}/S$ be an affine gerbe. Then the rigidification $ \mathcal{G}\longrightarrow \mathcal{G}\thickslash D(I_{ \mathcal{G}})$ exists, and is an universal morphism to an affine abelian gerbe.
\end{proposition}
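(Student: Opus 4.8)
The plan is to construct $\mathcal{G}\thickslash D(I_{\mathcal G})$ explicitly at the level of groupoids presenting the gerbe, since the \emph{fpqc} stackification is problematic, as noted just before the statement. First I would pick an extension $k'/k$ and an object $\xi\in\mathcal G(k')$, set $U=\spec k'$ and $R=U\times_{\mathcal G}U$, so that the groupoid $R\rightrightarrows U$ presents $\mathcal G$. Writing $G'=\Aut_{k'}(\xi)$, the inertia is presented by $G'\times U$ with its conjugation-type descent datum, and Lemma \ref{lem:derived_inertia_stack} shows $D(G')\subset G'$ is preserved by that descent datum, so $D(I_{\mathcal G})$ is a genuine normal subgroup-stack of $I_{\mathcal G}$. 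Now form the quotient groupoid: replace $R$ by $R' = R/(D(G')\text{-action})$ — more precisely, $D(G')$ acts on $R$ via the band action (an element of $R$ over $(u_1,u_2)$ is an isomorphism $\xi_{u_1}\to\xi_{u_2}$, and $D(G')$ acts by post-composition, which by Lemma \ref{lem:derived_inertia_stack} is the same, up to the descent isomorphism, as pre-composition), and the quotient $R'\rightrightarrows U$ is again a groupoid because $D(G')$ is normal in $G'$. Its associated fibered category (via torsors for the groupoid, which is the explicit stackification that avoids set-theoretic issues) is the candidate $\mathcal H := \mathcal{G}\thickslash D(I_{\mathcal G})$.

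Next I would verify the required properties. That $\mathcal H$ is an affine gerbe: it is fpqc-locally nonempty and connected since $\mathcal G$ is, and the automorphism group of $\xi$ viewed in $\mathcal H$ is $G'/D(G')$, which is affine by Chevalley's theorem (as recalled after Definition \ref{def:derived_subgroup}); affineness of the diagonal and existence of an affine chart are inherited from $\mathcal G$ because $G'\to G'/D(G')$ is faithfully flat affine. That $\mathcal H$ is abelian: its inertia is presented by $(G'/D(G'))\times U$, and $G'/D(G')$ is abelian, again by the discussion following Definition \ref{def:derived_subgroup}. That $\mathcal G\to\mathcal H$ is trivial on $D(I_{\mathcal G})$ is immediate from the construction. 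For the universal property: given any morphism $\mathcal G\to\mathcal K$ to an affine gerbe (or more generally a stack) that is trivial on $D(I_{\mathcal G})$, I pull back along $\xi$ to get a $k'$-point of $\mathcal K$ with automorphism group $L'$ and a group morphism $G'\to L'$ whose composite with $D(G')\hookrightarrow G'$ is trivial (that is exactly the triviality-on-$D(I_{\mathcal G})$ condition spelled out at the level of groupoids); hence it factors uniquely through $G'/D(G')$ by the universal property of abelianization of affine group schemes, and this factorization is compatible with the descent data by Lemma \ref{lem:derived_subgroup_base_change}, so it glues to a morphism $\mathcal H\to\mathcal K$, unique because $\mathcal G\to\mathcal H$ is an epimorphism of stacks.

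The main obstacle I expect is not any single algebraic identity but making the groupoid-level quotient construction rigorous: one must check that $R' = R/D(G')$ genuinely carries a groupoid structure over $U$ (composition descends because $D(G')$ is normal and the descent isomorphism $\phi$ is a group morphism, by the proof of Lemma \ref{lem:derived_inertia_stack}), that its torsor-stackification is an fpqc stack with the expected fibers, and that this stackification really is a rigidification in the sense defined just above the statement — i.e.\ that being trivial on $D(I_{\mathcal G})$ for a morphism out of $\mathcal G$ is detected on the groupoid presentation. All of this is routine but somewhat delicate bookkeeping with the \emph{fpqc} topology, which is precisely why the paper flags the set-theoretic subtleties and the ``workaround'' via explicit groupoids; I would organize the argument so that every descent statement is reduced to Lemma \ref{lem:derived_subgroup_base_change} and Lemma \ref{lem:derived_inertia_stack}, and every representability/affineness statement is reduced to Chevalley's theorem applied to $G'$.
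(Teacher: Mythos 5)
Your proposal is correct and follows essentially the same route as the paper: present $\mathcal G$ by the groupoid $R\rightrightarrows U$ attached to an object $\xi\in\mathcal G(k')$, quotient $R$ by the $D(G')$-action (the paper writes the two-sided quotient $\pr_1^*D(G')\backslash R\slash \pr_2^*D(G')$ and then observes it collapses to the one-sided one you use), take the stack of torsors for the resulting groupoid, and deduce affineness from Chevalley's theorem applied to $G'/D(G')$. The only difference is one of emphasis --- you spell out the universal property in more detail, while the paper dismisses it as clear by construction --- so there is nothing substantive to add.
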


\begin{remark}
\label{rem:abelianization-gerbe}\mbox{}
\begin{enumerate}
	\item 
		The existence of the abelianization of an affine gerbe follows from the general machinery developed in \cite{bv:fundamental_gerbes}, but since this is not really required, we prefer to work it out explicitly. An even more direct approach is given in \cite{tz:alg-nori-fund-gerbes} Definition B.9 (for the largest pro-\'etale and pro-local quotient of an affine gerbe, by this works with the largest abelian quotient as well). However, it relies heavily on Tannaka duality. 

\item To keep notations compact, we will write $ \mathcal{G}^{ab}$ for $\mathcal{G}\thickslash D(I_{ \mathcal{G}})$.
\end{enumerate}

\end{remark}

\begin{proof}[{Proof of Proposition \ref{prop:abelianization_gerbe}}]
	
We switch to the point of view of (non empty, transitive) groupoids : let $k'/k$ be an extension, and $\xi$ an object of $\mathcal G(k')$. As before, we denote $U=\spec k'$, $R=U\times_{ \mathcal{G}}U$. We can recover $\mathcal G$ as the stack of $R\rightrightarrows U$-torsors (see \cite[Proposition 2.12]{breen:tannaka}). Let $G'=\Aut_{k'}(\xi)$, then $R$ is naturally a $(\pr_1^*G',\pr_2^*G')$-bitorsor over $U\times_{k}U$. Set : 
$$\widetilde R \,=\, \pr_1^*D(G')\backslash R \slash \pr_2^*D(G')$$

From this presentation, it is easy to check that the composition law of $R$ descends to $\widetilde R$, in other words we get in new groupoid $\widetilde{R}\rightrightarrows U$. It is clear that it is again non empty and transitive, so that the stack of $\widetilde{R}\rightrightarrows U$-torsors is a gerbe, that we will of course denote by $\mathcal{G}\thickslash D(I_{ \mathcal{G}})$. The fact that $\mathcal{G}\thickslash D(I_{ \mathcal{G}})$ is abelian and satisfies the required universal property is clear by construction. The only thing that remains to be checked is that $\mathcal{G}\thickslash D(I_{ \mathcal{G}})$ is an affine gerbe. From the fact that left and right actions on $R$ commute, we get that $\widetilde R = \pr_1^*D(G')\backslash R$. So ${\widetilde R}\longrightarrow U\times_{k}U$ is a $\pr_1^* G'/\pr_1^*D(G')$-torsor. Since $G'/D(G')$ is affine by Chevalley's theorem, and $U\times_{k}U$ is affine as well, wet get that $\widetilde R$ is affine, so $\mathcal{G}\thickslash D(I_{ \mathcal{G}})$ is an affine gerbe. 
\end{proof}

\subsection{Basic Tannakian categories}
\label{sub:basic_tannakian_categories}

It turns out that Proposition \ref{prop:car_ab_gerbe} enables to give a Tannakian characterization of Tannakian categories whose holonomy gerbe is abelian.

To find a substitute for the inertia stack, we have to follow Deligne's idea (see \cite[\S 5]{del:group_fond}) that you can do algebraic geometry within any Tannakian category. The only definition we will need, in fact, is the following.

\begin{definition}
\label{def:group_within_tannaka}
An affine group scheme $\pi$ within a Tannakian category $\mathcal C$ is (the spectrum of) an ind Hopf algebra in $\mathcal C$.
\end{definition}

It is clear that this definition is functorial in $\mathcal C$ in the obvious sense: if $F:\mathcal C\longrightarrow \mathcal{C'}$ is a symmetric tensor functor between Tannakian categories, and $\pi$ is an affine group scheme in $\mathcal{C}$, then $\mathcal{\pi'}=F(\mathcal{\pi})$ is an affine group scheme in $\mathcal{C'}$. The main example is the following.

\begin{definition}[\cite{del:group_fond},\S 6]
\label{def:fund_group_tannaka}

The fundamental group $\pi(\mathcal C)$ of a Tannakian category $\mathcal C$ is the affine group scheme in \( \mathcal{C} \) verifying for any fiber functor $\omega :\mathcal C\longrightarrow \Vect k'$ over an arbitrary extension $k'/k$:
\[ \omega (\pi(\mathcal C)) \simeq \Aut^{\otimes}(\omega)\] 
in a functorial way.
\end{definition}
In others words, $\omega (\pi(\mathcal C))$ is the fundamental group of $\mathcal C$ based at $\omega$.

\begin{proposition}
\label{prop:corr_group_schemes}
Let $\mathcal C$ be a Tannakian category, and $\mathcal G$ the gerbe of its fiber functors.
\begin{enumerate}
	\item There is a natural equivalence between affine group schemes within $\mathcal C$ and affine group schemes over $\mathcal G$.
	\item Under this equivalence, the fundamental group $\pi(\mathcal C)$ is sent to the inertia stack $I_{\mathcal G}$.
\end{enumerate}
\end{proposition}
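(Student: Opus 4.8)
The plan is to derive both assertions from Tannaka duality in its gerbe formulation. I would first recall (\cite{rivano:cat_tan,del:group_fond}) that the gerbe $\mathcal G$ of fiber functors of $\mathcal C$ comes with a canonical symmetric monoidal equivalence $\mathcal C\simeq \Vect(\mathcal G)$, which upon passing to ind-objects gives a symmetric monoidal (cocontinuous) equivalence $\Ind(\mathcal C)\simeq \Ind(\Vect(\mathcal G))=:\mathrm{QCoh}(\mathcal G)$; concretely, for an extension $k'/k$ and $\xi\in\mathcal G(k')$, the fiber functor $\omega_\xi$ is identified with pullback $p_\xi^*$ along the point $p_\xi\colon \spec k'\to\mathcal G$ determined by $\xi$. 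For part (1): by Definition \ref{def:group_within_tannaka} an affine group scheme within $\mathcal C$ is the spectrum of a commutative Hopf-algebra object of $\Ind(\mathcal C)$, whereas an affine group scheme over $\mathcal G$ is, by unwinding the relative $\Spec$, the same as a commutative Hopf-algebra object of $\mathrm{QCoh}(\mathcal G)$. A symmetric monoidal equivalence carries commutative-algebra objects to commutative-algebra objects and, keeping track of comultiplication, counit and antipode, commutative-Hopf-algebra objects to commutative-Hopf-algebra objects, functorially; applying this to $\Ind(\mathcal C)\simeq \mathrm{QCoh}(\mathcal G)$ yields the claimed natural equivalence. Moreover, reading off $\Spec$ one gets the bridge identity: if $\pi$ within $\mathcal C$ corresponds to $\mathcal P\to\mathcal G$, then $\omega_\xi(\pi)\simeq p_\xi^*\mathcal P$ as affine group schemes over $k'$, naturally in $\xi$.

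For part (2), let $\mathcal P\to\mathcal G$ be the affine group scheme over $\mathcal G$ corresponding to $\pi(\mathcal C)$. On one hand, Definition \ref{def:fund_group_tannaka} together with the bridge identity gives $p_\xi^*\mathcal P\simeq \omega_\xi(\pi(\mathcal C))\simeq \Aut^\otimes(\omega_\xi)$, which is tautologically $\Aut_{k'}(\xi)$ since $\mathcal G$ is the gerbe of fiber functors; on the other hand, straight from $I_{\mathcal G}=\mathcal G\times_{\mathcal G\times_S\mathcal G}\mathcal G$ one has $p_\xi^*I_{\mathcal G}\simeq \Aut_{k'}(\xi)$ as well. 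To upgrade these fiberwise identifications to an isomorphism $\mathcal P\simeq I_{\mathcal G}$ of group schemes over $\mathcal G$, I would pick $\xi\in\mathcal G(k')$ with $U=\spec k'\xrightarrow{\xi}\mathcal G$ an fpqc cover (possible since $\mathcal G$ has an affine chart), set $R=U\times_{\mathcal G}U=\spec A$, and write $\gamma\colon \pr_1^*\xi\simeq \pr_2^*\xi$ for the tautological isomorphism. Both $\mathcal P$ and $I_{\mathcal G}$ arise by fpqc descent from $\Aut_{k'}(\xi)$ along $R\rightrightarrows U$; for $I_{\mathcal G}$ the descent datum $\pr_1^*\Aut_{k'}(\xi)\simeq \pr_2^*\Aut_{k'}(\xi)$ is conjugation $c_\gamma$ by $\gamma$, exactly as in the proof of Lemma \ref{lem:derived_inertia_stack}. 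For $\mathcal P$, the descent datum is, via the bridge identity, the map on $\omega(\pi(\mathcal C))$ induced by the tensor isomorphism $\omega_{\pr_1^*\xi}\simeq \omega_{\pr_2^*\xi}$ attached to $\gamma$; since the identification $\omega(\pi(\mathcal C))\simeq \Aut^\otimes(\omega)$ is natural in $\omega$ and a tensor isomorphism $\alpha\colon\omega\simeq\omega'$ acts on automorphism group schemes by $\beta\mapsto \alpha\beta\alpha^{-1}$, this descent datum is again conjugation by $\gamma$. The two descent data coincide, so $\mathcal P\simeq I_{\mathcal G}$, which is assertion (2).

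The Hopf-algebra bookkeeping under a monoidal equivalence is routine, as is the translation of Deligne's definition of $\pi(\mathcal C)$ through the equivalence $\mathcal C\simeq\Vect(\mathcal G)$. The genuinely content-bearing step, and the one I expect to be the main obstacle, is the descent-datum comparison at the end of part (2): one must verify that the descent datum intrinsically carried by $\pi(\mathcal C)$ — encoding how the canonical $\Aut^\otimes(\omega)$-action on $\omega(\pi(\mathcal C))$ transforms along a tensor isomorphism of fiber functors — is \emph{literally} conjugation by the tautological $\gamma$, hence matches the tautological descent datum defining the inertia stack. Once this compatibility is nailed down, everything else is a matter of chasing definitions through Tannaka duality.
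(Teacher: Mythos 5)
Your proof is correct, and part (1) follows essentially the same path as the paper: both reduce an affine group scheme over $\mathcal G$ to a Hopf algebra object in quasi-coherent sheaves and then invoke the Tannakian equivalence $\mathcal C\simeq \Vect(\mathcal G)$ passed to ind-objects. One small presentational caveat: you write $\Ind(\Vect(\mathcal G))=:\mathrm{QCoh}(\mathcal G)$ as if by definition, whereas the paper makes explicit the point that justifies this identification when $\mathrm{QCoh}$ is taken in its geometric sense, namely that every quasi-coherent sheaf of finite type on an affine gerbe is locally free (checked on a chart $\Vect k'\to\mathcal G$); you should say this rather than legislate it. For part (2) your route genuinely diverges from the paper's. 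The paper argues top-down: it identifies $I_{\mathcal G}=\mathcal G\times_{\mathcal G\times_S\mathcal G}\mathcal G$ with $\Aut_{\mathcal G}(\id_{\mathcal G})$, translates this under the equivalence of part (1) into $\Aut^\otimes_{\mathcal C}(\id_{\mathcal C})$, and observes that the latter manifestly satisfies the universal property of Definition \ref{def:fund_group_tannaka}, so it \emph{is} $\pi(\mathcal C)$ --- no chart or descent computation appears. You instead argue bottom-up: fiberwise identification $p_\xi^*\mathcal P\simeq\Aut_{k'}(\xi)\simeq p_\xi^*I_{\mathcal G}$ followed by a comparison of descent data along $R\rightrightarrows U$, where the crux is that the functoriality of $\omega(\pi(\mathcal C))\simeq\Aut^\otimes(\omega)$ forces the descent datum of $\mathcal P$ to be conjugation by the tautological $\gamma$, matching that of $I_{\mathcal G}$. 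Your identification of that compatibility as the content-bearing step is accurate, and your argument for it (a tensor isomorphism of fiber functors acts on $\Aut^\otimes$ by conjugation, and Deligne's functoriality makes the relevant square commute) is sound; it is in effect an unwinding of what the paper's ``clearly has the universal property'' conceals, and it harmonizes nicely with the conjugation computation already carried out in Lemma \ref{lem:derived_inertia_stack}. The paper's version is shorter and chart-free; yours makes the descent mechanism explicit, which is arguably more transparent to a reader who has not internalized the $2$-categorical form of Tannaka duality.
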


\begin{proof}
\begin{enumerate}
	\item See \cite[\S 5.8]{del:group_fond} for the neutral case. In the non neutral situation, to choose an affine group scheme over $\mathcal G$ amounts to choose a quasi-coherent Hopf algebra $A$ over $\mathcal G$. But any quasi-coherent sheaf over $\mathcal G$ is locally free (because this is true in any chart $\Vect k'\longrightarrow \mathcal G$). So the data of $A$ boils down to the data of an Hopf algebra in the category of ind objects $\Ind \mathcal{C} $. 
	\item By definition 
		$$I_\mathcal{G}= \mathcal{G} \times_{\mathcal{G}\times_S \mathcal{G}}\mathcal{G}=\Aut_\mathcal{G}(\mathcal{G}\xrightarrow{\id}\mathcal{G})\;\;$$
		By Tannaka duality and the equivalence of the first point, this translates in $\Aut^\otimes_\mathcal{C}(\mathcal{C}\xrightarrow{\id}\mathcal{C})$, which clearly has the universal property required by Definition \ref{def:fund_group_tannaka}.
\end{enumerate}
\end{proof}

\begin{definition}
\label{def:struct_tannaka}
Let \( \mathcal{C} \) be a Tannakian category, and $\mathds 1$ its neutral object. The structure morphism of \( \mathcal{C} \) is the functor 
$$\cdot \otimes_k \mathds 1 : \Vect k\longrightarrow \mathcal C$$
given by $ V \longmapsto V\otimes_k \mathds 1$.
\end{definition}

One can now translate Proposition \ref{prop:car_ab_gerbe}:
\begin{definition}
\label{def:basic_tannaka}
A Tannakian category \( \mathcal{C} \) is called \emph{basic} if there exists an affine group scheme $G$ over $k$ such that $G\otimes_k \mathds 1 \simeq \pi(\mathcal C)$ as affine group schemes within $\mathcal C$.
\end{definition}

\begin{proposition}
	\label{prop:car_basic_tannakian}
Let $\mathcal C$ be a Tannakian category, and $\mathcal G$ the gerbe of its fiber functors. Then $\mathcal C$ is basic if and only if $\mathcal G$ is abelian.
\end{proposition}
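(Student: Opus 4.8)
The plan is to reduce the statement directly to Proposition \ref{prop:car_ab_gerbe} via the dictionary set up in Proposition \ref{prop:corr_group_schemes}. Unwinding the definitions: $\mathcal C$ is basic iff there is a $k$-affine group scheme $G$ and an isomorphism $G\otimes_k\mathds 1\simeq\pi(\mathcal C)$ of affine group schemes \emph{within} $\mathcal C$, while $\mathcal G$ is abelian iff (by Proposition \ref{prop:car_ab_gerbe}) there is a $k$-affine group scheme $G'$ and an isomorphism $\mathcal G\times_S G'\simeq I_{\mathcal G}$ of group schemes \emph{over} $\mathcal G$. So the whole content is that these two data correspond under the equivalence of Proposition \ref{prop:corr_group_schemes}(1) between affine group schemes within $\mathcal C$ and affine group schemes over $\mathcal G$, together with the identification in part (2) of that proposition sending $\pi(\mathcal C)$ to $I_{\mathcal G}$.

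The key steps, in order, are as follows. First, I would recall that the equivalence of Proposition \ref{prop:corr_group_schemes}(1) is a tensor equivalence $\Ind\mathcal C\simeq \Vect_{\mathrm{qc}}(\mathcal G)$ (quasi-coherent sheaves on $\mathcal G$, all of which are locally free), and it sends Hopf algebra objects to Hopf algebra objects, hence affine group schemes within $\mathcal C$ to affine group schemes over $\mathcal G$. Second, under this equivalence, $\pi(\mathcal C)$ goes to $I_{\mathcal G}$ by part (2). Third — and this is the point that needs a word of justification — I must check that the structure functor $\cdot\otimes_k\mathds 1:\Vect k\longrightarrow\mathcal C$ corresponds, after applying the equivalence, to pullback of quasi-coherent sheaves along the structural morphism $\phi:\mathcal G\longrightarrow S=\spec k$; equivalently, that a group scheme within $\mathcal C$ of the form $G\otimes_k\mathds 1$ corresponds exactly to a group scheme over $\mathcal G$ of the form $\mathcal G\times_S G$, i.e. one pulled back from $S$. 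Granting this, an isomorphism $G\otimes_k\mathds 1\simeq\pi(\mathcal C)$ within $\mathcal C$ is the same as an isomorphism $\mathcal G\times_S G\simeq I_{\mathcal G}$ over $\mathcal G$, so $\mathcal C$ basic $\iff$ $I_{\mathcal G}$ descends along $\phi$ as a group scheme $\iff$ (Proposition \ref{prop:car_ab_gerbe}) $\mathcal G$ abelian, and the proof is complete.

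The main obstacle is the third step: making precise that the equivalence $\Ind\mathcal C\simeq\Vect_{\mathrm{qc}}(\mathcal G)$ is compatible with the ``constant'' or ``pulled-back'' objects on the two sides, i.e. that it intertwines $\cdot\otimes_k\mathds 1$ with $\phi^*$. This is really a matter of tracing through Deligne's construction in \cite[\S 5]{del:group_fond}: the fiber functors exhibit $\mathcal C$ as $\Rep$ of the gerbe $\mathcal G$, under which $\mathds 1$ is the structure sheaf $\mathcal O_{\mathcal G}$ and $V\otimes_k\mathds 1$ is the locally constant sheaf $\phi^*\widetilde V$ associated to the vector space $V$; functoriality in $\mathcal C$ (noted after Definition \ref{def:group_within_tannaka}) then guarantees the corresponding compatibility at the level of group schemes. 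Once this identification is in hand the rest is a purely formal manipulation of universal properties, and no computation is involved.
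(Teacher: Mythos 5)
Your proposal is correct and follows exactly the route of the paper, whose own proof is the one-line remark that the statement ``follows from the definitions, Proposition \ref{prop:car_ab_gerbe} and Proposition \ref{prop:corr_group_schemes}''; you merely spell out the (correct) compatibility of the equivalence with the structure functor $\cdot\otimes_k\mathds 1$ and the pullback $\phi^*$, which the paper leaves implicit.
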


\begin{proof}
	This follows from the definitions, Proposition \ref{prop:car_ab_gerbe} and Proposition \ref{prop:corr_group_schemes}.
\end{proof}

\subsection{Basic objects}
\label{sub:basic_objects}

Let \( \mathcal{C} \) be a Tannakian category, and $X \in \Ob \mathcal{C} $. We denote by \( \mathcal{C}_X=<X> \) the sub-Tannakian category generated by $X$ and by $\pi(X)$ its fundamental group in $\mathcal C$.

\begin{definition}
\label{def:basic_object}
The object $X$ of the Tannakian category \( \mathcal{C} \) is \emph{basic} if there exists an affine group scheme $G$ over $k$ such that $G\otimes_k \mathds 1 \simeq \pi(X)$ as affine group schemes within $\mathcal C$.
\end{definition}

We will write $\mathcal G_X$ for the holonomy gerbe of $X$ (that is, for the fundamental gerbe of \( \mathcal{C}_X \) ).

\begin{proposition}
\label{prop:car_basic_objects}
Let \( \mathcal{C} \) be a Tannakian category, and $X \in \Ob \mathcal{C} $. Then $X$ is basic if and only if $\mathcal G_X$ is abelian.
\end{proposition}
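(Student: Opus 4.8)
The plan is to deduce this statement directly from the more general characterization already proved in Proposition \ref{prop:car_basic_tannakian}, applied not to $\mathcal{C}$ itself but to the sub-Tannakian category $\mathcal{C}_X = <X>$ generated by $X$. The key observation is that, by Definition \ref{def:basic_object}, the object $X$ is basic in $\mathcal{C}$ precisely when $\pi(X) \simeq G \otimes_k \mathds 1$ as an affine group scheme within $\mathcal{C}$, and we need to see that this is the same as asking that the category $\mathcal{C}_X$ be basic in the sense of Definition \ref{def:basic_tannaka}.

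First I would recall that $\pi(X)$ is by definition the fundamental group of $\mathcal{C}_X$ inside the ambient category $\mathcal{C}$, that is, the image of $\pi(\mathcal{C}_X)$ under the inclusion tensor functor $\iota : \mathcal{C}_X \longrightarrow \mathcal{C}$; this is the standard behaviour of Deligne's fundamental group under a tensor subcategory (see \cite[\S 6]{del:group_fond}), and the functoriality of the notion of affine group scheme within a Tannakian category noted after Definition \ref{def:group_within_tannaka} applies. Since $\iota$ is fully faithful and carries $\mathds 1$ to $\mathds 1$, it carries the structure morphism of $\mathcal{C}_X$ to (the restriction of) the structure morphism of $\mathcal{C}$; hence an isomorphism $\pi(\mathcal{C}_X) \simeq G \otimes_k \mathds 1$ within $\mathcal{C}_X$ yields, by applying $\iota$, an isomorphism $\pi(X) \simeq G \otimes_k \mathds 1$ within $\mathcal{C}$. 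Conversely, a descent of $\pi(X)$ to a $k$-group scheme $G$ can be pulled back along $\iota$: because $\iota$ is fully faithful, an object of $\mathcal{C}$ of the form $V \otimes_k \mathds 1$ together with its Hopf structure lies in the essential image of $\iota$ and its preimage is again of the form "$k$-group scheme $\otimes_k \mathds 1$". So $X$ is basic in $\mathcal{C}$ if and only if the Tannakian category $\mathcal{C}_X$ is basic.

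Then I would simply invoke Proposition \ref{prop:car_basic_tannakian} with $\mathcal{C}$ replaced by $\mathcal{C}_X$: the gerbe of fiber functors of $\mathcal{C}_X$ is by definition the holonomy gerbe $\mathcal{G}_X$, so $\mathcal{C}_X$ is basic if and only if $\mathcal{G}_X$ is abelian. Combining the two equivalences gives the claim.

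The main obstacle, I expect, is the careful bookkeeping in the first step: one must check that "affine group scheme within a Tannakian category" transports correctly along a fully faithful tensor functor in both directions, in particular that being of the form $G \otimes_k \mathds 1$ is detected after applying $\iota$. This is where the fully faithfulness of $\iota$ and the compatibility with the structure morphism (Definition \ref{def:struct_tannaka}) must be used, together with Proposition \ref{prop:corr_group_schemes} to pass freely between group schemes within the category and group schemes over the gerbe. None of this is deep, but it is the only place where there is something to verify; everything else is a direct appeal to the already-established Proposition \ref{prop:car_basic_tannakian}.
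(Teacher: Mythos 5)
Your proposal is correct and follows the same route as the paper, which simply applies Proposition \ref{prop:car_basic_tannakian} to the sub-Tannakian category $\mathcal{C}_X$ whose gerbe of fiber functors is $\mathcal{G}_X$. The only difference is that you make explicit the (true, and worth noting) bookkeeping step that ``$\pi(X)\simeq G\otimes_k\mathds 1$ within $\mathcal{C}$'' is equivalent to ``$\mathcal{C}_X$ is basic,'' via transport along the fully faithful inclusion $\mathcal{C}_X\hookrightarrow\mathcal{C}$; the paper leaves this implicit.
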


\begin{proof}
	This follows from Proposition \ref{prop:car_basic_tannakian} applied to the holonomy gerbe \( \mathcal{C}_X \).
\end{proof}

\begin{proposition}
\label{prop:basic_objects_tannaka_category}
Let \( \mathcal{C} \) be a Tannakian category. Then the full sub-category \( \mathcal{C}^{bas} \) of its basic objects is Tannakian and basic. Moreover any rigid tensor functor $\mathcal D \longrightarrow \mathcal{C} $ from a basic Tannakian category $\mathcal D$ factors uniquely through \( \mathcal{C}^{bas} \).
\end{proposition}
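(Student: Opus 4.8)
The plan is to realize $\mathcal{C}^{bas}$ as the image in $\mathcal{C}$ of the representations of the abelianization of the gerbe of $\mathcal{C}$, and then deduce all three assertions formally from the universal property of that abelianization (Proposition \ref{prop:abelianization_gerbe}), Tannaka duality, and the characterization of basic objects by their holonomy gerbe (Proposition \ref{prop:car_basic_objects}). Concretely, I would fix the gerbe $\mathcal{G}$ of fiber functors of $\mathcal{C}$, so that $\mathcal{C}\simeq\Rep(\mathcal{G})$, and let $\rho\colon\mathcal{G}\longrightarrow\mathcal{G}^{ab}=\mathcal{G}\thickslash D(I_{\mathcal{G}})$ be its abelianization; by Proposition \ref{prop:abelianization_gerbe} this is an affine abelian gerbe and a universal morphism from $\mathcal{G}$ to an affine abelian gerbe. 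Since $\rho$ is a rigidification, hence a faithfully flat morphism of gerbes, the pullback $\rho^{*}\colon\Rep(\mathcal{G}^{ab})\longrightarrow\Rep(\mathcal{G})=\mathcal{C}$ is an exact, fully faithful tensor functor whose essential image $\mathcal{A}$ is a Tannakian subcategory of $\mathcal{C}$ with $\mathcal{A}\simeq\Rep(\mathcal{G}^{ab})$; by Proposition \ref{prop:car_basic_tannakian}, $\mathcal{A}$ is basic. It then remains to show that $\mathcal{A}$ is precisely the full subcategory $\mathcal{C}^{bas}$ of basic objects, and that $\mathcal{A}$ enjoys the stated universal property.

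For the identification $\mathcal{C}^{bas}=\mathcal{A}$, I would use that, $\mathcal{A}$ being a Tannakian subcategory, one has $X\in\mathcal{A}$ if and only if $\mathcal{C}_{X}=\langle X\rangle\subseteq\mathcal{A}$. If $X\in\mathcal{A}$, then $\mathcal{C}_{X}$ is a Tannakian subcategory of the basic category $\mathcal{A}$, so the morphism $\mathcal{G}^{ab}=\mathcal{G}_{\mathcal{A}}\longrightarrow\mathcal{G}_{X}$ dual to $\mathcal{C}_{X}\hookrightarrow\mathcal{A}$ is faithfully flat, whence $\mathcal{G}_{X}$ is abelian (a faithfully flat quotient of an abelian gerbe) and $X$ is basic by Proposition \ref{prop:car_basic_objects}. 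Conversely, if $X$ is basic then $\mathcal{G}_{X}$ is abelian, so the morphism $u_{X}\colon\mathcal{G}\longrightarrow\mathcal{G}_{X}$ dual to $\mathcal{C}_{X}\hookrightarrow\mathcal{C}$ factors, uniquely, through $\rho$ by the universal property of $\mathcal{G}^{ab}$; dualizing, $\mathcal{C}_{X}\hookrightarrow\mathcal{C}$ factors through $\rho^{*}$, so $\mathcal{C}_{X}\subseteq\mathcal{A}$ and $X\in\mathcal{A}$. Hence $\mathcal{C}^{bas}=\mathcal{A}$, which is Tannakian and basic.

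Finally, for the universal property, given a rigid tensor functor $F\colon\mathcal{D}\longrightarrow\mathcal{C}$ with $\mathcal{D}$ basic, Tannaka duality turns $F$ into a morphism of gerbes $f\colon\mathcal{G}\longrightarrow\mathcal{G}_{\mathcal{D}}$, and $\mathcal{G}_{\mathcal{D}}$ is an affine abelian gerbe by Proposition \ref{prop:car_basic_tannakian}; the universal property of $\rho$ then gives a factorization $f=g\circ\rho$ with $g\colon\mathcal{G}^{ab}\longrightarrow\mathcal{G}_{\mathcal{D}}$ unique up to unique isomorphism. Dualizing $g$ produces the desired factorization $\mathcal{D}\longrightarrow\Rep(\mathcal{G}^{ab})=\mathcal{C}^{bas}\hookrightarrow\mathcal{C}$ of $F$, whose uniqueness follows from that of $g$ together with the fact that $\mathcal{C}^{bas}\hookrightarrow\mathcal{C}$ is a full embedding. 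I expect the main obstacle to lie not in this formal skeleton but in pinning down the underlying dictionary: the equivalence between Tannakian subcategories and faithfully flat morphisms of (possibly non-neutral, possibly non-algebraic) affine gerbes, the compatibility of the formation of holonomy gerbes with duality, and the $2$-categorical uniqueness in the universal property of the abelianization. Once these standard Tannakian facts are in place, the argument is purely formal.
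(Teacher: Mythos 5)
Your proposal is correct and follows the same route as the paper: the paper's entire proof is the one-line remark that the statement is ``a translation of Proposition~\ref{prop:abelianization_gerbe} via the Tannaka correspondence,'' and your argument is exactly that translation, carried out in detail by identifying $\mathcal{C}^{bas}$ with the essential image of $\Rep(\mathcal G^{ab})\to\Rep(\mathcal G)=\mathcal C$ and invoking the universal property of the abelianization together with Propositions~\ref{prop:car_basic_tannakian} and~\ref{prop:car_basic_objects}. The standard Tannakian dictionary you flag at the end (subcategories closed under subquotients versus quotient gerbes, etc.) is indeed all that is needed, and your expansion is a faithful filling-in of the paper's terse proof.
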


\begin{proof}
	This a translation of Proposition \ref{prop:abelianization_gerbe} via the Tannaka correspondence.
\end{proof}

\section{Counter-examples due to David Rydh}

\label{sec:David_Rydh_counter_example}

We reproduce two (families of) counter-examples kindly provided to us by David Rydh that shows that Proposition \ref{prop:alper_crit_torsors_local} fails if we don't assume $G$ to be abelian.

\subsection{First family}
\label{sub:first_family}

Let $X=\spec A$ be an affine scheme of positive characteristic $p$, and $s$ a non invertible regular element of $A$, defining a principal effective Cartier divisor $D$ on $X$. Set $\mathfrak X=\sqrt[p]{D/X}$, and let $\pi:\mathfrak X\to X$ be the canonical morphism. According to Lemma \ref{lem:kum-quot} we know that $\mathfrak X \simeq [Z/\mu_p]$, where $Z=\spec\left(\frac{ A[t]}{t^p-s}\right)$. In particular, there is a canonical morphism $\mathfrak X \xrightarrow{Z} \B\mu_p$.

Let $(\mathcal L,\tau)$ be the canonical $p$-th root of $(\mathcal O_X(D),s_D)$ on $\mathfrak X$, where $s_D$ is the section defined by $s\in A$. Let $L=\Spec\left(\Sym_{\mathfrak X} \mathcal L\right)$ be the corresponding line bundle. Since $D$ is defined by a global section of $\mathcal O_X$,  there is a canonical isomorphism $\mathcal L^{\otimes p}\simeq \pi^*\mathcal O_X(D) \simeq \mathcal O_{\mathfrak X}$, and $Z\simeq\Spec\left(\frac{\Sym_{\mathfrak X} \mathcal L}{\mathcal L^{\otimes p}\simeq \mathcal O_{\mathfrak X}}\right)$ (see \cite[Preuve du Th\'eor\`eme 3.4]{bor:corr}) . Thus we have that $L=Z\wedge^{\mu_p} \mathbb G_a$. Since $L^{-1}$ and not $L$ has a canonical section, it will be more convenient to consider the opposite torsor $\mathfrak X \xrightarrow{Z^{-1}} \B\mu_p$ instead of $Z$. So from now on, the only map $\mathfrak X \to \B\mu_p$ considered (in particular in fiber products) is defined by $Z^{-1}$.

Let $G=\mathbb \alpha_p \rtimes \mu_p$. The stack $\mathcal G=\mathfrak X\times_{\B\mu_p} \B G$ is a gerbe on $\mathfrak X$, neutralized by the morphism $\sigma$ associated to the canonical section $\mu_p \to G$. It is easy to check that $\BAut_{\mathfrak X}(\sigma)\simeq K$, where $K=Z^{-1}\wedge^{\mu_p}\alpha_p $ is the form of $\alpha_p$ that one gets by twisting by $Z^{-1}\rightarrow \mathfrak X$. Thus $\mathcal G \simeq \B_{\mathfrak X}K$ 	and  lifting the structure group of the $\mu_p$-torsor $Z^{-1}\rightarrow \mathfrak X$  along the projection $G\to \mu_p$ amounts to define a $K$-torsor on $\mathfrak X$. By this correspondence, the trivial torsor corresponds to the canonical lifting defined by the section $ \mu_p \to G$.

 The group $K$ fits into an exact sequence of $\mathfrak X$-group schemes : 
 \[ 0 \rightarrow K \rightarrow L^{-1} \xrightarrow{F} \left(L^{-1}\right)^{(p/\mathfrak X)} \rightarrow 0 \] 
where $F$ is the Frobenius morphism of $L^{-1}$ relative to $\mathfrak X$. The isomorphism $\mathcal L^{\otimes p}\simeq \mathcal O_{\mathfrak X}$ defines in turn an isomorphism $\left(L^{-1}\right)^{(p/\mathfrak X)} \simeq \mathbb G_{a,\mathfrak X}$. Thus the long exact sequence associated to the above short exact sequence identifies with :
\[ 0 \rightarrow t\cdot\ker(F:A\to A)      \rightarrow t\cdot A \xrightarrow{F} A \rightarrow \h^1(\mathfrak X, K)    \] 
where $t$ is the canonical section of $L^{-1}$ and $F$ is given by $ta \mapsto (ta)^p=sa^p$.

Let now $x$ be a closed point of $X$ in the support of $D$. Since $X$ is the moduli space of $\mathfrak X$, this also defines a unique closed point of $\mathfrak X$. All the considerations about torsors above apply again if one replaces $\mathfrak X$ by the residual gerbe $\mathcal G_x$ at $x$.

To conclude, let $a\in A$  such that $a$ vanishes at $x$ but $a\notin sA$ (this situation of course happens : take $A=k[u,v]$, $s=u$, $a=v$, $x=(0,0)$). Then $a$ defines a non trivial $K$-torsor over $\mathfrak X$, which remains non trivial over any fppf cover $A'\rightarrow A$. Thus the corresponding $G$-torsor is not isomorphic to the canonical lifting of  $Z^{-1}\rightarrow \mathfrak X$ over any fppf neighbourhood of $X$. But since $a$ vanishes at $x$, both $G$-torsors are isomorphic when restricted to $\mathcal G_x$.

\subsection{Second family}
\label{sub:second_family}

We conclude by a variant which shows that Proposition \ref{prop:alper_crit_torsors_local} also fails for Deligne-Mumford stacks of roots in odd characteristics.

The general setup is the same :  $X=\spec A$ is an affine scheme of positive characteristic $p\neq 2$, $s$ is a regular element of $A$, and $D$ is the corresponding Cartier divisor. Let now $\mathfrak X=\sqrt[2]{D/X}$, and as usual $\pi:\mathfrak X\to X$ be the canonical morphism. 
Again $\mathfrak X \simeq [Z/\mu_2]$, where $Z=\spec\left(\frac{ k[t]}{t^2-s}\right)$. We will use the corresponding morphism $\mathfrak X \xrightarrow{Z} \B\mu_2$ (this time we don't have to worry about the sign of $Z$ which is its own inverse).

Let $G=\mathbb \mu_p \rtimes \mathbb Z/2$, where $\mathbb Z/2$ acts non-trivially. Since $\mathbb Z/2\simeq \mu_2$ canonically, we can consider as before the gerbe $\mathcal G=\mathfrak X\times_{\B\mu_2} \B G$. Once again we have that $\mathcal G\simeq \B_{\mathfrak X}K$, where $K=Z\wedge^{\mathbb Z/2}\mu_p$.

 The group $K$ belongs to the twisted Kummer sequence :

 \[ 0\rightarrow K \rightarrow L^\times \xrightarrow{F}L^\times \rightarrow 0 \]
 where $L^\times=Z\wedge^{\mathbb Z/2}\mathbb G_m$ (since $\mathbb Z/2$ acts by group automorphisms, this $\mathbb G_m$-torsor is actually a $\mathfrak X$-group scheme).

 To compute the global sections, it is enough to notice that, as it happens for any ${\mathbb Z/2}$-torsor, the corresponding form $L^\times$ of $\mathbb G_m$ is the kernel of the norm morphism on the Weil restriction : \( L^\times= \ker( \R_{Z/\mathfrak X} \mathbb G_m \to \mathbb G_m )\). This provides us with the following description :

 \[ \Gamma(L^\times)=\left\{ a+bt, (a,b)\in A / a^2-sb^2=1\right\} \]  
 and the Frobenius is given by $a+bt \mapsto a^p+b^ps^{\frac{p-1}{2}}t$. Considering this, it is easy to use the cobord morphism $ \Gamma(L^\times) \to \h^1(\mathfrak X, K)$ to construct a non-trivial $K$-torsor on $\mathfrak X$ which is trivial on the residual gerbe on a given point $x$  of $D$. For instance assume that $b$ vanishes at $x$ but $b\notin sA$, and let $a$ be a square root of $1+sb^2$ (this occurs for instance when $A=k[[u,v]]$~: take $s=u$, $b=v$ and $x=(0,0)$). Then $a+bt$ defines the required torsor.

\section*{Acknowledgements}

The second author wishes to thank Eric Ahlqvist for correcting our definition of a weight, Philippe Gille for a precious piece of 
advice, Angelo Vistoli for numerous conversations on the subject, and David Rydh for his very enlightening comments on Proposition \ref{prop:alper_crit_torsors_local}, and his permission to include his counter-examples. Both authors are grateful to the referee, especially for sharing his views on Appendix \ref{sec:David_Rydh_counter_example}. This work was supported in part by the Labex CEMPI (ANR-11-LABX-0007-01) and by the International Centre for Theoretical Sciences (ICTS) during a visit for participating in the 
program -- Complex Geometry (Code: *ICTS/Prog-compgeo/2017/03*). The first author is 
partially supported by a J. C. Bose Fellowship.

\end{document}